\tikzset{->-/.style={decoration={markings,mark=at position #1 with {\arrow{>}}},postaction={decorate}}}
\definecolor{red}{rgb}{1,0,0}
 \definecolor{darkgreen}{rgb}{0, .7, 0}
 \definecolor{purple}{rgb}{.7, 0, 1}
\newcommand{\F}{{\mathbb{F}}}
\newcommand{\Z}{{\mathbb{Z}}}
\newcommand{\C}{{\mathcal{C}}}
\def\G{{\mathcal G}}
\def\H{{\mathcal H}}
\tikzset{mynode/.style={draw,circle,fill=black,inner sep=2pt,outer sep=0.5pt}}
\newcommand{\bigcupdot}{\hspace{9pt}\cdot \hspace{-9pt}\bigcup}
\newcommand{\cupdot}{\mathbin{\mathaccent\cdot\cup}}
\newtheorem{theorem}{Theorem}[section]
\newtheorem*{theorem*}{Theorem}
\newtheorem*{lemma*}{Lemma}
\newtheorem{proposition}[theorem]{Proposition}
\newtheorem{lemma}[theorem]{Lemma}
\newtheorem{corollary}[theorem]{Corollary}
\theoremstyle{definition}
\newtheorem{definition}[theorem]{Definition}
\theoremstyle{remark}
\newtheorem{remark}[theorem]{Remark}
\newtheorem{para}[theorem]{}
\begin{document}
\title{Finitely generated normal pro-$\C$ subgroups in  right angled Artin pro-$\C$ groups}
\author{Dessislava H. Kochloukova, Pavel A. Zalesskii}
%\date{}                                           % Activate to display a given date or no date

\maketitle

\begin{abstract} Let $\C$ be a class of finite groups closed for subgroups, quotients groups and extensions. Let $\Gamma$ be a  finite  simplicial graph and $G = G_{\Gamma}$ be the corresponding pro-$\C$ RAAG. We show  that if $N$ is a non-trivial finitely generated, normal, full pro-$\C$ subgroup of  $G$ then $G/ N$ is finite-by-abelian. In the pro-$p$ case we show a criterion for $N$ to be of type $FP_n$ when $G/ N \simeq \mathbb{Z}_p$. Furthermore for $G/ N$  infinite  abelian we  show that $N$ is finitely generated if and only if   every  normal closed  subgroup $ N_0 \triangleleft G$ containing $N$  with $G/ N_0 \simeq \mathbb{Z}_p$ is finitely generated. For $G/ N$  infinite abelian with $N$ weakly discretely embedded in $G$ we show that $N$ is of type $FP_n$ if and only if   every $ N_0 \leq G$ containing $N$ with $G/ N_0 \simeq \mathbb{Z}_p$  is of type $FP_n$.
\end{abstract}

\section{Introduction}

In this paper we study right angled Artin pro-$\C$ groups (pro-$\C$ RAAGs), where $\C$ is the class of groups closed for subgroups, quotients and extensions.
Given a   finite simplicial graph $\Gamma$  the discrete RAAG associated to $\Gamma$ is defined by the presentation $$\langle V(\Gamma) \ | \ [v,w] = 1 \hbox{ if } v,w \hbox{ are adjacent in }\Gamma \rangle.$$  If the same presentation is considered in the category of pro-$\C$ groups we have the definition of a pro-$\C$ RAAG associated to $\Gamma$. We will write $G_{\Gamma}$ for the pro-$\C$ RAAG associated with the simplicial graph $\Gamma$. Obviously $G_{\Gamma}$ is the pro-$\C$ completion of the discrete RAAG associated to $\Gamma$.

Though there is a vast literature on abstract RAAGs,   nothing is known about the class of  pro-$\C$ RAAGs and very little about pro-$p$ RAAGs.
In \cite{Lo} Lorensen showed that for a discrete RAAG $G$ associated to a graph $\Gamma$  the canonical map $G \to \widehat{G}_p \simeq G_{\Gamma}$   into its pro-$p$ completion,
induces an isomophism $H^n(\widehat{G}_p, \mathbb{F}_p) \to H^n(G, \mathbb{F}_p)$ for any $n$.  In \cite{K-W} Kropholler and Wilkes proved that  discrete RAAGs are distinguished by their pro-$p$ completions. Snopce and the second author \cite{SZ} gave characterization of coherent pro-$p$ RAAGs as well as  characterized those that appear as the maximal pro-$p$ Galois group $G_K(p)$ of some field $K$ containing a primitive
$p$-th root of unity.

Recently in \cite{CR-Z} Casal-Ruiz and Zearra  studied finitely generated, full, normal subgroups  in discrete RAAGs and showed they are virtually co-abelian. The methods used in \cite{CR-Z} are geometric and involve actions of discrete groups on trees  and the existence of WPD elements associated to those actions, where the trees come from decompositions of the original group as free amalgamated product. Here we are interested in establishing a similar result in the category of pro-$\C$ groups using the theory of pro-$\C$ groups acting on pro-$\C$ trees (see \cite{Z-M 89b}, \cite{ZM90} and \cite{R}). Though in pro-$\C$ context the Bass-Serre theory does not work in its full generality  and there is no pro-$\C$ version of WPD elements,  we prove in Theorem A a pro-$\C$ version of the Casal-Ruiz and Zearra result.

 A pro-$\C$ subgroup $N$ of a pro-$\C$ RAAG $G_{\Gamma} = G_{\Gamma_1} \times \ldots \times G_{\Gamma_k}$, where each $G_{\Gamma_i}$ cannot be further decomposed as $G_{\Gamma_{i,1} } \times G_{\Gamma_{i,2}}$, is called full if $N \cap G_{\Gamma_i} \not= 1$ for every $1 \leq i \leq k$.

\medskip
{\bf Theorem A} {\it Let $\Gamma$ be a  finite  simplicial graph and $G = G_{\Gamma}$ be the corresponding pro-$\C$ RAAG. Suppose that $N$ is a non-trivial, finitely generated, normal, full pro-$\C$ subgroup of  $G$. Then $G/ N$ is  finite-by-abelian, in particular is abelian-by-finite}.

\medskip
In Theorem \ref{graph-product} a more general version of Theorem A is proved for graph products of pro-$\C$ groups. Both results use the following general  result on finitely generated normal subgroups of pro-$\C$ groups acting on pro-$\C$ trees that is of independent interest (see Section 4 for more details).

{\bf Theorem B} {\it Let $G$ be a pro-$\C$ group acting faithfully and irreducibly  on a pro-$\C$ tree $T$ with $|T/G|<\infty$ and let $N$ be a finitely generated, non-trivial normal pro-$\C$ subgroup $N$ of $G$. Then  $T/ N$ is finite.}

\bigskip
The proof of the abstract version of Theorem B  uses essentially the fact that a  finitely generated subgroup $H$ of a group acting on a tree has an $H$-invariant subtree on which $H$ acts cofinitely. This is not true however in the pro-$\C$ case. Thus we needed to find a different argument that contributes essentially to the pro-$\C$ version of the Bass-Serre theory.

In the second part of the paper we restrict our attention to pro-$p$ RAAGs. In section 5
 we give a characterization of finitely generated normal subgroups of a pro-$p$ RAAG.

\medskip
{\bf Theorem C} {\it Let $\Gamma$ be a  finite  simplicial graph and $G = G_{\Gamma}$ be the corresponding pro-$p$ RAAG. Suppose that $N$ is a non-trivial, finitely generated, normal,  pro-$p$ subgroup of  $G$. Then   $N$ is finitely generated if and only if every normal subgroup $ N_0 \triangleleft G$ containing $N$ with $G/ N_0 \simeq \mathbb{Z}_p$  is finitely generated.}

Having this characterization the question of a description of normal subgroups with cyclic quotient arises. Suppose $\chi : G = G_{\Gamma} \to \mathbb{Z}_p$ is an epimorphism of pro- $p$ groups. Set $\Gamma(\chi)$ to be the induced subgraph of $\Gamma$ spanned  by the  vertices $\{ v \in V(\Gamma) \ | \ \chi(v) \not= 0 \}$. We say that $\Gamma(\chi)$ is dominant in $\Gamma$ if every vertex from $V(\Gamma)  \setminus V(\Gamma(\chi))$ is linked by edge with a vertex from $V(\Gamma(\chi))$.

{\bf Theorem D}  {\it $N = Ker (\chi)$  is finitely generated if and only if $\Gamma(\chi)$ is connected and dominant in $\Gamma$.}

In the last two sections  we study when a normal, coabelian pro-$p$ subgroup $N$ of a pro-$p$ RAAG $G$ has homological type $FP_n$. By definition this means that the trivial $\mathbb{Z}_p[[N]]$-module $\mathbb{Z}_p$ has a projective resolution with all modules in dimension $\leq n$ finitely generated. This is equivalent to the pro-$p$ homology $H_i(N
, \mathbb{F}_p)$ being finite for all $i \leq n$, see \cite{K2}.
In \cite{B-G} Bux and Gonzalez showed a homological criterion proved by geometric methods, that classifies when a coabelian normal subgroup of a discrete RAAG is of homological type $FP_n$. A discrete group $G$ has homological type $FP_n$ if the trivial $\mathbb{Z} G$-module $\mathbb{Z}$ has a projective resolution with all projectives finitely generated in dimension at most $n$. For general discrete groups $G$ there is no equivalence between the property $FP_n$ and finite generation of homologies $H_i( G, \mathbb{Z})$ for $i \leq n$. But in the case of a discrete RAAG $G$ and $N$ a normal subgroup of $G$ with $G/ N \simeq \mathbb{Z}$, Papadima and Suciu showed in \cite{P-S} that $N$ is of type $FP_n$ if and only if $H_i(N, k)$ is finite dimensional (over $k$) for every field $k$ and $ 1 \leq i \leq n$. This was latter generalized in \cite{K-MP} by Kochloukova and Martinez-Perez for every  $N$ coabelian in $G$ i.e. $G/N$ is abelian.

In order to state Theorem E we need several definitions.

 We fix a linear order in $ V (\Gamma)$. A clique of $\Gamma$ is a subset $S$ of $V(\Gamma)$ such that every two elements of $S$ are linked by an edge in $\Gamma$. We allow $S$ to be the empty set and denote by $|S|$ the cardinality of $S$. By definition the flag complex $\Delta(\Gamma)$ associated to the graph $\Gamma$ is the complex obtained from $\Gamma$ by gluing
a simplex $(v_1, \ldots , v_n)$ for every non-empty clique $\{ v_1, \ldots, v_n \}$ of $\Gamma$, where $ v_1 < v_2 < \ldots < v_n$. For a clique $S$ of $\Gamma$ the link $lk_{\Delta(\Gamma)} (S)$ is the subcomplex of $\Delta(\Gamma)$ spanned by all simplices $(v_1, \ldots, v_n)$ such that $S \cup \{ v_1, \ldots, v_n \}$ is a clique, $S \cap \{ v_1, \ldots, v_n \} = \emptyset$. We set $lk_{\Delta(\Gamma(\chi))} (S) = lk_{\Delta(\Gamma)} (S) \cap \Delta(\Gamma(\chi))$.

 Let $N$ be a pro-$p$ coabelian subgroup of a pro-$p$ RAAG $G$ and let $G^{dis}$ be the corresponding discrete RAAG, thus $G$ is the pro-$p$ completion of $G^{dis}$.  We call $N$ discretely embedded in $G$ if there is a subgroup $N^{dis}$ of $G^{dis}$ such that $G^{dis}/ N^{dis}$ is abelian and $N$ is the closure of $N^{dis}$ in $G$. We call $N$ weakly discretely embedded in $G$ if there is an automorphism $\varphi$ of $G$ such that $\varphi(N)$ is discretely embedded in $G$.

\medskip
{\bf Theorem E} {\it  Let $\Gamma$ be a finite simplicial graph and $G = G_{\Gamma}$ be the corresponding pro-$p$ RAAG. Let $N$ be a normal pro-$p$ subgroup of $G$ such that $G/ N$ is  infinite abelian. Then the following holds :

(a) suppose that $N = Ker(\chi)$, where $\chi : G \to \mathbb{Z}_p$ is an epimorphism of pro-$p$ groups.
Then $N$ is of type $FP_n$ if and only if $lk_{\Delta(\Gamma(\chi))} (S)$ is $(n-1- |S|)$-acyclic over $\mathbb{F}_p$ for every clique $S$ from $\Gamma \setminus \Gamma(\chi)$  with $|S| \leq n-1$. For $S = \emptyset$ this
translates to the flag complex $\Delta ( \Gamma(\chi))$ is $(n-1)$-acyclic over $\mathbb{F}_p$.

(b)  assume further that  $N$ is weakly discretely embedded in $G$. Then $N$ is of type $FP_n$ if and only if  every   $ N_0 \subseteq G$ containing $N$ with $G/ N_0 \simeq \mathbb{Z}_p$  is $FP_n$.

}

\medskip

 In  part (b) from Theorem E we  assume that   $N$ is weakly discretely embedded in $G$.
We show at the beginning of the proof of Theorem \ref{thm-link} that  if $G/ N \simeq \mathbb{Z}_p$ then $N$ is  weakly discretely embedded in $G$. We prove Theorem E (a) in Theorem \ref{thm-link} and  Theorem E (b) in Theorem \ref{thm-final}.
Both proofs use the fact that the corresponding results hold for discrete RAAGs.
We give a separate proof of weaker version of Theorem E (b)  in Theorem \ref{mild-cond}, that is self-contained in the category of pro-$p$ groups. In general whenever possible, we give proofs within the category of pro-$p$ groups.

\subsection*{Acknowledgement} The first  author was partially supported by Bolsa de produtividade em pesquisa CNPq 305457/2021-7 and Projeto tem\'atico FAPESP 18/23690-6. The second author was partially supported by  CNPq.

\section{Preliminaries}

\subsection{ Pro-$\C$ groups acting on pro-$\C$ trees}
 Let $\C$  be a class of finite groups closed for subgroups, quotients and extensions.
In this section we state some definitions and properties of pro-$\C$ groups acting on pro-$\C$ trees and
free amalgamated pro-$\C$ products. For further
information on this topic we refer the reader to \cite{R-Z2} and \cite{R}.

Following  \cite{R} a  triple $(\Gamma, d_0, d_1)$  is a profinite graph if $\Gamma$  is a
boolean space i.e. totally-disconnected compact space and $$d_0, d_1 : \Gamma \to \Gamma$$ are continuous maps such that $d_i d_j = d_j$ whenever $i,j \in \{ 0,1 \}$. We call
$V (\Gamma) := d_0(\Gamma) \cup d_1(\Gamma)$ the set of vertices of $\Gamma$ and $E(\Gamma) := \Gamma \setminus V (\Gamma)$ the set of edges of $\Gamma$. For $e \in E(\Gamma)$ we call $d_0(e)$ and $d_1(e)$  the initial and
terminal vertices of  the edge $e$. To simplify the notation we write $\Gamma$ for $(\Gamma, d_0, d_1)$. By definition
 $(E^*(\Gamma), *) = (\Gamma/V (\Gamma), *)$ is a pointed profinite quotient space with  a
distinguished point $V (\Gamma)$.

Consider $\mathbb{F}_p[[E^*(\Gamma), *]]$ and $\mathbb{F}_p[[V (\Gamma)]]$ the free profinite $\mathbb{F}_p$-modules
over the pointed profinite space $(E^* (\Gamma), *)$ and over the profinite space $V (\Gamma)$.  For further
information on free profinite modules we refer the reader to \cite{R-Z2}.  Write $\pi(\mathcal{C})$ for the set of primes involved in ${\C}$ and let $p \in \pi(\C)$.
Consider the complex of free profinite $\mathbb{F}_p$-modules
\begin{equation} \label{def1} 0 {\to} \mathbb{F}_p[[E^*(\Gamma), *]] \stackrel{\delta}{\to} \mathbb{F}_p[[V (\Gamma)]] \stackrel{\rho}\to \mathbb{F}_p \to 0
\end{equation}
where $\delta(e) = d_1(e) - d_0(e)$
 for all $e \in E^* (\Gamma)$
and $\rho(v) = 1$ for all $ v \in V (\Gamma)$.
The profinite graph $\Gamma$ is connected if (\ref{def1}) is exact in the middle. The graph $\Gamma$ is a  pro-$\C$ tree if (\ref{def1}) is exact for every $p \in \pi(\C)$. A  pro-$\C$ group $G$ acts on the pro-$\C$
tree $\Gamma$ if it acts continuously on $\Gamma$  such that the action commutes with $d_0$ and $d_1$. We write $G_x$ for the stabilizer of $x \in \Gamma$ in $G$. If the kernel of action is trivial we say that $G$ acts faithfully. If $T$ coincides with its minimal $G$-invariant subtree we say that $G$ acts irreducibly.

Let $A, B$ and $C$ be  pro-$\C$ groups such that $B$ embeds as a  pro-$\C$ subgroup of
$A$ and $C$. Let $ G = A \amalg_B C$ be the amalgamated free  pro-$\C$ product. The amalgamated free  pro-$\C$ product is called proper if $A$ and $C$ embed in $G$. If the
 decomposition of $G$ as an amalgamated free pro-$\C$ product is proper, then there is
a pro-$\C$ tree $T$ on which $G$ acts via left multiplication, where $V (T ) = \{gA \}_{g\in G} \cup \{gC \}_{g \in G}$ and
$E(G) = \{gB \}_{g \in G}$. Here $d_0(gB) = gA$ and $d_1(gB) = gC$. Note
that the quotient $T /G$ is  an edge with two vertices.

In general an amalgamated free pro-$\C$ product $ G = A \amalg_B C$ need not be proper, but if $B$ is procyclic it is proper \cite{R2}. Criterion for a pro-$p$ HNN extension to be proper can be found in \cite{Ch}.

If a pro-$\C$ group acts on a pro-$\C$ tree we use the notation $\widetilde G$ for the subgroup of $G$ generated (topologically) by all vertex stabilizers.

\begin{theorem}\label{quotient tree}  \cite[Prop. 2.5 + Theorem 2.6]{Z-M 89b} or \cite[Prop 4.4.1 + Cor. 4.1.3]{R} \label{prelim1}   Let $G$ be a pro-$\C$ group acting on a pro-$\C$
tree $T$. Then $G/\widetilde G$ acts on the pro-$\C$ tree $T/\widetilde G$ and $G/\widetilde G$ is
a  projective  pro-$\C$ group.
\end{theorem}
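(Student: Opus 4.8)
The plan is to proceed in three stages: record that $\widetilde{G}$ is normal and push the action down to the quotient graph, then verify that this quotient is again a pro-$\C$ tree, and finally deduce projectivity from the freeness of the induced action.

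First I would observe that $\widetilde{G}$ is normal in $G$. Since $G$ acts by automorphisms of the graph, conjugation permutes the vertex stabilizers according to $g G_x g^{-1} = G_{gx}$ for $x \in V(T)$ and $g \in G$. As $\widetilde{G}$ is the closed subgroup topologically generated by all the $G_x$, every inner automorphism of $G$ carries $\widetilde{G}$ onto itself, so $\widetilde{G} \trianglelefteq G$. Consequently $G/\widetilde{G}$ is a pro-$\C$ group. Using normality, $g \cdot \widetilde{G}x = \widetilde{G}gx$ is a well-defined continuous action of $G$ on the profinite quotient graph $T/\widetilde{G}$ on which $\widetilde{G}$ acts trivially, so the action factors through $G/\widetilde{G}$. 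This supplies the action asserted in the statement, provided the quotient is known to be a pro-$\C$ tree.

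The heart of the argument is showing that $T/\widetilde{G}$ is a pro-$\C$ tree, i.e.\ that the associated complex
$$0 \to \mathbb{F}_p[[E^*(T/\widetilde{G}), *]] \xrightarrow{\delta} \mathbb{F}_p[[V(T/\widetilde{G})]] \xrightarrow{\rho} \mathbb{F}_p \to 0$$
is exact for every $p \in \pi(\C)$. I would obtain this complex from the exact complex (\ref{def1}) for $T$ by applying the coinvariants functor $-\,\widehat{\otimes}_{\mathbb{F}_p[[\widetilde{G}]]}\mathbb{F}_p$, using that the coinvariants of the free profinite modules $\mathbb{F}_p[[V(T)]]$ and $\mathbb{F}_p[[E^*(T),*]]$ on the $\widetilde{G}$-spaces $V(T)$ and $E^*(T)$ are exactly the free modules on the orbit spaces $V(T/\widetilde{G})$ and $E^*(T/\widetilde{G})$. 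Right exactness is automatic, so the content is the injectivity of $\delta$ together with exactness in the middle. This is governed by the long exact sequence of $\widetilde{G}$-homology applied to (\ref{def1}), whose error terms are the groups $H_i(\widetilde{G}, \mathbb{F}_p[[V(T)]])$ and $H_i(\widetilde{G}, \mathbb{F}_p[[E^*(T),*]])$, which by Shapiro's lemma are assembled from $H_i(G_x,\mathbb{F}_p)$ over vertex and edge representatives. The feature that forces the obstruction to cancel is that every stabilizer already lies in $\widetilde{G}$; in particular each edge stabilizer satisfies $G_e \subseteq G_{d_0(e)} \subseteq \widetilde{G}$, so the edge and vertex contributions match up in the relevant range and the connecting maps vanish. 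I expect this homological bookkeeping to be the main obstacle, precisely because in the pro-$\C$ setting, unlike the discrete one, a quotient of a pro-$\C$ tree need not be a pro-$\C$ tree and there is no combinatorial fundamental-domain argument to fall back on.

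With $T/\widetilde{G}$ established as a pro-$\C$ tree, I would check that $G/\widetilde{G}$ acts freely on it. For a vertex orbit $\widetilde{G}x$, an element $g\widetilde{G}$ fixes it iff $gx \in \widetilde{G}x$, i.e.\ iff $g \in \widetilde{G}\,G_x = \widetilde{G}$ since $G_x \subseteq \widetilde{G}$; hence the vertex stabilizers in $G/\widetilde{G}$ are trivial, and the identical computation, using $G_e \subseteq \widetilde{G}$, handles the edges. Thus the action is free. Finally, a pro-$\C$ group acting freely on a pro-$\C$ tree is projective, which is the cited \cite[Cor.\ 4.1.3]{R}; applying this to the free action of $G/\widetilde{G}$ on $T/\widetilde{G}$ shows that $G/\widetilde{G}$ is projective, completing the proof.
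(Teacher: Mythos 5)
This theorem is one the paper does not prove at all: it is quoted as a preliminary, with the citations \cite[Prop. 2.5 + Theorem 2.6]{Z-M 89b} and \cite[Prop. 4.4.1 + Cor. 4.1.3]{R}, and those citations already encode exactly your skeleton --- first that $T/\widetilde{G}$ is a pro-$\C$ tree on which $G/\widetilde{G}$ acts freely, then that a pro-$\C$ group acting freely on a pro-$\C$ tree is projective. Within that skeleton, your normality argument ($gG_xg^{-1}=G_{gx}$), your freeness computation (the stabilizer of the point $\widetilde{G}x$ is $\widetilde{G}G_x/\widetilde{G}$, trivial since $G_x\subseteq \widetilde{G}$ for vertices and $G_e\subseteq G_{d_0(e)}\subseteq\widetilde{G}$ for edges), and the final appeal to \cite{R} are all correct. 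Your reduction of the tree claim is also correctly located: coinvariants are right exact, so exactness of the quotient complex at the two right-hand spots is automatic, and everything hinges on injectivity of the quotient map $\bar\delta$, equivalently on the vanishing of the connecting map $H_1(\widetilde{G},\mathbb{F}_p)\to \mathbb{F}_p[[E^*(T/\widetilde{G}),*]]$, equivalently on surjectivity of $H_1(\widetilde{G},\mathbb{F}_p[[V(T)]])\to H_1(\widetilde{G},\mathbb{F}_p)$.

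The genuine gap is the reason you give for that vanishing. You attribute it to the fact that every stabilizer lies in $\widetilde{G}$, emphasizing $G_e\subseteq G_{d_0(e)}\subseteq\widetilde{G}$. Containment cannot be the operative fact: it holds verbatim for any closed normal subgroup containing $\widetilde{G}$, in particular for $G$ itself, yet $T/G$ is in general not a pro-$\C$ tree. Concretely, let $G=\mathbb{Z}_p$ act freely by translation on its Cayley graph $T$, which is a pro-$p$ tree; all stabilizers are trivial, hence contained in $G$, but $T/G$ is a single loop, and running your argument with $G$ in place of $\widetilde{G}$ gives $H_1(G,\mathbb{F}_p[[V(T)]])=H_1(G,\mathbb{F}_p[[G]])=0$ mapping to $H_1(G,\mathbb{F}_p)=\mathbb{F}_p$, so the connecting map does \emph{not} vanish. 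What forces surjectivity when one quotients precisely by $\widetilde{G}$ is that $\widetilde{G}$ is topologically \emph{generated} by the vertex stabilizers: since $H_1(H,\mathbb{F}_p)=H/\overline{H^p[H,H]}$ and the maps coming from Shapiro's lemma are induced by the inclusions $G_v\hookrightarrow\widetilde{G}$, the images of the $H_1(G_v,\mathbb{F}_p)$ generate the target exactly because the $G_v$ generate $\widetilde{G}$. Moreover, the Shapiro step itself is not off-the-shelf here: $V(T)$ is a profinite $\widetilde{G}$-space with profinite orbit space and varying stabilizers, so $H_1(\widetilde{G},\mathbb{F}_p[[V(T)]])$ is not a direct sum over ``representatives'' but a continuous assembly, and making that precise (via sheaves over the orbit space or inverse limits of finite quotient actions) is essentially the entire technical content of \cite[Prop. 2.5]{Z-M 89b} and \cite[Prop. 4.4.1]{R}. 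So your plan is the right one, but its decisive step is asserted rather than proved, and the justification you do offer is demonstrably insufficient.
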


\begin{lemma}\label{equal tilde}  \cite[Cor 3.9.3 + Prop 3.10,4]{R} or \cite[Theorems 2.6 + 2.8]{Z-89} \label{prelim2} Let $G$ be a pro-$\C$ group acting on a pro-$\C$ tree $T$ such that $T /G$ is a
pro-$\C$ tree. Then $G = \widetilde G$.
\end{lemma}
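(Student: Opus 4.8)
The plan is to factor out the subgroup generated by the stabilizers and to show that the remaining quotient is trivial. Write $\bar G := G/\widetilde G$ and $\bar T := T/\widetilde G$. By Theorem~\ref{prelim1}, $\bar G$ acts on the pro-$\C$ tree $\bar T$ and is a projective pro-$\C$ group, and the hypothesis gives that $\bar T/\bar G = T/G$ is again a pro-$\C$ tree. The first point I would record is that this action of $\bar G$ on $\bar T$ is \emph{free}. Indeed, since $\widetilde G$ is generated by the vertex stabilizers and every edge stabilizer is contained in a vertex stabilizer, $\widetilde G$ contains the stabilizer $G_x$ of every point $x\in T$. If a coset $g\widetilde G$ fixes the $\widetilde G$-orbit of $x$, then $gx=hx$ for some $h\in\widetilde G$, whence $h^{-1}g\in G_x\subseteq\widetilde G$ and so $g\in\widetilde G$. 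Thus all vertex and edge stabilizers of $\bar G$ on $\bar T$ are trivial.

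Next I would upgrade projectivity to freeness: by the structure theory of free pro-$\C$ actions (see \cite{R}), a projective pro-$\C$ group acting freely on a pro-$\C$ tree is a free pro-$\C$ group, so $\bar G$ is free pro-$\C$. A nontrivial free pro-$\C$ group admits a continuous surjection onto $\Z/p$ for every $p\in\pi(\C)$, so $H_1(\bar G,\F_p)\ne 0$ for some $p$ unless $\bar G=1$. Hence it suffices to prove that $H_1(\bar G,\F_p)=0$ for every $p\in\pi(\C)$.

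The homological heart of the argument is to compute $H_*(\bar G,\F_p)$ directly from the tree. Because $\bar T$ is a pro-$\C$ tree, the complex (\ref{def1}) for $\bar T$ is a short exact sequence
$$0 \to \F_p[[E^*(\bar T),*]] \xrightarrow{\delta} \F_p[[V (\bar T)]] \to \F_p \to 0$$
of $\F_p[[\bar G]]$-modules, i.e.\ a length-one resolution of the trivial module $\F_p$. Since $\bar G$ acts freely on the profinite spaces $V(\bar T)$ and $E^*(\bar T)$, both nontrivial modules are free $\F_p[[\bar G]]$-modules, so this is a free resolution. Applying $\F_p\,\hat\otimes_{\F_p[[\bar G]]}(-)$ and using that coinvariants send the free module on $V(\bar T)$ to the free module on $V(\bar T)/\bar G = V(\bar T/\bar G)$, and likewise for the pointed edge module, turns this resolution into the two-term complex underlying (\ref{def1}) for the quotient graph $\bar T/\bar G$. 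Therefore $H_n(\bar G,\F_p)$ equals the homology of (\ref{def1}) for $\bar T/\bar G$. By hypothesis $\bar T/\bar G$ is a pro-$\C$ tree, so that complex is exact for every $p\in\pi(\C)$; in particular $H_1(\bar G,\F_p)=0$, and with the previous paragraph this forces $\bar G=1$, that is $G=\widetilde G$.

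The step I expect to be the main obstacle is the coinvariants computation: making rigorous that freeness of the action yields genuinely free $\F_p[[\bar G]]$-modules and that taking $\bar G$-coinvariants of the defining complex of $\bar T$ reproduces, up to natural isomorphism, the defining complex of $\bar T/\bar G$ — in particular the careful bookkeeping of the distinguished point in the pointed module $\F_p[[E^*(\cdot),*]]$ under passage to coinvariants, and the verification that the quotient map on chains is induced by the quotient of graphs. The reduction to $\widetilde G$ and the final homological vanishing are then formal.
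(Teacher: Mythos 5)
Your reduction and your homological computation are essentially sound, and in fact the step you flag as the expected main obstacle is the unproblematic one. The verification that $\bar G = G/\widetilde G$ acts freely on $\bar T = T/\widetilde G$ is correct (edge stabilizers lie in vertex stabilizers since the action commutes with $d_0,d_1$, so $\widetilde G$ contains all point stabilizers, and your coset computation then applies); and the computation of $H_1(\bar G,\F_p)$ from the length-one resolution given by the defining complex of $\bar T$, whose $\bar G$-coinvariants reproduce the defining complex of $T/G$, is legitimate: the facts needed there (a free action of a profinite group on a profinite space admits a continuous section, also in the relative form needed to handle the pointed module $\F_p[[E^*(\bar T),*]]$) are standard, see \cite{R-Z2}. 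Note that the paper itself gives no proof of this lemma; it is quoted from \cite{Z-89} and \cite{R}, so yours is necessarily an independent argument.

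The genuine gap is the step that converts $H_1(\bar G,\F_p)=0$ for all $p\in\pi(\C)$ into $\bar G=1$, namely the claim that a projective pro-$\C$ group acting freely on a pro-$\C$ tree is free pro-$\C$. That claim is false and is not a theorem of \cite{R}: take $\C$ to be the class of all finite groups; then $\Z_p$ is projective, and it acts freely (by translation, as a closed subgroup of $\widehat{\Z}$) on the Cayley graph of the free profinite group $\widehat{\Z}$ with respect to a basis, which is a profinite tree — yet $\Z_p$ is not a free profinite group. If you add the hypothesis that the quotient of the tree by the group is again a pro-$\C$ tree, the claim becomes true, but only because it is then exactly Lemma~\ref{equal tilde} itself (a free action means $\widetilde{\bar G}=1$, so the lemma forces $\bar G=1$); invoking it would be circular. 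Nor can the upgrade to freeness simply be dropped: projectivity together with $H_1(\cdot,\F_p)=0$ for all $p$ does not imply triviality, since nontrivial perfect projective profinite groups exist (e.g.\ the universal Frattini cover of $A_5$). So for general $\C$ your argument stops at "$\bar G$ is projective and perfect", which is not enough; the missing ingredient — that a $\C$-tree admits no nontrivial connected Galois covering, i.e.\ is $\C$-simply connected — is precisely the deep content of the results the paper cites (\cite[Cor.\ 3.9.3 and Prop.\ 3.10.4]{R}, \cite[Thms.\ 2.6, 2.8]{Z-89}). In the pro-$p$ case your proof is complete, because projective pro-$p$ groups are free pro-$p$ by Serre's theorem and then $H_1(\bar G,\F_p)=0$ does give $\bar G=1$; but the paper states and uses the lemma for arbitrary $\C$ (e.g.\ in Lemma~\ref{intersection} and Theorem~\ref{main}), where the gap is real.
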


\begin{theorem}  \cite[Thm. 2.10]{Z-M 89b} or \cite[Theorem 4.1.8]{R}  \label{subtree} Let $G$ be a pro-$\C$ group acting on a pro-$\C$ tree $T$. Then $$T^G = \{ m \in T \ | \ gm = m \hbox{ for every } g \in G \}$$ is a pro-$\C$ subtree of $T$.
\end{theorem}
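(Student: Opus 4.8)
The plan is to separate the easy topological part from the genuine content, which is connectedness of the fixed set. First I would check that $T^G$ is a profinite subgraph of $T$. Since the action is continuous, for each $g\in G$ the set $\mathrm{Fix}(g)=\{m\in T : gm=m\}$ is the equalizer of the two continuous maps $m\mapsto gm$ and $m\mapsto m$, hence closed; thus $T^G=\bigcap_{g\in G}\mathrm{Fix}(g)$ is closed in the boolean space $T$ and is therefore itself a boolean space. Because the action commutes with $d_0$ and $d_1$, if $m\in T^G$ then $g\,d_i(m)=d_i(gm)=d_i(m)$ for every $g$, so $d_i(m)\in T^G$; hence $T^G$ is invariant under $d_0,d_1$ and $(T^G,d_0,d_1)$ is a profinite subgraph of $T$.

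It remains to prove that, if nonempty, $T^G$ is a pro-$\C$ tree. Here I would invoke the structure theory of pro-$\C$ trees: between any two points $m,m'$ of a pro-$\C$ tree there is a unique smallest pro-$\C$ subtree $[m,m']$ joining them, this subtree is a \emph{chain} (an inverse limit of finite segments), and a nonempty closed subgraph $D$ of $T$ is a pro-$\C$ subtree exactly when it is convex, i.e. $[m,m']\subseteq D$ for all $m,m'\in D$ (equivalently, it is connected, having no circuits as a subgraph of $T$). Granting this, the theorem reduces to showing that $T^G$ is convex. So I fix $m,m'\in T^G$; the goal is $[m,m']\subseteq T^G$.

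Since every $g\in G$ acts on $T$ by an automorphism of profinite graphs and fixes $m$ and $m'$, it carries $[m,m']$ to the smallest subtree joining $gm=m$ and $gm'=m'$, which by uniqueness is again $[m,m']$; hence $G$ stabilizes $[m,m']$ setwise. The crux is to upgrade this to a \emph{pointwise} fixing. Writing the chain $[m,m']$ as an inverse limit $\varprojlim_\lambda P_\lambda$ of finite segments and arranging the defining equivalence relations to be $G$-invariant (possible because $G$ acts through a finite group on each finite quotient), each $g\in G$ induces on the finite path $P_\lambda$ an automorphism fixing the two endpoints, namely the images of $m$ and $m'$. A finite segment admits no automorphism fixing both endpoints other than the identity, so $g$ acts trivially on every $P_\lambda$ and hence trivially on $[m,m']=\varprojlim_\lambda P_\lambda$. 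Thus $[m,m']\subseteq T^G$, which gives convexity and therefore that $T^G$ is a pro-$\C$ subtree.

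The step I expect to be the main obstacle is precisely this passage from setwise to pointwise fixing. In the discrete setting it is the trivial remark that an automorphism fixing the two endpoints of a geodesic fixes the geodesic; in the pro-$\C$ world there is no metric or Bass--Serre path to lean on, and the argument rests entirely on two structural facts that must be secured within the category of pro-$\C$ trees: that the minimal subtree $[m,m']$ exists, is unique, and is branch-free (a chain, hence an inverse limit of finite paths on which $G$ can be made to act compatibly), and that closed convex subgraphs are exactly the subtrees. Once these inputs are in place the argument above is routine.
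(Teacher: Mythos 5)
The paper itself contains no proof of this statement: it is imported wholesale from \cite[Thm. 2.10]{Z-M 89b} and \cite[Theorem 4.1.8]{R}, so your attempt can only be measured against those sources. Your skeleton does match their strategy: $T^G$ is a closed subgraph stable under $d_0,d_1$ (your first step is correct and complete), the problem reduces to convexity, $G$ stabilizes the geodesic $[m,m']$ setwise by uniqueness, and the heart of the matter is upgrading this to pointwise fixing.

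That upgrading step --- the one you yourself flag as the main obstacle --- is where your argument has a genuine gap. You claim $[m,m']$ can be written as an inverse limit of finite segments $P_\lambda$ whose defining clopen equivalence relations are $G$-invariant, ``possible because $G$ acts through a finite group on each finite quotient''. That justification is circular: $G$ acts on a finite quotient only \emph{after} the corresponding equivalence relation is known to be $G$-invariant. The natural repair --- start from an arbitrary segment decomposition and pass to a $G$-invariant refinement, which does exist by compactness of $G$ and continuity of the action --- does not give what you need, because a finite quotient graph of a chain need not be a chain, nor even a tree: already the one-edge chain maps onto a loop by identifying its two vertices, and a segment of length $2n$ folds onto a circuit of length $2n$. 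So the $G$-invariant quotients you are forced to use may contain circuits, and on an even circuit the reflection fixes two antipodal vertices without being the identity; the finite-level argument ``an automorphism fixing both endpoints is trivial'' is exactly what breaks on such quotients. This is precisely the point where \cite{Z-M 89b} and \cite{R} invest real work, via the canonical closed linear order on a chain (where $x \preceq y$ if and only if $x \in [m,y]$), which any automorphism fixing the endpoints must preserve, combined with compactness arguments in that order and the graph structure. Note also that the two ``structural facts'' you invoke without proof (existence, uniqueness and chain structure of $[m,m']$, and the criterion that nonempty closed convex subgraphs are pro-$\C$ subtrees) are themselves substantial results from the same circle of ideas in \cite{R}; granting them is legitimate, but what remains after granting them is exactly the step your argument fails to establish.
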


\begin{corollary}\label{fixing vertex}  Let $G$ be a pro-$\C$ group acting on a pro-$\C$ tree $T$. Suppose there exists a system of open normal subgroups $U_i$ of $G$  with $\bigcap_i U_i=1$ such that $U_i=\widetilde U_i$. Then $G$ fixes a vertex of $T$.
\end{corollary}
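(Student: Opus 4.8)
The plan is to descend to the finite quotients $G/U_i$, fix a vertex in each quotient tree, and then reassemble a genuine $G$-fixed vertex by a compactness argument. First I would fix an index $i$ and restrict the action to the normal subgroup $U_i$. Since $U_i=\widetilde{U_i}$, Theorem~\ref{quotient tree} applied with $U_i$ in place of $G$ says that $U_i/\widetilde{U_i}$ acts on the pro-$\C$ tree $T/\widetilde{U_i}=T/U_i$; reading off the phrase ``pro-$\C$ tree $T/\widetilde{U_i}$'' from that statement, this means precisely that $T/U_i$ is itself a pro-$\C$ tree. (By Lemma~\ref{equal tilde} the two conditions $U_i=\widetilde{U_i}$ and ``$T/U_i$ a pro-$\C$ tree'' are in fact equivalent, so nothing is lost in this reduction.)

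Next, because $U_i$ is open and normal, the quotient $G/U_i$ is finite and, as $\C$ is closed under taking quotients, lies in $\C$. It acts on the pro-$\C$ tree $T/U_i$, the residual action of $U_i$ on $T/U_i$ being trivial, so $(T/U_i)^G=(T/U_i)^{G/U_i}$. The key imported ingredient is that a finite group acting on a pro-$\C$ tree fixes a vertex (this is where the extension-closedness of $\C$ is used, cf.\ \cite{R}, \cite{Z-M 89b}); it yields $(T/U_i)^G\ne\emptyset$, and by Theorem~\ref{subtree} this fixed-point set is a pro-$\C$ subtree of $T/U_i$, in particular a compact subset. I would then pass to the limit: organising the $U_i$ into a cofiltered fundamental system, one has $T=\varprojlim_i T/U_i$, and the $G$-equivariant projections identify $T^G$ with $\varprojlim_i (T/U_i)^G$; as an inverse limit of non-empty compact spaces it is non-empty. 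Finally, $T^G$ is a pro-$\C$ subtree by Theorem~\ref{subtree}, and it is stable under $d_0$ (if $m\in T^G$ then $g\,d_0(m)=d_0(gm)=d_0(m)$ for all $g\in G$), so it contains a vertex, which is the desired $G$-fixed vertex.

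The hard part is the passage to the limit in the last step: the mere existence of a fixed vertex in each $T/U_i$ does not automatically produce a common fixed vertex, so one must genuinely treat the quotients as a cofiltered system and verify the identification $T^G=\varprojlim_i(T/U_i)^G$, leaning on $\bigcap_i U_i=1$ together with the compactness of the fixed-point subtrees from Theorem~\ref{subtree}. A real subtlety to watch is that the property $U_i=\widetilde{U_i}$ need not be preserved under intersections, so care is required to keep each relevant quotient a bona fide pro-$\C$ tree at the level where the finite-group fixed-vertex theorem is invoked; in the intended applications the $U_i$ form a descending (hence directed and cofinal) chain, which makes this organisation straightforward. The other point worth isolating explicitly is the finite-group fixed-vertex statement itself, which is the engine of Step~2.
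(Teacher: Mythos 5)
Your proof is correct and follows essentially the same route as the paper's: pass to the finite quotients $G/U_i$ acting on the pro-$\C$ trees $T/U_i = T/\widetilde{U_i}$ (via Theorem~\ref{quotient tree}), invoke the fixed-point theorem for finite groups acting on pro-$\C$ trees (the paper cites \cite[Theorem 4.1.8]{R}), and obtain $T^G = \varprojlim_i (T/U_i)^{G/U_i} \neq \emptyset$ as an inverse limit of nonempty compact sets. Your two refinements—extracting an actual fixed \emph{vertex} from a fixed point via $d_0$-stability of $T^G$, and noting that the system $\{U_i\}$ must be treated as directed for the limit identification to work—are correct points that the paper's proof leaves implicit.
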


\begin{proof} By Theorem \ref{quotient tree} $T/ U_i$ is a pro-$\C$ tree on which  the finite quotient group $G/U_i$ acts. So by \cite[Theorem 4.1.8]{R} $(T/U_i)^{  G/U_i}\neq \emptyset$. Hence $T^G=\varprojlim_i (T/U_i)^{ G/U_i}\neq \emptyset$ as required.
\end{proof}

\begin{corollary}\label{fixed vertex} Let $G$ be a pro-$\C$ group acting on a pro-$\C$ tree $T$. Suppose either
\begin{enumerate}
\item[(i)] $G$ possesses an open normal subgroup $U$ fixing a vertex;

or

\item[(ii)] Every element of $G$ fixes a vertex.
\end{enumerate}
Then $G$ fixes a vertex of $T$.

\end{corollary}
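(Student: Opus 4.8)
The plan is to treat the two hypotheses separately, since they call for genuinely different arguments: (i) is handled by passing to the fixed subtree of $U$ and invoking that a finite group acts with a global fixed vertex, while (ii) is designed to reduce to Corollary~\ref{fixing vertex}.

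For (i), I would first note that since $U$ fixes a vertex we have $T^U \neq \emptyset$, and by Theorem~\ref{subtree} the set $T^U$ is a pro-$\C$ subtree, hence a pro-$\C$ tree in its own right containing at least one vertex. Normality of $U$ in $G$ makes $T^U$ invariant under $G$: for $g \in G$, $u \in U$ and $m \in T^U$ one has $u(gm) = g((g^{-1}ug)m) = gm$, because $g^{-1}ug \in U$ fixes $m$. Thus $G$ acts on the pro-$\C$ tree $T^U$, and $U$ lies in the kernel of this action, so the finite group $G/U$ acts on $T^U$. I would then apply \cite[Theorem 4.1.8]{R} (a finite group acting on a pro-$\C$ tree fixes a vertex, which is exactly the input used in Corollary~\ref{fixing vertex}) to obtain a vertex of $T^U \subseteq T$ fixed by all of $G$.

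For (ii), the strategy is to manufacture the hypotheses of Corollary~\ref{fixing vertex}. Choose a fundamental system $\{U_i\}$ of open normal subgroups of $G$ with $\bigcap_i U_i = 1$, which exists since $G$ is profinite. Fix $i$. Every $u \in U_i$ also lies in $G$, so by assumption $u$ fixes some vertex $x$, i.e. $u \in U_i \cap G_x \subseteq \widetilde{U_i}$. As this holds for every element of $U_i$ and $\widetilde{U_i}$ is closed, we get $U_i = \widetilde{U_i}$. Having verified this for all $i$, Corollary~\ref{fixing vertex} immediately yields that $G$ fixes a vertex of $T$.

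The main point to get right, and the only place where the two cases truly diverge, is that the ellipticity hypothesis in (ii) passes to every open normal subgroup $U_i$ and thereby forces $U_i = \widetilde{U_i}$, whereas in (i) no such uniform ellipticity is available (elements of $G \setminus U$ need not fix a vertex); this is precisely why (i) must instead be routed through the fixed subtree $T^U$ together with the finiteness of $G/U$. Both reductions are short once the relevant pro-$\C$ tree has been identified ($T^U$ in the first case, and the data feeding Corollary~\ref{fixing vertex} in the second).
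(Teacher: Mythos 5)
Your proposal is correct, but it splits from the paper on case (i). Case (ii) is exactly the paper's argument: every element of each open normal subgroup $U_i$ fixes a vertex, hence $U_i\subseteq\widetilde{U_i}$, and Corollary~\ref{fixing vertex} applies. For case (i) the paper stays inside Corollary~\ref{fixing vertex} as well: it takes $\{U_i\}$ to be all open normal subgroups of $G$ contained in $U$; each such $U_i$ fixes the vertex fixed by $U$, so $U_i$ coincides with one of its own vertex stabilizers and a fortiori $U_i=\widetilde{U_i}$, while $\bigcap_i U_i=1$ because for any open normal $W\trianglelefteq G$ the subgroup $W\cap U$ is again open, normal and contained in $U$. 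This shows that your closing claim --- that in (i) ``no such uniform ellipticity is available'' and that (i) \emph{must} be routed through the fixed subtree --- is mistaken: Corollary~\ref{fixing vertex} does not require every element of $G$ to be elliptic, only that $U_i=\widetilde{U_i}$ for a system with trivial intersection, and subgroups of $U$ satisfy this trivially since they fix a vertex outright. That said, your alternative treatment of (i) is perfectly valid: $T^U$ is nonempty, is a pro-$\C$ subtree by Theorem~\ref{subtree}, is $G$-invariant by the normality computation $u(gm)=g((g^{-1}ug)m)=gm$, and the finite $\C$-group $G/U$ acting on it has a fixed vertex by \cite[Theorem 4.1.8]{R}. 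This uses the same two ingredients that underlie Corollary~\ref{fixing vertex} (the fixed-subtree theorem and the fixed-point theorem for finite groups) but avoids the inverse limit over a system of subgroups, so it is somewhat more self-contained; the paper's reduction buys uniformity, disposing of both cases in one line.
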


\begin{proof} To apply Corollary \ref{fixing vertex} in the case (i) take $U_i$ to be all open normal subgroups of $G$ contained in $U$ and in the case (ii) all open normal subgroups of $G$.
\end{proof}

The following lemma will be needed in the next section.

\begin{lemma}\label{intersection}
Let $G$ be a pro-$\C$ group acting on a pro-$\C$ tree $T$. Let $\mathcal{U}$ be a directed set of some open pro-$\C$ subgroups of $G$ and  $N = \cap_{U \in \mathcal{U}} U$. Then $\widetilde{N} = \cap_{U \in \mathcal{U}} \widetilde{U}$.
\end{lemma}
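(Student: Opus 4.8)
The plan is to prove the two inclusions separately, the inclusion $\widetilde N \subseteq \cap_{U} \widetilde U$ being routine and the reverse inclusion being the substance of the lemma. Write $N_x = N \cap G_x$ and $U_x = U \cap G_x$ for the vertex stabilizers, so that $\widetilde N = \overline{\langle N_x \mid x \in T\rangle}$ and $\widetilde U = \overline{\langle U_x \mid x \in T\rangle}$. Since $N \subseteq U$ we have $N_x \subseteq U_x \subseteq \widetilde U$ for every $x$ and every $U \in \mathcal U$, whence $\widetilde N \subseteq \widetilde U$ and therefore $\widetilde N \subseteq K := \cap_{U \in \mathcal U} \widetilde U$. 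Conversely $\widetilde U \subseteq U$ gives $K \subseteq \cap_U U = N$, so that $\widetilde N \subseteq K \subseteq N$. First I would record two normality facts: conjugation by $n \in N$ sends $N_x$ to $N_{nx}$, so $\widetilde N \triangleleft N$; and since $N \subseteq U$ and $\widetilde U \triangleleft U$, the group $N$ normalizes every $\widetilde U$ and hence normalizes $K$, giving $\widetilde N \triangleleft K \triangleleft N$.

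Next I pass to the quotient tree. By Theorem~\ref{quotient tree} applied to $N$, the quotient $\overline T := T/\widetilde N$ is a pro-$\C$ tree on which $\overline N := N/\widetilde N$ acts. A direct computation of stabilizers shows this action is free: the stabilizer of $\widetilde N x$ in $\overline N$ is $\widetilde N N_x/\widetilde N$, which is trivial because $N_x \subseteq \widetilde N$ by the very definition of $\widetilde N$. The heart of the argument is to show that $T/K$ is again a pro-$\C$ tree. For this I would apply Theorem~\ref{quotient tree} to each $U$ to see that each $T/\widetilde U$ is a pro-$\C$ tree, and then identify $T/K$ with the inverse limit $\varprojlim_{U} T/\widetilde U$ taken over the (downward directed) system $\mathcal U$. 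The identification rests on a compactness argument: for a fixed $x$ and $y$ the sets $\{ u \in \widetilde U : u x = y\}$ form a directed family of closed subsets of the compact group $G$, so the finite intersection property yields $\cap_{U} \widetilde U x = K x$; hence two points of $T$ have the same image in every $T/\widetilde U$ exactly when they lie in a common $K$-orbit, giving $\varprojlim_{U} T/\widetilde U \cong T/K$. Since an inverse limit of pro-$\C$ trees along epimorphisms is a pro-$\C$ tree \cite{R}, it follows that $T/K$ is a pro-$\C$ tree.

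I expect this identification step — realizing $T/K$ as an inverse limit of the trees $T/\widetilde U$ and invoking stability of pro-$\C$ trees under inverse limits — to be the main obstacle, as it is exactly where the directedness of $\mathcal U$ and compactness are used essentially. With it in hand the conclusion is immediate. The group $\overline K := K/\widetilde N$ acts on the pro-$\C$ tree $\overline T$ with quotient $\overline T/\overline K \cong T/K$, which we have just shown to be a pro-$\C$ tree. Lemma~\ref{equal tilde} then gives $\overline K = \widetilde{\overline K}$, the subgroup generated by the vertex stabilizers of $\overline K$ acting on $\overline T$. But $\overline K \subseteq \overline N$ acts freely on $\overline T$, so all these stabilizers are trivial and $\widetilde{\overline K} = 1$. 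Hence $\overline K = 1$, i.e.\ $K = \widetilde N$, and combined with the first paragraph this yields $\widetilde N = \cap_{U \in \mathcal U} \widetilde U$.
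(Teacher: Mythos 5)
Your proof is correct and follows essentially the same route as the paper's: identify $T/K$ with the inverse limit $\varprojlim_U T/\widetilde{U}$ to conclude it is a pro-$\C$ tree, then apply Lemma~\ref{equal tilde} to $K/\widetilde{N}$ acting on $T/\widetilde{N}$ and use freeness of that action to force $K/\widetilde{N}=1$. The only difference is that you spell out details (the stabilizer computation and the compactness argument for the inverse-limit identification) that the paper leaves implicit or delegates to a citation.
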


\begin{proof}
Let $M =  \cap_{U \in \mathcal{U}} \widetilde{U}$. Then $T/M$ is the inverse limit of the pro-$\C$ trees $T/ \widetilde{U}$ (see Theorem \ref{quotient tree}), hence $T/ M$ is a pro-$\C$ tree by \cite[Lemma 2.6]{R-Z}. By  Theorem \ref{quotient tree}  $T_0 = T/ \widetilde{N}$ is a pro-$\C$ tree and $J=M/ \widetilde{N}$ acts freely on $T_0$ . But $T_0/ J=T/M$ and so    by Lemma \ref{equal tilde}  we have $J = \widetilde{J}$. Since $J$ acts freely on $T_0$  we have $\widetilde{J} = 1$. Thus $J = 1$.
\end{proof}

\subsection{Finite graphs of pro-$\C$ groups}

In this subsection we recall the definition of a finite graph of pro-$\C$ groups $(\G,\Gamma)$ and its fundamental pro-$\C$ group $\Pi_1(\G, \Gamma)$.
When we say that $(\G,\Gamma)$ is a finite graph of pro-$\C$ groups we mean that it contains the data of the
underlying finite graph, the edge pro-$\C$ groups, the vertex pro-$\C$
groups and the attaching continuous maps. More precisely,

\begin{definition}
Let $\Gamma$ be a connected finite graph. A {\em graph of pro-$\C$ groups} $(\G,\Gamma)$ over
$\Gamma$ consists of  specifying a pro-$\C$ group $\G(m)$ for each $m\in \Gamma$ (i.e., $\G= \bigcupdot_{m\in\Gamma} \G(m)$), and continuous monomorphisms
$\partial_i: \G(e)\longrightarrow \G(d_i(e))$ for each edge
$e\in E(\Gamma)$, $i=1,2$. Moreover,
  \begin{enumerate}
\item A {\em morphism} $(\G,\Gamma) \rightarrow (\H,\Delta)$ of graphs of pro-$\C$ groups
 is a pair
$(\alpha,\bar\alpha)$  of maps, with
 $\alpha:\G\longrightarrow\H$ a continuous map, and  $\bar\alpha:\Gamma\longrightarrow
\Delta$ a morphism of graphs, and such that $\alpha_{\G(m)}:\G(m)\longrightarrow
\H(\bar\alpha(m))$ is a homomorphism for each $m\in \Gamma$ and  which commutes with the appropriate $\partial _i$. Thus the diagram

$$\xymatrix{
\G\ar@{->}^\alpha[rr]\ar@{->}^{\partial_i}[d] & &\H\ar@{->}^{\partial_i}[d]\\
\G\ar@{->}^{\alpha}[rr] & &\H }$$ is commutative.

\item We say that $(\alpha, \bar\alpha)$ is a {\em monomorphism} if both $\alpha,\bar\alpha$ are injective. In this case its image will be called a {\em subgraph of groups} of $(\H, \Delta)$. In other words, a  subgraph of groups of  a graph of pro-$\C$-groups
  $(\G,\Gamma)$ is a graph of groups $(\H,\Delta)$, where $\Delta$ is a
subgraph of $\Gamma$ (i.e., $E(\Delta)\subseteq E(\Gamma)$ and
$V(\Delta)\subseteq V(\Gamma)$, the maps $d_i$ on $\Delta$ are the
restrictions of the maps $d_i$ on $\Gamma$), and for each $m\in\Delta,$
$\H(m)\leq \G(m)$.
\end{enumerate}
\end{definition}

\begin{definition}{\bf Definition of the fundamental pro-$\C$ group.}  \label{def}
In \cite[paragraph (3.3)]{Z-M 89b},  the fundamental pro-$\C$ group
 $G =  \Pi_1(\G,\Gamma)$ is  defined explicitly in terms of generators and relations
 associated to a chosen  maximal subtree $D$. Namely
 \begin{equation} \label{presentation} G=\langle
 \G(v), t_e\mid v\in V(\Gamma), e\in E(\Gamma), t_e=1 \ {\rm for}\  e\in D, \partial_0(g)=t_e\partial_1(g)t_e^{-1},\  {\rm for}\ g\in \G(e)\rangle
\end{equation}
I.e., if one takes the abstract fundamental group $G_0=\pi_1(\G,\Gamma)$
(cf. \cite[\S5.1]{Serre-1980}),
then $$\Pi_1(\G,\Gamma)=\varprojlim_N G_0/N,$$ where $N$ ranges over
all normal subgroups of $G_0$  such that $G/ N \in \C$ and with $N\cap
\G(v)$ open in $\G(v)$ for all $v\in V(\Gamma)$. Note that this last
condition is automatic if $\G(v)$ is finitely generated (as a
pro-$\C$-group) by \cite[Section 4.8]{R-Z2}.
   It is also proved in \cite{Z-M 89b}
that the definition given above is independent on the choice of
the maximal subtree $D$. Then $\Pi_1(\G,\Gamma)$ is the pro-$\C$ completion of $G_0$.  And so if $G_0$ is residually $\C$ and each $\G(v) \in \C$ then $\Pi_1(\G,\Gamma)$ contains open free pro-$\C$ group ( because $G_0$ contains a free abstract normal  subgroup $F$ with $G_0 / F \in \C$).
\end{definition}

The main examples of $\Pi_1(\G,\Gamma)$ are an amalgamated free pro-$\C$
product $G_1\amalg_H G_2$ and an HNN-extension ${\rm HNN}(G,H,t)$ that correspond to the cases of $\Gamma$ having one edge and either two vertices or only one vertex, respectively.

\begin{definition} \label{proper} We call the graph of groups $(\G,\Gamma)$ {\em proper}\footnote{{\em injective} in the terminology of  \cite{R}.} if the natural map
  $\G(v)\to \Pi_1(\G,\Gamma)$ is an embedding for all $v\in V(\Gamma)$.
  \end{definition}

\begin{remark}\label{remark proper} In the pro-$\C$ case a graph of groups $(\G,\Gamma)$ is not always {proper}. However, the
vertex and edge groups can always be replaced by their images  in
$\Pi_1(\G, \Gamma)$ so that $(\G,\Gamma)$ becomes proper and  $\Pi_1(\G,
\Gamma)$ does not change. Thus, throughout the paper, we shall only
consider  proper graphs of pro-$\C$ groups and always identify vertex and edge groups of $(\G,\Gamma)$ with their images in $\Pi_1(\G,\Gamma)$. In particular, all our free amalgamated pro-$\C$ products are proper. \end{remark}

  Note that $(\G,\Gamma)$ is proper if and only if  $\pi_1(\G,\Gamma)$  is  residually  $\C$. In particular, edge and vertex groups will be
subgroups of $\Pi_1(\G,\Gamma)$.

\begin{definition}\label{reduced} A finite graph of  pro-$\C$ groups $(\G,\Gamma)$ is said to be {\em
reduced}, if for every  edge $e$ which is not a loop,
neither $\partial_{1}(e)\colon \G(e)\to \G(d_1(e))$ nor
$\partial_{0}(e):\G(e)\to \G(d_0(e))$ is an isomorphism. \end{definition}
\begin{remark} \label{reduced-2} Any finite graph of  pro-$\C$ groups
can be transformed into a reduced finite graph of pro-$\C$ groups by the
following procedure: if $\{e\}$ is an edge which is not a
loop and for which
one of $\partial_0$, $\partial_1$ is an isomorphism,  we can collapse
$\{e\}$ to a vertex $y$. Let
$\Gamma^\prime$ be the finite graph given by
$V(\Gamma^\prime)=\{y\}\cupdot V(\Gamma)\setminus\{d_0(e),d_1(e)\}$
and $E(\Gamma^\prime)=E(\Gamma)\setminus\{e\}$, and let
$(\G^\prime, \Gamma^\prime)$ denote the finite graph of  groups
based on $\Gamma^\prime$ given by $\G^\prime(y)=\G(d_1(e))$ if
$\partial_{0}(e)$ is an isomorphism, and $\G^\prime(y)=\G(d_0(e))$ if
$\partial_{0}(e)$ is not an isomorphism  and leaving the rest of edge and vertex groups unchanged.

This procedure can be
continued until $\partial_{0}(e), \partial_{1}(e)$ are not surjective for
all edges not defining loops.
 Note that
the  reduction process does not change the
fundamental pro-$\C$ group, i.e., one has a canonical isomorphism
$\Pi_1(\G,\Gamma)\simeq \Pi_1(\G_{red},\Gamma_{red})$.
So, if the pro-$\C$ group $G$ is the fundamental group of a finite
graph of pro-$\C$ groups, we may assume that the finite graph of
pro-$\C$ groups is reduced.
\end{remark}

%\begin{remark}\label{collapsed graph of groups}  The procedure of collapsing in the graph of pro-$\C$ groups $(\G,\Gamma)$ can be generalized using   \cite[Lemma 2.4]{SZ}. If $\Delta$ is a connected subgraph then we can collapse $\Delta$ to a vertex $v$ and put $G(v)=\Pi_1(\G,\Delta)$ leaving the rest of edge and vertex groups unchanged. The fundamental group $\Pi_1(\G_{\Delta},\Gamma/\Delta) =\Pi_1(\G,\Gamma)$. The graph of groups  $(\G_{\Delta},\Gamma/\Delta)$ will be called {\it collapsed}.
%\end{remark}

\begin{para}{\bf Standard (universal) pro-$\C$ tree.}\label{standard}
Associated with the finite graph of pro-$\C$ groups $(\G, \Gamma)$ there is
a corresponding  {\em  standard pro-$\C$ tree}  (or universal covering graph)
  $T=T(G)=\bigcupdot_{m\in \Gamma}
G/\G(m)$ (cf. \cite[Proposition 3.8]{Z-M 89b}).  The vertices of
$T$ are those cosets of the form
$g\G(v)$, with $v\in V(\Gamma)$
and $g\in G$; its edges are the cosets of the form $g\G(e)$, with $e\in
E(\Gamma)$; and the incidence maps of $T$ are given by the formulas:

$$d_0 (g\G(e))= g\G(d_0(e)); \quad  d_1(g\G(e))=gt_e\G(d_1(e)) \ \
(e\in E(\Gamma), t_e=1\hbox{ if }e\in D).  $$

 There is a natural  continuous action of
 $G$ on $T$, and clearly $ G\backslash T= \Gamma$.
Remark also that since $\Gamma$ is finite, $E(T)$ is compact.
\end{para}

\subsection{Homological finiteness properties of pro-$p$ groups}

For a pro-$p$ group $G$ we write $\mathbb{F}_p[[G]]$ for the completed group algebra of $G$ with coeficients in $\mathbb{F}_p$. Thus $$\mathbb{F}_p[[G]] =   \underset{ U} {\varprojlim} \mathbb{F}_p[G/U],$$ where the inverse limit is over all open subgroups $U$ of $G$.

Following \cite{K2} a pro-$p$ group $G$ is of homological type $FP_n$ if there is a projective resolution of $\mathbb{Z}_p$ as a trivial $\mathbb{Z}_p[[G]]$-module, where each projective module in dimension $i \leq n$ is finitely generated. By \cite{K2} this is equivalent with $H_i(G, \mathbb{F}_p)$ is finite for all $i \leq n$. Furthermore,  if $G$
is a finitely generated pro-$p$ group and $N$ its  normal closed subgroup of  $G$ with abelian $G/N$, then by \cite{K2} $G$ is of type $FP_n$ if and only if $H_i(N, \mathbb{F}_p)$ is finitely generated as $\mathbb{F}_p [[G/ N]]$-module for every $i \leq n$.

The property $FP_1$ for a pro-$p$ group is equivalent with finite generation in the category of pro-$p$ groups.
The property $FP_2$ for a pro-$p$ group is equivalent with finite presentation ( in terms of generators and relations) in the category of pro-$p$ groups.

\section{Finitely generated normal subgroups in pro-$\C$ groups acting on trees}

  A vertex $v$ of a graph $\Gamma$ is called a pending vertex if  there is only one edge $e$  incident to $v$.

\begin{lemma} \label{pending fictitious}
  Let $G$ be a pro-$\C$ group acting  on a pro-$\C$ tree $T$ with $|T/G|<\infty$. Then  $G=\Pi_1(\G,T/G)$ is the fundamental group of a graph $(\G,T/G)$ of  pro-$\C$  groups, where vertex and edge groups are stabilizers of the corresponding vertices and edges of a connected transversal of $T/G$ in $T$. Moreover, if the  action is irreducible, then the vertex group of a pending vertex   of $T/G$ can not be equal to the incident edge group. \end{lemma}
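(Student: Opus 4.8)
The first assertion is nothing but the pro-$\C$ version of the Bass--Serre structure theorem, and I would simply quote it: a pro-$\C$ group $G$ acting on a pro-$\C$ tree $T$ with $|T/G|<\infty$ is the fundamental pro-$\C$ group $\Pi_1(\G,T/G)$ of the finite graph of pro-$\C$ groups whose underlying graph is $T/G$ and whose vertex and edge groups are the stabilizers $G_{\tilde m}$ of the vertices and edges $\tilde m$ of a connected transversal of $T/G$ in $T$ (see \cite{Z-M 89b} or \cite[Ch.~4]{R}). By Remark~\ref{reduced-2} I may and do assume $(\G,T/G)$ is reduced, and by Remark~\ref{remark proper} I identify every vertex and edge group with its image in $G$, so that $G_{\tilde e}\subseteq G_{\tilde v}$ whenever $\tilde v$ is an endpoint of $\tilde e$.

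For the ``moreover'' statement I would argue by contradiction. Suppose $v$ is a pending vertex of $T/G$, let $e$ be the unique incident edge (since $v$ is a leaf, $e$ is not a loop and its other endpoint $w\neq v$), and suppose $\G(v)=\G(e)$. Lifting to the transversal, say $\tilde v=d_0(\tilde e)$ with $\tilde w=d_1(\tilde e)$, this means $G_{\tilde v}=G_{\tilde e}$, while $\tilde w$ lies in a $G$-orbit different from that of $\tilde v$. The key observation I would establish is that $\tilde v$ is a \emph{leaf of $T$}: any edge of $T$ incident to $\tilde v$ projects to an edge of $T/G$ incident to $v$, hence equals some $g\tilde e$ because $v$ is pending; and $g\tilde e$ is incident to $\tilde v$ only if $g\in G_{\tilde v}=G_{\tilde e}$, which forces $g\tilde e=\tilde e$. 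Thus $\tilde e$ is the unique edge of $T$ incident to $\tilde v$.

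I would then delete the whole orbit of this leaf, setting $T'=T\setminus(G\tilde v\cup G\tilde e)$. Since $T/G$ is finite, the orbits $G\tilde v$ and $G\tilde e$ are clopen, so $T'$ is a clopen, $G$-invariant subset; moreover $d_0,d_1$ carry $E(T')$ into $V(T')$, because the only edges incident to the leaves $g\tilde v$ are the removed edges $g\tilde e$, so $T'$ is a profinite subgraph. The heart of the argument is to show $T'$ is a pro-$\C$ tree. Writing $C_\bullet(\cdot)$ for the defining complex (\ref{def1}), the clopen splittings $V(T)=V(T')\cupdot G\tilde v$ and $E(T)=E(T')\cupdot G\tilde e$ induce direct-sum decompositions of the free profinite $\mathbb{F}_p$-modules and hence a short exact sequence of complexes
$$0\longrightarrow C_\bullet(T')\longrightarrow C_\bullet(T)\longrightarrow \mathcal{Q}_\bullet\longrightarrow 0,$$
where $\mathcal{Q}_\bullet$ is the two-term complex $\mathbb{F}_p[[G\tilde e]]\xrightarrow{\;\overline\delta\;}\mathbb{F}_p[[G\tilde v]]$ given by $\delta$ followed by projection onto $\mathbb{F}_p[[G\tilde v]]$. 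Since $\delta(g\tilde e)=g\tilde w-g\tilde v$ and $g\tilde w\notin G\tilde v$, one gets $\overline\delta(g\tilde e)=-g\tilde v$; and because $G_{\tilde e}=G_{\tilde v}$ the orbit map $G\tilde e\to G\tilde v$ is a homeomorphism, so $\overline\delta$ is an isomorphism and $\mathcal{Q}_\bullet$ is acyclic. As $T$ is a pro-$\C$ tree, $C_\bullet(T)$ is exact for every $p\in\pi(\C)$, so the long exact homology sequence forces $C_\bullet(T')$ to be exact for every such $p$; that is, $T'$ is a pro-$\C$ tree. But $T'\subsetneq T$ is a nonempty $G$-invariant subtree (it still contains the orbit of $\tilde w$), contradicting the fact that $T$ equals its minimal $G$-invariant subtree.

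The main obstacle is precisely this last verification that deleting an entire $G$-orbit of leaves again yields a pro-$\C$ tree: unlike the discrete case, this is not a finite combinatorial manipulation, so it must be done homologically, and the whole point is that the hypothesis $G_{\tilde v}=G_{\tilde e}$ is exactly what makes the orbit-restricted boundary $\overline\delta$ an isomorphism and $\mathcal{Q}_\bullet$ acyclic. A minor additional point to dispatch separately is the degenerate configuration in which $e$ is a loop or $T/G$ is a single edge; the leaf interpretation of ``pending'' excludes the loop case, and the single-edge case reduces to checking that the surviving vertex orbit is a single fixed point, again contradicting irreducibility.
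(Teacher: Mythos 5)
Your proof is correct, and while it shares the paper's overall skeleton---remove the $G$-orbit of the pending vertex together with its incident edge, exhibit what remains as a proper nonempty $G$-invariant pro-$\C$ subtree, and contradict irreducibility---the crucial step is carried out by a genuinely different method. The paper realizes the complement as $G\Sigma$, where $\Sigma$ is a connected transversal of $\Delta=(T/G)\setminus\{e_0,v_0\}$, and proves it is a subtree by descending to the abstract fundamental group $G^{abs}$ of presentation (\ref{presentation}): discrete Bass--Serre theory shows $G^{abs}\Sigma$ is connected (this is where pendency and $G(v_0)=G(e_0)$ enter), then $G\Sigma$ is connected as the closure of $G^{abs}\Sigma$ in $T$, and one invokes (implicitly) the fact that a connected profinite subgraph of a pro-$\C$ tree is a pro-$\C$ subtree. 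You instead verify the tree condition for $T'=T\setminus(G\tilde v\cup G\tilde e)$ directly and homologically: the clopen orbit decomposition gives the short exact sequence of complexes $0\to C_\bullet(T')\to C_\bullet(T)\to\mathcal{Q}_\bullet\to 0$, and the hypothesis $G_{\tilde v}=G_{\tilde e}$ is used exactly where it must be, namely to make the orbit map $G\tilde e\to G\tilde v$ a homeomorphism, hence $\overline{\delta}$ an isomorphism and $\mathcal{Q}_\bullet$ acyclic, so that exactness of $C_\bullet(T)$ forces exactness of $C_\bullet(T')$ for every $p\in\pi(\C)$. Your preliminary observation that every vertex of $G\tilde v$ is a leaf of $T$ plays the role of the paper's remark that $G^{abs}\Sigma$ stays connected, and is also what makes $T'$ a profinite subgraph at all. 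What each approach buys: yours stays entirely inside the pro-$\C$ category (no discrete approximation, no density/closure step, no appeal to the quoted fact about connected subgraphs), at the cost of a longer computation; the paper's is shorter given the machinery it is allowed to cite. Both arguments in fact concern the same subobject, since $G\Sigma=T'$.

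Two small points. First, delete the sentence in which you ``assume $(\G,T/G)$ is reduced'': reduction changes the underlying graph, so the reduced graph of groups no longer lives over $T/G$, and---read literally---reducedness already asserts what you are trying to prove at pending vertices, so the assumption would beg the question. Fortunately your argument never uses it. Second, your insistence that ``pending'' means a leaf (so the incident edge is not a loop) is not cosmetic but necessary: with loops allowed the ``moreover'' statement is false, as a proper HNN extension with edge group equal to the vertex group acts irreducibly on its standard pro-$\C$ tree with $T/G$ a single loop; the paper's proof makes the same tacit assumption when it deletes $\{e_0,v_0\}$ and treats what remains as nonempty and connected.
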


  \begin{proof}  The first statement is the subject of \cite[Proposition 4.4 combined with 5.4]{ZM90}.  Now if  $v_0$ is a pending vertex of  $\Gamma = T/G$ and  $e_0$ is its incident edge with  $G(e_0)=G(v_0)$ then we can consider a subgraph $\Delta=\Gamma\setminus \{e_0,v_0\} $ of $\Gamma$ and the graph of groups $(\G,\Delta)$ restricted to $\Delta$. Then $G=\Pi_1( \G,\Delta)$.

  Let $\Sigma$ be a connected transversal of $\Delta$ in $T$. Then $G\Sigma$ is a pro-$\C$ subtree of $\Gamma$; indeed using presentation (\ref{presentation}) we see that for the group $G^{abs}$ generated abstractly by $\{\G_v, t_e\mid e\in E(T/G), v\in V(T/G)\}$ one has that $G^{abs}\Sigma$ is connected,   since the corresponding result for discrete groups holds. Indeed $T^{abs} = G^{abs} (\Sigma \cup \{ e_0, v_0 \})$ is an abstract tree, hence connected and since  $v_0$ is a pending vertex and $G(v_0) = G(e_0)$, $G^{abs} \Sigma$ continues being connected.

   Hence   $G\Sigma$ is the  closure of $G^{abs}\Sigma$   in $T$  and  $G\Sigma$ is connected as well. This contradicts irreducibility of the action,  since $G\Sigma$ is a proper pro-$\C$ subtree of $T$.

  \end{proof}

We say that the fundamental group of a graph of pro-$\C$ groups is of dihedral type if $\Gamma$ has only one edge $e$ and  either  $G_e = G_v$ if $e$ is a loop or  $e$ has  vertices $v\neq w$  and $[G_{v} : G_e] = 2 = [G_w : G_e]$. If in a graph of pro-$\C$ groups ( over finite graph) all vertex groups are  in $\C$ we have that its fundamental pro-$\C$ group $G$ is virtually procyclic precisely when  $G$ is of dihedral type.  Indeed using the abstract group $G_0$ from definition \ref{def} we have that $G$ is the pro-$\C$ completion of $G_0$, hence $G$ is virtually procyclic if and only if $G_0$ is virtually cyclic. And $G_0$ is virtually cyclic precisely when the original graph of pro-$\C$ groups is of dihedral type.

\begin{lemma}\label{bound} Let $G=\Pi_1(\G,\Gamma)$ be the fundamental  pro-$\C$ group of a finite reduced graph of finite   $\C$-groups  and let $F$ be   an open  free pro-$\C$ subgroup  of $G$.  Let $T$ be the standard pro-$\C$ tree associated with $G=\Pi_1(\G,\Gamma)$.  Then for any edge $e\in E(T)$ and its incident vertex $v$ one has
\begin{enumerate}
\item[(a)]  $ [G_{v} :  G_e] < 3rank(F)  + 2$;
\item[(b)] If $(\G,\Gamma)$ is not of a dihedral type, then     $ [G : FG_e] < 6  rank(F)$.
\end{enumerate}
\end{lemma}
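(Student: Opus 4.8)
The plan is to realize $G$ as a virtually free pro-$\C$ group and to read off both indices from the finite quotient graph $\Delta := T/F$. Since every vertex and edge group is finite and $F$, being free pro-$\C$, is torsion-free, each stabilizer $F\cap G_x$ is trivial, so $F$ acts freely on $T$. As $F$ is open, $F\backslash G$ is finite of cardinality $m := [G:F]$, and the right action of any stabilizer $G_x$ on $F\backslash G$ has only free orbits (the stabilizer of $Fg$ is $G_x\cap g^{-1}Fg = 1$). Hence the number of $F$-orbits of vertices (resp.\ edges) of $T$ lying over a fixed $\bar v$ (resp.\ $\bar e$) of $\Gamma=T/G$ is $m/|G_v|$ (resp.\ $m/|G_e|$), so $\Delta$ is a finite connected graph with $|V(\Delta)| = \sum_{\bar v} m/|G_v|$ and $|E(\Delta)| = \sum_{\bar e} m/|G_e|$. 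Since $F$ acts freely on the pro-$\C$ tree $T$ with finite quotient, $F$ is free pro-$\C$ of rank equal to the first Betti number of $\Delta$, i.e.
$$rank(F) = |E(\Delta)| - |V(\Delta)| + 1 = 1 + m\Big(\sum_{\bar e}\tfrac1{|G_e|} - \sum_{\bar v}\tfrac1{|G_v|}\Big) = 1 - m\chi,$$
where $\chi = \sum_{\bar v}1/|G_v| - \sum_{\bar e}1/|G_e|$; moreover $[G:FG_e] = m/|G_e|$ is exactly the number of edges of $\Delta$ lying over $\bar e$.

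For (a) I would argue purely locally. Because $F$ acts freely, the quotient map $T\to\Delta$ is locally injective at every vertex, so $\deg_\Delta(\tilde v)=\deg_T(v)$ for the image $\tilde v$ of $v$. Reducedness forces every vertex of $T$ to have degree at least $2$ (a pending vertex of $\Gamma$ contributes an index $[G_v:G_e]\ge 2$, and a vertex lying on a loop already has degree $2$). Feeding $\deg\ge 2$ into the handshake identity $\sum_{\tilde v}\big(\deg_\Delta(\tilde v)-2\big) = 2\big(|E(\Delta)|-|V(\Delta)|\big) = 2\big(rank(F)-1\big)$, every summand is nonnegative, so each $\deg_\Delta(\tilde v)\le 2\,rank(F)$. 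Since $[G_v:G_e]$ counts a subset of the edges of $T$ at $v$, we get $[G_v:G_e]\le \deg_T(v) = \deg_\Delta(\tilde v)\le 2\,rank(F) < 3\,rank(F)+2$; note this uses only reducedness, consistent with (a) carrying no dihedral hypothesis.

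For (b) the estimate $[G:FG_e] = m/|G_e| < 6\,rank(F) = 6(1-m\chi)$ reduces, after dividing by $m$, to the purely combinatorial inequality $-\chi = \sum_{\bar e'}1/|G_{e'}| - \sum_{\bar v}1/|G_v| \ge 1/(6|G_e|)$ for reduced graphs of finite groups not of dihedral type. I would prove this by discharging: split each vertex weight $1/|G_v|$ equally among the $\deg_\Gamma(\bar v)$ germs at $\bar v$, and assign to each edge $\bar e'=(\bar v,\bar w)$ the quantity $c_{\bar e'} = 1/|G_{e'}| - 1/(\deg(\bar v)|G_v|) - 1/(\deg(\bar w)|G_w|)$, so that $\sum_{\bar e'}c_{\bar e'} = -\chi$. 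Reducedness gives $|G_v|,|G_w|\ge 2|G_{e'}|$ for non-loop edges, whence $c_{\bar e'}\ge \frac1{|G_{e'}|}\big(1-\frac1{2\deg\bar v}-\frac1{2\deg\bar w}\big)$, while the non-dihedral loop condition $G_{e'}\subsetneq G_v$ makes loop contributions nonnegative. When $\Gamma$ has at least two edges, connectedness prevents both endpoints of an edge from being pendant, so every bracket is $\ge 1/4$ and all $c_{\bar e'}\ge 0$, giving $-\chi\ge c_{\bar e}\ge 1/(4|G_e|)$. The remaining single-edge cases — the amalgam $A\amalg_B C$ and the HNN loop — I would check by hand; the extremal configuration $[A:B]=3$, $[C:B]=2$ yields exactly $-\chi = 1/(6|G_e|)$, which is where the constant $6$, and its sharpness, come from. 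Combining, $m/|G_e| \le 6m(-\chi) < 6\big(1+m(-\chi)\big) = 6\,rank(F)$.

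The main obstacle I anticipate is the uniform treatment of the Euler-characteristic inequality in part (b): securing the sharp constant $6$ while simultaneously handling pendant vertices, loops, and the degenerate single-edge (amalgam and HNN) cases, together with a clean justification — in the pro-$\C$ rather than the discrete category — that $F$ has rank equal to the Betti number of $T/F$ and that the vertex and edge counts of $T/F$ are $m/|G_v|$ and $m/|G_e|$.
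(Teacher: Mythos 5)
Your proposal is correct, and it takes a genuinely different route from the paper's proof. Both arguments ultimately rest on the same identity $rank(F)-1=[G:F]\bigl(\sum_{\bar e}1/|G_{\bar e}|-\sum_{\bar v}1/|G_{\bar v}|\bigr)$, but you derive it combinatorially from the finite quotient graph $\Delta=T/F$, via orbit counting and the fact that a free action with finite quotient exhibits $F$ as the fundamental pro-$\C$ group of the finite graph $\Delta$ with trivial vertex and edge groups, hence free of rank equal to the Betti number of $\Delta$; this last point, which you flag as your main worry, is precisely \cite[Proposition 4.4]{ZM90}, the same result the paper invokes in Lemma \ref{pending fictitious}, so the pro-$\C$ justification is available off the shelf. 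The paper instead imports fractional Euler characteristics of abstract groups from \cite{Brown} and the multiplicativity $\chi(F)=[G:F]\chi(G)$. From there the proofs diverge: for (a) the paper telescopes the sum along a rooted maximal subtree and does gcd/lcm arithmetic to reach $[G_v:G_e]\le 3\,rank(F)+1$, while your handshake argument (reducedness forces every vertex of $T$, hence of $\Delta$, to have degree at least $2$, and $[G_v:G_e]\le\deg_T(v)=\deg_\Delta(\tilde v)$) gives the cleaner and sharper bound $2\,rank(F)$; for (b) the paper runs a case analysis (tree edges, a single tree edge, loops, and a bouquet case requiring the Schreier index formula), while your discharging argument treats all configurations uniformly --- in particular the paper's bouquet case, where $G_{e'}=G_{v_0}$ for every loop, falls out automatically because such a vertex has degree at least $4$.

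Two small repairs are needed, neither of which affects the strategy. First, the quotient map $T\to\Delta$ is not literally locally injective at $v$: if some $f\in F$ carries $v$ to an adjacent vertex, two distinct edges at $v$ map to a single loop of $\Delta$. The identity $\deg_T(v)=\deg_\Delta(\tilde v)$ nevertheless holds once loops are counted twice (map the germ $(e,i)$ with $d_i(e)=v$ to $(Fe,i)$; freeness gives injectivity for each fixed $i$), and that is also the convention under which your handshake identity $\sum_{\tilde v}\deg_\Delta(\tilde v)=2|E(\Delta)|$ is valid, so nothing breaks. Second, in the multi-edge case of (b) you justify nonnegativity of loop contributions by ``the non-dihedral loop condition $G_{e'}\subsetneq G_v$,'' but non-dihedrality only excludes a lone loop with $G_{e'}=G_v$; when other edges are present a loop may well satisfy $G_{e'}=G_v$. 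The correct justification is that in a connected graph with at least two edges a vertex carrying a loop has degree at least $3$, whence $c_{\bar e'}\ge\frac{1}{|G_{e'}|}\left(1-\frac{2}{3}\right)\ge 0$, and indeed $c_{\bar e'}\ge\frac{1}{3|G_{e'}|}$ if the loop is the edge being bounded. With these one-line fixes your inequality $-\chi\ge\frac{1}{6|G_e|}$, and hence $[G:FG_e]=m/|G_e|\le 6m(-\chi)<6\,rank(F)$, is complete, with the amalgam of indices $2$ and $3$ showing the constant $6$ is sharp, exactly as in the paper's case 2.
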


\begin{proof}  All pro-$\C$ groups  considered in this proof are pro-$\C$ completions $\widehat{A}_{\C}$  of some abstract groups $A$ with the following properties: $A$ has a normal subgroup $M$ of finite cohomological dimension, type $FP_{\infty}$ and $A/ M \in {\C}$. By  \cite[Ch. 9]{Brown}  there is a well-defined ``fractional'' Euler characteristic $\chi(A)$ and we set $\chi(\widehat{A}_{\C}) = \chi(A)$.
The values of such Euler characteristic are rational numbers and are not necessary integers. For an open   subgroup $N$ of $G$ we have that $\chi(N) = [G : N] \chi(G)$.
 Then
$$\chi(G) = \frac{\chi(F)}{[G:F]} = \frac{ 1 - rank(F)}{[G:F]}.$$
Note that $F \cap G_v = 1$ and since every vertex group $G_v$ is finite, $\chi(G_v) = 1/ |G_v|$ and $\chi(G_e) = 1 / | G_e|$ for $v \in V(T), e \in E(T)$.
Then ( for the pro-$p$ case see \cite[Remark 3.6]{HZZ}, the pro-$\C$ case is similar)
$$\frac{ rank(F)-1}{[G:F]}=-\chi(G) = -\sum_{v\in V(\Gamma)}\chi(G_v)+\sum_{e\in E(\Gamma)} \chi(G_e) = \sum_{e\in E(\Gamma)}\frac{1}{| G_{e}|} - \sum_{v\in V(\Gamma)}\frac{1}{| G_{v}|}.$$
Choose $e_0\in E(\Gamma)$ and its incident vertex $v_0\in V(\Gamma)$. To tame the negative term, choose a maximal rooted subtree $D$ of $\Gamma$ with  $v_0$ in $\Gamma$ being the root and direct it away from the root.  Then each vertex $v$ of $\Gamma$ other than the root is the terminal point $t(e)$ of exactly one incoming edge in $D$.

 Consider first the case  $e_0$  is not a loop and   we can choose $D$ to contain  $e_0$.
Thus we can rewrite  the above sum as
\begin{equation}\label{sum}
\frac{ rank(F)-1}{[G:F]}=\frac{1}{| G_{e_0}|}-\frac{1}{| G_{v_0}|}  -\frac{1}{| G_{v_1}|} + (\sum_{e\in E(D)\setminus \{e_0\}}\frac{1}{| G_{e}|}-\frac{1}{| G_{t(e)}|}) +(\sum_{e\in E(\Gamma)\setminus E(D)}\frac{1}{| G_{e}|})
,\end{equation}
 where $v_1$ and $v_0$ are the vertices of $e_0$.
As  summations of the right handside are  non-negative,  $$[G:F](\frac{1}{| G_{e_0}|}-\frac{1}{| G_{v_0}|}-\frac{1}{| G_{v_1}|}) \leq rank(F)-1.$$

Note that for  every $v \in V(\Gamma)$ we have $F \cap G_{v} = 1$, so $G_{v}$ embeds in $G / F$ and $m_{j} |G_{e_0}| = |G_{v_j}|$ divides  $|G/ F|$ for $j = 0$ and $j = 1$. Put $m=MMC(m_0, m_1)$, $a=MCD(m_0,  m_1)$.  Hence $m |G_{e_0}|$ divides $[G:F]$ and we can conclude that
$$  rank(F)-1 \geq  [G:F](\frac{1}{| G_{e_0}|}-
\frac{1}{| G_{v_0}|}-\frac{1}{| G_{v_1}|})  =
 [G:F] (\frac{1}{| G_{e_0}|}-\frac{1}{m_0| G_{e_0}|}-\frac{1}{m_1| G_{e_0}|})  =$$
$$  \frac{[G:F]} {m | G_{e_0}|} (m -\frac{m}{m_0}-\frac{ m}{m_1}) \geq  m -\frac{m}{m_0}-\frac{ m}{m_1}.
$$

Set $m_{j} =  a \widetilde{m}_{j}$. Then
$$
(\widetilde{m}_{0} - 1)  (\widetilde{m}_{1} - 1) -1 = \widetilde{m}_{0}  \widetilde{m}_{1} -  \widetilde{m}_{1} -  \widetilde{m}_{0} \leq a \widetilde{m}_{0}  \widetilde{m}_{1} -  \widetilde{m}_{1} -  \widetilde{m}_{0}    = m  - \frac{m}{m_{0}} - \frac{m }{m_{1}}   \leq rank(F)-1.$$
Then if $\widetilde{m}_{1} > 1$ we conclude that $\widetilde{m}_{0} - 1 \leq
(\widetilde{m}_{0} - 1)  (\widetilde{m}_{1} - 1) \leq rank(F)$. If $\widetilde{m}_{1} = 1$ then $1 \not= m_{1} = a \widetilde{m}_{1} = a $, so
$\widetilde{m}_{0} -1 \leq (a - 1)\widetilde{m}_{0}-1  \leq  a \widetilde{m}_{0}  \widetilde{m}_{1} -  \widetilde{m}_{1} -  \widetilde{m}_{0} \leq rank(F)-1$. Thus in both cases we have
$\widetilde{m}_{0} \leq rank(F)  + 1$, and similarly  $\widetilde{m}_{1} \leq rank(F) +1.$

Finally to prove (a) for edges in $D$ note that
$$
m_{j} = a \widetilde{m}_{j} \leq a \widetilde{m}_{0} \widetilde{m}_{1} \leq rank(F)-1 +
\widetilde{m}_{0} + \widetilde{m}_{1} \leq rank(F)-1 + rank(F)   + 1 + rank(F)   + 1= 3rank(F)  +  1.$$
 Thus we are left to prove (a) for edges $e_0$ that are loops. Then
\begin{equation}\label{sum1}
\frac{ rank(F)-1}{[G:F]}=\frac{1}{| G_{e_0}|}-\frac{1}{| G_{v_0}|}   + (\sum_{e\in E(D)}\frac{1}{| G_{e}|}-\frac{1}{| G_{t(e)}|}) +(\sum_{e\in E(\Gamma)\setminus (E(D) \cup \{ e_0 \})}\frac{1}{| G_{e}|})
,\end{equation}
 together with the fact that the summations are non-negative implies
$$\frac{1}{| G_{e_0}|}-\frac{1}{| G_{v_0}|}\leq \frac{  rank(F)-1}{[G:F]}$$ and
hence $$[G_{v_0}:G_{e_0}] =   \frac{|G_{v_0}|}{|G_{e_0}|}  \leq \frac{  (rank(F)-1)|G_{v_0}|}{[G:F]}+1 \leq rank(F).$$
This finishes case (a).

\medskip
 Now to show (b)  we rewrite the equation (\ref{sum}) as
\begin{equation} \label{new1} rank(F)-1=\frac{[G:F]}{|G_{e_0}|}(1-\frac{| G_{e_0}|}{| G_{v_0}|}  -\frac{| G_{e_0}|}{| G_{v_1}|}) + (\sum_{e\in E(D)\setminus \{e_0\}}\frac{[G:F]}{|G_{e}|}(1-\frac{| G_{e}|}{| G_{ t(e)}|}) +(\sum_{e\in E(\Gamma)\setminus E(D)}\frac{[G:F]}{| G_{e}|})  \end{equation}
and we rewrite (\ref{sum1}) as
\begin{equation} \label{new2} rank(F)-1=\frac{[G:F]}{|G_{e_0}|}(1-\frac{| G_{e_0}|}{| G_{v_0}|}) + (\sum_{e\in E(D)}\frac{[G:F]}{|G_{e}|}(1-\frac{| G_{e}|}{| G_{ t(e)} |}) +(\sum_{e\in E(\Gamma)\setminus (D \cup \{ e_0 \})}\frac{[G:F]}{| G_{e}|})  \end{equation}

Since $(\G,\Gamma)$ is reduced  all terms of summations are positive and so $rank(F)=1$ only if $\Gamma$ is one edge $e_0$ and either $e_0$ is not a loop and
 $[G_{v_0} : G_{e_0}]=2= [G_{v_1}:G_{e_0}]$ or $e_0$ is a loop and $G_{v_0}= G_{e_0}$ (i.e. $G$ is a dihedral type, exluded by the hypothesis).

We consider 3 cases.

1) Suppose that  $e$ is not a loop. Then if $|E(D)| \geq 2$ we can consider (\ref{new1})  for $e \not= e_0$ and assume that $e \in E(D)$.  Then
$$rank(F) - 1 \geq \frac{[G:F]}{|G_{e}|}(1-\frac{| G_{e}|}{| G_{v}|}).$$
 Since $(\G,\Gamma)$ is reduced $G_e \not= G_v$. Then since $|G_e|$ divides $|G_v |$ we conclude that $\frac{|G_v|} {| G_e|} \geq 2$,  hence $(1-\frac{| G_{e}|}{| G_{v}|})^{-1} \leq
 2$, so using  $F \cap G_e = 1$
 $$[G : F G_e] =   \frac{[G:F]}{|G_{e}|}   \leq (rank(F) - 1)  (1-\frac{| G_{e}|}{| G_{v}|})^{-1} \leq 2{(rank(F) - 1)}.$$

 2) Suppose that  $e=e_0$   is not a loop and $E( D) = \{ e_0 \}$.
Then  by (\ref{new1}) we have
$$rank(F) -1 \geq {[G : F G_{e_0}]} (1-\frac{| G_{e_0}|}{| G_{v_0}|}  -\frac{| G_{e_0}|}{| G_{v_1}|}).$$
Since $(\G,\Gamma)$ is reduced and not of dihedral type $ 1-\frac{| G_{e_0}|}{| G_{v_0}|}  -\frac{| G_{e_0}|}{| G_{v_1}|} > 0$.  Then
using that $\frac{| G_{v_0}|} {| G_{e_0}| } $, $\frac{| G_{v_1}|} {| G_{e_0}| } $ are integers, both at least 2 but not simultaneously 2 we conclude that $\frac{| G_{e_0}|}{| G_{v_0}|}  +\frac{| G_{e_0}|}{| G_{v_1}|} \leq \frac{1}{2} + \frac{1} {3} = \frac{5}{6}$, so
$$ [G : F G_e] \leq (rank(F) - 1) (1-\frac{| G_{e_0}|}{| G_{v_0}|}  -\frac{| G_{e_0}|}{| G_{v_1}|})^{-1} \leq 6(rank(F) - 1) $$

3) We are left with the case when  $e$ is a loop. Then we can consider (\ref{new2}) and assume that $e = e_0$. So
$$rank(F) -1 \geq {[G : F G_{e_0}]} (1-\frac{| G_{e_0}|}{| G_{v_0}|}) .$$

If $G_{e_0}\neq G_{v_0}$ then as in Case 1
$$[G : F G_e] \leq (rank(F) - 1)  (1-\frac{| G_{e}|}{| G_{v}|})^{-1} \leq 2{(rank(F) - 1)}.$$

Suppose $G_{e_0}= G_{v_0}$. Suppose that $\Gamma$ posseses an edge $ e_1$  with incident vertex $v_0$ such that if $ e_1$ is a loop then $G_{ e_1} \not= G_{v_0}$. Then from the previous cases one has $$ [G : F G_{e_1}] \leq 6(rank(F) - 1) $$ and so since $G_{ e_1} \leq G_{v_0} = G_{e_0}$ we have $$ [G : F G_{e_0}] = [G : F G_{v_0}] \leq [G : F G_{ e_1}] \leq 6(rank(F) - 1). $$

Thus since $\Gamma$ is connected, we are left with the case where $\Gamma$ is a bouquet and $G_e=G_{v_0}$ for all edges $e$. Since $G$ is not dihedral, we have that $|E(\Gamma)| \geq 2$. Then $G=F_0\rtimes G_{v_0}$ for some not procyclic free pro-$\C$ group $F_0$ and the result follows from the Schrier formula.  Indeed, we can assume $F \leq F_0$, and so
$$[G: F G_{e_0}] = [G : F G_{v_0}] = [F_0 G_{v_0} : F G_{v_0}] = [F_0 : F] = \frac{d(F) - 1}{d(F_0) - 1} \leq d(F) - 1.$$
 \end{proof}

 \begin{proposition}\label{bound1} Let $G$ be a pro-$\C$ group acting irreducibly on  a pro-$\C$ tree with finite vertex stabilizers such that $T/G$ is finite. Suppose $G$ is not virtually  procyclic and let $F$ be   an open free pro-$\C$ subgroup  of $G$. Then for any edge $e\in E(T)$  one has
    $ [G : FG_e] < 6  rank(F)$.
\end{proposition}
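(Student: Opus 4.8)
The plan is to recognize Proposition \ref{bound1} as a coordinate-free restatement of Lemma \ref{bound}(b) and to reduce it to that lemma using the structure theory of this section. The hypotheses—an irreducible action with finite vertex stabilizers and $|T/G|<\infty$—are precisely what is needed to present $G$ as the fundamental group of a finite graph of finite $\C$-groups having $T$ as its standard tree, after which the numerical estimate is already available.

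First I would invoke Lemma \ref{pending fictitious}: since $|T/G|<\infty$, we have $G=\Pi_1(\G,\Gamma)$ with $\Gamma=T/G$, where the vertex and edge groups are the stabilizers along a connected transversal and $T$ is identified with the associated standard pro-$\C$ tree. The vertex stabilizers are finite by hypothesis and edge stabilizers embed in them, so every vertex and edge group is finite; a finite subgroup $H$ of a pro-$\C$ group lies in $\C$, since choosing an open normal $U\triangleleft G$ with $H\cap U=1$ embeds $H$ into $G/U\in\C$ and $\C$ is closed under subgroups. Next I would pass to a reduced graph of groups by Remark \ref{reduced-2}, which leaves $G=\Pi_1(\G,\Gamma)$ unchanged and keeps all (merged) vertex groups among the original ones, hence finite and in $\C$. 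Because $G$ is not virtually procyclic, the discussion preceding Lemma \ref{bound} shows that this reduced graph of groups is not of dihedral type. Lemma \ref{bound}(b) then gives $[G:FG_{e'}]<6\,rank(F)$ for every edge $e'$ of the standard tree $T_{red}$ of the reduced graph of groups.

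It remains to transfer this bound from $T_{red}$ to the given tree $T$, and this is the step I expect to be the main obstacle. The reduction furnishes a $G$-equivariant collapse $c\colon T\to T_{red}$; for an edge $\tilde e$ of $T$ whose image $c(\tilde e)$ is again an edge we have $G_{\tilde e}=G_{c(\tilde e)}$ and the bound is immediate. The delicate case is a collapsed edge $\tilde e$, where by construction $G_{\tilde e}=G_{\tilde v}$ for one incident vertex $\tilde v$. Here I would exploit the identity $[G:FG_{\tilde e}]=[G:F]/|G_{\tilde e}|$, valid because $F$ is free and $G_{\tilde e}$ finite so that $F\cap G_{\tilde e}=1$: it suffices to produce a surviving edge $\tilde e'$ with $G_{\tilde e'}\subseteq G_{\tilde v}$, since then $[G:FG_{\tilde e}]\le[G:FG_{\tilde e'}]<6\,rank(F)$. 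Every edge incident to $\tilde v$ has stabilizer contained in $G_{\tilde v}=G_{\tilde e}$, and irreducibility, via the pending-vertex clause of Lemma \ref{pending fictitious}, forbids $\tilde v$ from being a pending vertex, so $\tilde v$ carries at least one further edge. When $\tilde v$ is absorbed into its cluster the ambient vertex group only enlarges, so an edge at $\tilde v$ of minimal stabilizer can no longer be collapsed towards that cluster and retains its small stabilizer. Making this precise in full generality—following the sequential collapse and showing that a reduced edge with stabilizer contained in $G_{\tilde v}$ genuinely survives, rather than the reduction wandering off to edges at vertices of larger stabilizer—is the crux; I would argue it by tracking the connected system of edges incident to $\tilde v$ whose stabilizer is contained in $G_{\tilde v}$, using that the number of edge orbits is finite so the reduction terminates and that $G$ not being virtually procyclic guarantees $T_{red}$ has an edge.
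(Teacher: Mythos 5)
Your setup is sound and matches the intended route: Lemma~\ref{pending fictitious} presents $G$ as $\Pi_1(\G,T/G)$ with finite vertex and edge groups lying in $\C$, the graph of groups is not of dihedral type because $G$ is not virtually procyclic, Lemma~\ref{bound}(b) bounds the index for the reduced graph, and the transfer is governed by the monotonicity coming from $[G:FG_e]=[G:F]/|G_e|$ (or simply from $G_{e'}\subseteq G_e\Rightarrow FG_{e'}\subseteq FG_e$). However, the step you yourself single out as the crux is a genuine gap, and the heuristic you offer for it does not work as stated. The trouble is exactly the one you name: an edge $e'$ at $\tilde v$ of minimal stabilizer, even if it can never be collapsed \emph{towards} the cluster absorbing $\tilde v$ (and even that requires the cluster group to contain $G_{e'}$ strictly), may perfectly well be collapsed towards its \emph{other} endpoint $x$ when $G_{e'}=G_x$; then $e'$ vanishes, and you must restart the search at $x$, whose incident edges need not touch $\tilde v$ at all. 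Iterating, the ``small'' edge you are chasing wanders through the graph, so no argument confined to ``the connected system of edges incident to $\tilde v$'' can close the proof; the clean statement ``some edge at $\tilde v$ of minimal stabilizer survives the reduction'' is simply false in general.

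The way to close the gap --- and this is what the paper does --- is to avoid performing the whole reduction at once and instead induct on the number of non-reduced edges (call an edge \emph{fictitious} if it is not a loop and its edge group equals an incident vertex group), collapsing a single edge per step. If $e$ is fictitious with $G(e)=G(v)$, then $v$ is not pending (here irreducibility enters, via Lemma~\ref{pending fictitious}), so there is an edge $e'\neq e$ at $v$ with $G(e')\subseteq G(v)$. Collapse only $e$: the resulting graph of groups has the same fundamental group, one fewer fictitious edge, and still no pending fictitious edge, so the induction hypothesis gives $[G:FG(e')]<6\,rank(F)$ for the edge $e'$ of the \emph{once-collapsed} graph; then $[G:FG(e)]=[G:FG(v)]\leq [G:FG(e')]<6\,rank(F)$. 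The point is that $e'$ only needs to exist in the graph obtained after this single collapse --- whether it survives the full reduction is irrelevant, because the induction hypothesis does the tracking for you. Your surviving-edge claim is in fact provable by precisely this induction on the number of collapses, so your outline is fixable, but as written the transfer step remains unproved.
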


\begin{proof} By Lemma \ref{pending fictitious} $G=\Pi_1(\G,\Gamma)$ is the fundamental group of a finite graph of groups  from $\C$ with $G(v)$ not equal to its incident edge group for any pending vertex $v$ of $\Gamma$. So it suffices to prove the  inequality for the edge groups $G(e)$.

We say that an edge $e$  of $\Gamma$ is fictitious if it is not a loop and  $G(e)=G(v)$ for some of its incident vertex $v$. We use induction on the number of fictitious edges in $\Gamma$. If this number is 0, then the result is the subject of Lemma \ref{bound}. Let $e$ be a fictitious edge with vertices $v,w$, i.e. $G(e)=G(v)$ or $G(w)$, say $G(e)=G(v)$. Collapse this edge i.e. substitute the edge with one vertex, whose new vertex group is $G(w)$. Since $v$ is not pending, there exist an edge $e' \not= e$ incident to $v$. By induction hypothesis $ [G : FG(e')] < 6  rank(F)$. Then $ [G : FG(v)] < 6  rank(F)$ and so $ [G : FG(e)]  =   [G : FG(v)] < 6  rank(F)$ as needed.
\end{proof}

One of the main obstacles in the pro-$\C$ vesrion of the Bass-Serre theory is that a finitely generated subgroup acting on a pro-$\C$ tree does not admit an invariant subtree with cofinite action. The next theorem shows that we have cofiniteness of the action if we assume the subgroup to be normal.

\begin{theorem} \label{main} Let $G$ be a pro-$\C$ group acting faithfully and irreducibly  on a pro-$\C$ tree $T$ with $|T/G|<\infty$.
Suppose $G$ has a finitely generated, non-trivial normal pro-$\C$ subgroup $N$.  Then  $T/ N$ is finite  or $G$ is virtually procyclic.
\end{theorem}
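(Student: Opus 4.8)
The plan is to reduce the statement to a finiteness count on the fibres of the projection $T/N \to T/G$. After replacing the graph of groups by a reduced one (Remark \ref{reduced-2}) and using Lemma \ref{pending fictitious} to write $G = \Pi_1(\G,\Gamma)$ with $\Gamma = T/G$, the vertex or edge orbits of $T/N$ lying over a cell $x$ of $\Gamma$ form the double coset space $N\backslash G/G_x$, which, since $N$ is normal, has cardinality $[G : N G_x]$. As $T/G$ is finite, it therefore suffices to prove $[G : N G_x] < \infty$ for each of the finitely many $x$ in a connected transversal.

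First I would show that $N$ fixes no vertex. If it did, then $T^N$ would be a non-empty pro-$\C$ subtree by Theorem \ref{subtree}, and it would be $G$-invariant because $N \triangleleft G$ gives $g T^N = T^{gNg^{-1}} = T^N$ for every $g \in G$; irreducibility would then force $T^N = T$, i.e. $N$ acts trivially, contradicting faithfulness and $N \neq 1$. In particular $N$ is infinite, since a finite pro-$\C$ group fixes a vertex by Corollary \ref{fixed vertex} (apply it with the open normal subgroup $U = 1$). The same normality trick shows that the minimal $N$-invariant subtree is $G$-invariant, hence equal to $T$, so $N$ acts irreducibly on $T$ as well.

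The heart of the argument is to extract a uniform bound on the fibre sizes from Proposition \ref{bound1}. The cleanest route is to place oneself in the case where all vertex stabilizers $G_v$ are finite, so that $G$ is virtually free pro-$\C$; choose an open free pro-$\C$ subgroup $F$ with $F \cap G_v = 1$ and set $r = \mathrm{rank}(F)$. If $r \le 1$ then $F$, and hence $G$, is virtually procyclic and we are in the second alternative. Otherwise $r \ge 2$ and Proposition \ref{bound1} gives $[G : F G_e] < 6r$ for every edge $e$ of $T$. Now $M := N \cap F$ is finitely generated (of finite index in the finitely generated $N$), non-trivial (as $N$ is infinite) and normal in $F$. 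Were $M$ of infinite index in $F$, the cover $T/M \to T/F$ of the finite graph $T/F$, whose first Betti number equals $r \ge 1$, would be infinite, forcing $\mathrm{rank}(M) = \infty$ by the Euler-characteristic/Nielsen--Schreier count underlying Lemma \ref{bound} and contradicting finite generation. Hence $M$, and therefore $N$, is open in $G$, so $[G : N G_x] \le [G : N] < \infty$ and $T/N$ is finite.

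The main obstacle is precisely the reduction to finite vertex stabilizers, since in general the $G_v$ may be infinite and Proposition \ref{bound1} does not apply directly. Passing to the pro-$\C$ tree $S = T/\widetilde N$ (Theorem \ref{quotient tree}), on which $\bar N = N/\widetilde N$ is finitely generated free pro-$\C$ acting freely with $S/\bar N = T/N$, disposes of the stabilizers contributed by $N$ itself, but it leaves $\bar G = G/\widetilde N$ acting with the still possibly infinite stabilizers $G_v/(G_v \cap \widetilde N)$. Bringing these under control, so as to reach the finite-stabilizer situation in which the Euler-characteristic bound is available, is the delicate point, and it is here that Lemma \ref{intersection}, Lemma \ref{bound} and Proposition \ref{bound1} must be combined.
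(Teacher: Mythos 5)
Your proposal is incomplete at exactly the point you flag yourself: you prove the theorem only under the extra hypothesis that all vertex stabilizers $G_v$ are finite (where, once $N\cap F$ is open in $F$ by \cite[Theorem 8.6.5]{R-Z2}, the conclusion is immediate and Proposition \ref{bound1} is not even needed), and you then declare the reduction of the general case to this one ``the delicate point'' without carrying it out. That reduction is not a technical footnote --- it is essentially the whole content of the paper's proof. Your suggested route, passing to $S=T/\widetilde N$, cannot work as stated for the reason you yourself give: the stabilizers $G_v\widetilde N/\widetilde N$ of the induced $G/\widetilde N$-action may still be infinite, so no Euler-characteristic bound applies, and there is no way to return to the finite-stabilizer situation from this single quotient.

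What the paper does instead is approximate by the quotients $G/\widetilde V$ for \emph{open} subgroups $V$ of $G$: since $V/\widetilde V$ is open and acts freely on $T/\widetilde V$, these quotients automatically have finite vertex stabilizers, and Proposition \ref{bound1} gives a bound $[G_V: G_{e,V}N_V]<6\,d(N)$ that is \emph{uniform in $V$} (it depends only on $d(N)$, since the image of $N$ is normal of rank at most $d(N)$); taking the supremum over the inverse system then yields $[G:NG_e]<6\,d(N)$, hence $|T/N|<\infty$. Two genuine difficulties arise that your sketch does not address. First, for this to say anything about $N$ one needs the image of $N$ in $G/\widetilde U$ to be non-trivial for some open $U\supseteq N$, and to get openness of that image one first reduces (via a non-abelian free $F\leq G$ acting freely, Lemma \ref{intersection}, and passage to a core) to the case where $G/\widetilde G$ is free non-abelian pro-$\C$, so that \cite[Theorem 8.6.5]{R-Z2} applies inside $U/\widetilde U$. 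Second --- and this is where your plan has no counterpart at all --- it can happen that $N\subseteq\widetilde U$ for every open $U\supseteq N$, i.e.\ $N=\widetilde N$, in which case $N$ dies in every approximation; the paper handles this by producing $x\in N$ fixing no vertex (Corollary \ref{fixed vertex}), finding an open $V_0\leq N$ containing $\overline{\langle x\rangle}$ with $V_0\neq\widetilde V_0$, and replacing $N$ by the finitely generated $G$-normal subgroup $V_1=\bigcap_{g\in G}V_0^g$ (open in $N$ since finitely generated groups have finitely many subgroups of given index), which does survive and to which the first case applies. Without these two mechanisms --- the uniform bound over the inverse system $G/\widetilde V$, and the $N=\widetilde N$ dichotomy --- your argument establishes only a special case of the statement.
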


\begin{proof}\iffalse{  By Lemma \ref{pending fictitious}  $G$ is  the fundamental pro-$\C$ group
 $  \Pi_1(\G,\Gamma)$  of a graph $(\G,\Gamma = T/G)$ of  pro-$\C$  groups, where vertex and edge groups are stabilizers of the corresponding vertices and edges of a connected transversal of $T/G$ in $T$.

 FALSE

Suppose first that $G$ is virtually procyclic. If some vertex stabilizer $G_v $ is not finite ....

If all vertex stabilisers are finite, then by the remark before Lemma  \ref{bound} we have that $G$ is of dihedral type. If $T/ G$ is a loop $e$ we have that $G_v = G_e$ is in the kernel of the action, hence by faithfulness is trivial and so $G =\mathbb{Z}_{\C}$.  If $T/G$ is a unique edge with vertices $ v \not= w$ we have $[G_v : G_e] = 2 = [G_w : G_e]$, hence $G_e$ is normal in $G$ and so it is in the kernel of the action and by faithfulness should be trivial. Then $G\cong C_2\amalg C_2$ is a free pro-$\C$ product. By ... $G/ \widetilde{V}$ is the fundamental group

If $G$ is virtually procyclic then by \cite[Theorem 3.1]{R-Z} $G$ is either $\Z_p$  or $G\cong C_2\amalg C_2$ and so $N$ is open in this case  or $N$ is finite. In the former $T/ N$ is finite and in the latter $N$ fixes a vertex and by \cite[Theorem 2.12]{Z-M 89b} or \cite[Proposition 4.2.2 (a)]{R} $N$ is in the kernel of the action, contradicting the faithfulness assumption.

FALSE}\fi

Suppose that $V$ is an open subgroup of $G$. Then $G/ \widetilde{V}$ acts on the pro-$\C$ tree $T_V = T/ \widetilde{V}$ and since $V/ \widetilde{V}$ acts freely and  is open in $G/ \widetilde{V}$ we conclude that the vertex stabilizers of the action of $G/ \widetilde{V}$ on $T_V$ are finite $\C$-groups. By Lemma \ref{pending fictitious} $G / \widetilde{V}$ is the fundamental group of a finite graph of groups in $\C$, so is the pro-$\C$ completion of the abstract fundamental group $G_{0,V}$ of the same graph of groups. Note that $G_{0, V}$ is a finitely generated, virtually free abstract group and so $V/ \widetilde{V}$ is the pro-$\C$ completion of a free abstract group, hence is a free pro-$\C$ group.

Assume that $G$ is not virtually procyclic.  Then by \cite[Theorem 3.1]{Z-90} or \cite[Theorem 4.2.10]{R} $G$ contains a non-abelian free pro-$p$ subgroup  $F$ acting freely on $T$.   Let $$\mathcal{V} = \{ V \leq_oG\ | \ F \subseteq V \}.$$ Then
$$\bigcap_{V \in \mathcal{V}} \widetilde{V} = \widetilde{F} = 1$$ (cf. Lemma \ref{intersection}). \iffalse{and so $F\widetilde V/\widetilde V\cong F$ for some such $V$. }\fi

We claim that for some $V \in \mathcal{V}$ we have that $V/ \widetilde{V}$ is a non-abelian free pro-$\C$ group.  Indeed assume $V/ \widetilde{V}$ is  procyclic for all $V \in \mathcal{V}$ (including the possibility $V/ \widetilde{V}=1$); then $\underset{ V \in \mathcal{V}} {\varprojlim} V/ \widetilde{V}$  is procyclic (including the possibility to be trivial). Hence the exactness of $ 1 \to \widetilde{V} \to V \to V/ \widetilde{V}$ yields an exact sequence $ 1 \to  \underset{ V \in \mathcal{V}} {\varprojlim} \widetilde{V} \to  \underset{ V \in \mathcal{V}} {\varprojlim} V \to  \underset{ V \in \mathcal{V}} {\varprojlim} V/ \widetilde{V}$, that can be rewritten as $ 1 \to  \cap_{ V \in \mathcal{V}} \widetilde{V} \to
 F = \cap_{ V \in \mathcal{V}} V \to \underset{ V \in \mathcal{V}} {\varprojlim} V/ \widetilde{V}$. Therefore $F$ is procyclic, a contradiction.

Then for the core $U_0  = \cap_{g \in G} V^g $ of $V$ we also have  that $G/\widetilde U_0$ is not virtually procyclic.  By \cite[Proposition 4.2.3 (a)]{R} or \cite[Proposition 2.1]{Z-90} $U_0$ acts irreducibly on $T$ and since if $T/ (N \cap U_0)$ is finite  $T/ N$ is finite, replacing $G$ with $U_0$ and $N$ with $N\cap U_0$ we may assume that $G/\widetilde G$ is free non-abelian  pro-$\C$.

Let $\mathcal{U}$ be the set of all open subgroups $U$ of $G$ that contain $N$. Hence  $N = \bigcap_{U \in \mathcal{U}} U$.
Fix $U \in \mathcal{U}$ and observe that $U/\widetilde U$ is free non-abelian  pro-$\C$.

1) Suppose $N\not\leq\widetilde{U}$ for some $U\in \mathcal U$.
Note that $N_U = N \widetilde{U}/ \widetilde{U}$ is a  finitely generated, non-trivial normal subgroup of $G/ \widetilde{U}$ that is contained in the free non-abelian pro-$\C$ group $U/ \widetilde{U}$, so by \cite[Theorem 8.6.5]{R-Z2} $N_U$  is open in $G/ \widetilde{U}$ and hence is free pro-$\C$; since $d = d(N)  < \infty$ we get that $N_U$ is of rank at most $d = d(N)$.
 Then for any open normal subgroup $V$ of $G$ such that $N\leq V\leq U$, the normal subgroup $N_V=N\widetilde V/\widetilde V$ has finite index in $G/ \widetilde{V}$.
Denote by $G_{e,V}$ the image of $G_e$ in $G_V = G/ \widetilde{V}$ and observe that it is the edge stabilizer of the image of $e$ in $T/\widetilde V$. We can apply Proposition \ref{bound1}
to $G_V $ acting on $T/ \widetilde{V}$
to conclude that for every edge $e$ one has
$$[G : N G_e] = [G/ \widetilde{N} :  G_e\widetilde N/ \widetilde{N}] = sup_{V \leq U} [G_V:  G_{e,V}N_V] < 6 d(N).$$

Note that $|T / N|<\infty$  is equivalent to $[G : N G_e]< \infty$ for every edge $e$.
Thus $T/N$ is finite in this case.

2) Suppose now  $N \subseteq \bigcap_{U \in \mathcal{U}}\widetilde{U}  = \widetilde{N} \subseteq \bigcap_{U \in \mathcal{U}}{U} = N$ (see Lemma \ref{intersection}), so $N = \widetilde{N}$.

If $N$ stabilizes a vertex then by \cite[Theorem 2.12]{Z-M 89b} or \cite[Proposition 4.2.2 (a)]{R} $N$ is in the kernel of the action, contradicting the faithfulness assumption. Thus this can not occur and by Corollary \ref{fixed vertex}
 there is $x \in N$ that  does not fix any vertex. Then $C = \overline{\langle x \rangle}$ acts freely on $T$. Let $$\mathcal{V} = \{ V \ | \ V \hbox{ an open } \hbox{ subgroup of } N, C \subseteq V \}.$$ Then
if $V = \widetilde{V}$ for every $V \in \mathcal{V}$, we have
$$C = \bigcap_{V \in \mathcal{V}} V = \bigcap_{V \in \mathcal{V}} \widetilde{V} = \widetilde{C} = 1$$ (cf. Lemma \ref{intersection}),
a contradiction.
Hence there is $V_0 \in \mathcal{V}$ such that $V_0\not= \widetilde{V_0}$.

Note that for $g \in G$ we have that $[N : V_0^g] = [N : V_0]$ and since $N$ is finitely generated there are only finitely many open subgroups of a fixed index in $N$, so  $V_1 = \bigcap_{g \in G} V_0^g$ is open in $N$. Thus $V_1$ is a finitely generated normal subgroup of $G$.  Note that if $V_1 = \widetilde{V}_1$ then $V_1 = \widetilde{V}_1 \subseteq \widetilde{V}_0 \subseteq V_0$ and $V_0 / \widetilde{V}_0$ is a non-trivial free group, that is a quotient of the finite group $V_0/ V_1$, a contradiction. Hence $V_1 \not= \widetilde{V}_1$ and by  Case 1 the quotient graph
$T/ V_1$ is finite.  Then $T/ N$ is finite.
\end{proof}

The structure theorem for normal subgroups of free constructions of profinite groups was proved in
  \cite[Theorem B]{Z}. The next corollaries strengthen this result.

\begin{corollary}\label{normal} Let $G=\Pi_1(\G,\Gamma)$ be the fundamental group of a finite graph of  pro-$\C$ groups and $N$   a non-trivial finitely generated normal closed subgroup $N$ of $G$. Then $N$ is the fundamental group of a finite graph of pro-$\C$ groups, whose vertex  and edge groups are conjugate to subgroups of vertex and edge groups of $\G$.
\end{corollary}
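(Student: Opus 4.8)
The plan is to let $N$ act on the standard pro-$\C$ tree $T=T(G)$ associated with $(\G,\Gamma)$ and to recover $N$ as $\Pi_1(\H,T/N)$ by applying Lemma~\ref{pending fictitious} to the subgroup $N$; the crux is to show that $T/N$ is finite, which I would extract from Theorem~\ref{main}. Before doing so I would invoke Remark~\ref{reduced-2} to assume $(\G,\Gamma)$ is reduced, and dispose of the degenerate case in which $\Gamma$ is a single vertex (then $G=\G(v)$ and $N\le\G(v)$ is already the fundamental group of a one-vertex graph of groups). So I may assume $\Gamma$ has at least one edge.

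The difficulty is that Theorem~\ref{main} requires the action to be \emph{faithful and irreducible}, whereas the $G$-action on $T$ need be neither. For irreducibility I would use that for a reduced graph of groups with an edge the standard tree coincides with the minimal $G$-invariant subtree, so $G$ acts irreducibly on $T$. For faithfulness let $K$ be the kernel of the action. Since $K$ fixes the base edge $1\cdot\G(e)$, whose stabilizer in $G$ is exactly $\G(e)$, we obtain $K\le\G(e)$. If $N\subseteq K$ then $N\le\G(e)$, and $N$ is at once the fundamental group of a one-vertex graph of groups with vertex group $N\le\G(e)$, which already gives the conclusion. Otherwise $\bar N:=NK/K$ is a non-trivial, finitely generated, normal subgroup of $\bar G:=G/K$, and $\bar G$ acts faithfully and irreducibly on $T$ with $|T/\bar G|=|\Gamma|<\infty$ (the $\bar G$-orbits coincide with the $G$-orbits).

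Next I would apply Theorem~\ref{main} to $\bar G$ acting on $T$ with the normal subgroup $\bar N$. Because $K$ acts trivially, the $\bar N$-orbits coincide with the $N$-orbits, so $T/N=T/\bar N$. In the generic branch of the dichotomy of Theorem~\ref{main}, $T/\bar N=T/N$ is finite; Lemma~\ref{pending fictitious} applied to $N$ acting on $T$ then presents $N=\Pi_1(\H,T/N)$ over the finite graph $T/N$, with vertex and edge groups the $N$-stabilizers $N_x=N\cap G_x$. As each $G_x$ is a conjugate $g\G(m)g^{-1}$ of a vertex or edge group of $\G$, the groups $N\cap G_x$ are conjugate to subgroups of the vertex and edge groups of $\G$, which is exactly the asserted structure.

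The remaining, and main, obstacle is the exceptional branch in which $\bar G$ is virtually procyclic: here $T/N$ may be genuinely infinite and the tree method breaks down. In that situation I would argue directly from the structure of virtually procyclic pro-$\C$ groups as fundamental groups of finite graphs of finite groups of dihedral type (cf.\ \cite[Theorem 3.1]{R-Z} and \cite[Theorem B]{Z}), together with the fact that the torsion of $G$, being torsion acting on $T$, fixes vertices and hence lies in conjugates of the vertex groups $\G(v)$ by \cite[Proposition 4.2.2 (a)]{R}. I expect this virtually procyclic analysis -- and, secondarily, the verification that the minimal $G$-invariant subtree really yields an irreducible action in the pro-$\C$ category -- to be the most delicate points of the argument.
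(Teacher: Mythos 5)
Your main branch is essentially the paper's own proof: pass to the standard tree $T$, dispose of the case $N\le K$ where $K$ is the kernel of the action, pass to the faithful quotient, invoke Theorem \ref{main} to get $|T/N|<\infty$, and then recover $N=\Pi_1(\H,T/N)$ with stabilizers as vertex and edge groups via \cite[Proposition 4.4]{ZM90} (the first assertion of Lemma \ref{pending fictitious}). In fact you are more careful than the paper on two points it leaves implicit: you first make $(\G,\Gamma)$ reduced via Remark \ref{reduced-2}, which the citation of \cite[Proposition 4.2.3 (a)]{R} for irreducibility really does require (for a non-reduced graph such as $A\amalg_A A$ the standard tree is not minimal), and you correctly observe that the dichotomy of Theorem \ref{main} concerns the faithful quotient $\bar G=G/K$ rather than $G$ itself.

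The genuine gap is your exceptional branch, which you leave as an expectation rather than an argument. The comparison with the paper here is instructive. The paper splits off the case ``$G$ virtually procyclic'' at the very start, asserting that $G$ is then of dihedral type and that ``the statement is obvious''; it then applies Theorem \ref{main} without addressing the possibility that $\bar G$ is virtually procyclic while $G$ is not --- a possibility that is real: with $\G(v)=\G(w)=A\times C_2$ and $\G(e)=A$ one gets $G=A\times(C_2\amalg C_2)$, which is not virtually procyclic, yet $\bar G=C_2\amalg C_2$ is. So your placement of the dichotomy is the accurate one, but neither you nor the paper closes this branch by an actual proof, and your suspicion that it is delicate is well founded. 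If $\pi(\C)$ contains at least two primes, then inside $\bar G=C_2\amalg C_2\cong\bigl(\prod_{p\in\pi(\C)}\mathbb{Z}_p\bigr)\rtimes C_2$ the $2$-primary part $N\cong\mathbb{Z}_2$ of the procyclic index-$2$ subgroup is a non-trivial, finitely generated, closed normal subgroup with $T/N$ infinite, so the tree method genuinely breaks down; worse, such an $N$ is torsion-free, hence meets every conjugate of the (finite) vertex groups trivially, so the conclusion of Corollary \ref{normal} would force $N$ to be a free pro-$\C$ group of finite rank, which $\mathbb{Z}_2$ is not when $|\pi(\C)|\ge 2$. In other words, the exceptional case is ``obvious'' only in settings like the pro-$p$ one, where every non-trivial closed subgroup of $\mathbb{Z}_p$ is open; for general $\C$ your unresolved case cannot be repaired as stated, and the paper's one-line disposal of it is too quick. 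To match the paper's proof you would only need to add its opening sentence; to obtain a complete and correct argument, both you and the paper would need to reformulate or restrict the virtually procyclic (dihedral) case.
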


\begin{proof} If $G$ is virtually procyclic, it is of dihedral type and the statement is obvious. Assume $G$ is not virtually procyclic.  Let $T(G)$ be the standard pro-$\C$ tree on which $G$ acts. If $N$ is in the kernel of the action then there is nothing to prove, so we assume that $N$ is not in the kernel of the action. The action is irreducible (\cite[Proposition 4.2.3 (a)]{R} or \cite[Proposition 2.1]{Z-90}) and killing the kernel of the action we may assume it is faithful.  Then   by Theorem \ref{main} $T(G)/N$ is finite and so by \cite[Proposition 4.4]{ZM90}   $N$ is the fundamental group of a finite graph of pro-$\C$ groups over $T(G)/N$ in the desired form.
\end{proof}

\begin{corollary}\label{free product} Let $G=\coprod_{x\in X} G_x$  be a free pro-$\C$ product and $N$ a non-trivial finitely generated normal  closed subgroup of $G$. Suppose $G$ is not virtually procyclic. Then $N$ is open in $G$ of index $[G:N] < 6 d(N)$. \end{corollary}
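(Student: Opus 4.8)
The plan is to let $G=\coprod_{x\in X}G_x$ act on the standard pro-$\C$ tree $T$ of the free product, i.e. on the tree of the graph of pro-$\C$ groups whose vertex groups are the $G_x$ and whose edge groups are all trivial. Then $T/G$ is finite, the vertex stabilizers are the conjugates of the $G_x$, and every edge stabilizer is trivial. The action is faithful, since its kernel is contained in the intersection of all conjugates of the vertex groups, which is trivial in a (proper) free product; and it is irreducible by \cite[Proposition 4.2.3(a)]{R} (or \cite[Proposition 2.1]{Z-90}), as $G$ is not virtually procyclic. Since $N\neq 1$ is not contained in the trivial kernel, Theorem \ref{main} applies and yields that $T/N$ is finite. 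As recalled in the proof of Theorem \ref{main}, finiteness of $T/N$ is equivalent to $[G:NG_e]<\infty$ for every edge $e$; here $G_e=1$, so $[G:N]<\infty$ and $N$ is open.

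To bound the index I would first descend to a finite-type quotient, exactly as in the proof of Theorem \ref{main}. Since $N$ is open in the non-virtually-procyclic group $G$, it is itself not virtually procyclic and hence contains a non-abelian free pro-$\C$ subgroup $F_N$ acting freely on $T$ (\cite[Theorem 4.2.10]{R} or \cite[Theorem 3.1]{Z-90}). Running the argument of Theorem \ref{main} with $\mathcal V=\{V\leq_o G : F_N\subseteq V\subseteq N\}$ and using Lemma \ref{intersection} (so that $\bigcap_{V}\widetilde V=\widetilde{F_N}=1$), I obtain some $V\in\mathcal V$ with $V/\widetilde V$ a non-abelian free pro-$\C$ group; passing to the core $U_0=\bigcap_{g}V^g\trianglelefteq G$ gives an open normal $U_0\subseteq N$ for which $G^\ast:=G/\widetilde{U_0}$ is not virtually procyclic. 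Then $G^\ast$ acts irreducibly on $T^\ast:=T/\widetilde{U_0}$ with finite vertex stabilizers, trivial edge stabilizers and $T^\ast/G^\ast$ finite, and $N^\ast:=N/\widetilde{U_0}$ (note $\widetilde{U_0}\subseteq U_0\subseteq N$) is open in $G^\ast$ with $[G:N]=[G^\ast:N^\ast]$ and $d(N^\ast)\leq d(N)$.

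Now $N^\ast$ is the fundamental group of a finite graph of finite $\C$-groups with trivial edge groups (cf. Corollary \ref{normal}), i.e. a free pro-$\C$ product $N^\ast=H_1\amalg\cdots\amalg H_m\amalg F_r$, so the fractional Euler characteristic of the proof of Lemma \ref{bound} is defined on it. Fix an open free pro-$\C$ subgroup $F^\ast\leq N^\ast$ (hence open free in $G^\ast$). Since the edge groups of $G^\ast$ are trivial, Proposition \ref{bound1} gives $[G^\ast:F^\ast]<6\,\mathrm{rank}(F^\ast)$, and with $\mathrm{rank}(F^\ast)=1-\chi(F^\ast)=1+[N^\ast:F^\ast]\,(-\chi(N^\ast))$ this reads
\[
[G:N]=[G^\ast:N^\ast]=\frac{[G^\ast:F^\ast]}{[N^\ast:F^\ast]}<6\left(\frac{1}{[N^\ast:F^\ast]}-\chi(N^\ast)\right).
\]
Letting $[N^\ast:F^\ast]\to\infty$ (possible as $N^\ast$ is infinite) gives $[G:N]\leq 6\,(-\chi(N^\ast))$. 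Finally
\[
-\chi(N^\ast)=m+r-1-\sum_{i=1}^m|H_i|^{-1}<m+r\leq d(N^\ast)\leq d(N),
\]
the inequality $m+r\leq d(N^\ast)$ reflecting that each factor of the free pro-$\C$ product contributes at least one generator to $N^\ast$. Hence $[G:N]<6\,d(N)$, as required.

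The main obstacle is the passage through Proposition \ref{bound1}. In Theorem \ref{main} the relevant quotient of $N$ sits inside a free pro-$\C$ group and may therefore be used directly as the open free subgroup in Proposition \ref{bound1}; here, however, $N$ can meet conjugates of the factors $G_x$ and so need not act freely on $T$, whence $N^\ast$ is only virtually free. One must therefore interpose an auxiliary open free subgroup $F^\ast\leq N^\ast$ and recover the sharp constant by comparing the Euler characteristic $-\chi(N^\ast)$ with the generating rank $d(N^\ast)$. Securing that the quotient $G^\ast$ remains non-virtually-procyclic — so that Proposition \ref{bound1} is applicable at all — is precisely what forces the preliminary construction of $U_0$ out of a non-abelian free subgroup of $N$.
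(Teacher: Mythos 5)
Your proof of openness (via Theorem \ref{main} applied to the standard tree of the free product) is fine, but only under an assumption you never state: that $X$ is finite. You need $|T/G|<\infty$ to invoke Theorem \ref{main}, and that fails for a free pro-$\C$ product over an infinite (profinite) index set. The paper's proof is built precisely to avoid this: it writes $G=\varprojlim_i G_i$ with $G_i=\coprod_{x_i\in X_i}G_{x_i}$, where the $X_i$ are finite \emph{and} the factors $G_{x_i}$ are finite $\C$-groups, and argues at each finite level; the explicit phrase ``$X_i$ are finite'' is there because $X$ itself need not be. So, as written, you prove a special case of the statement.

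The more substantive gap is the final inequality $m+r\le d(N^*)$, justified by ``each factor contributes at least one generator.'' That is a Grushko--Neumann-type claim, and Grushko--Neumann is not available in the pro-$\C$ category: additivity of $d$ over free pro-$\C$ factors fails in general (already for finite factors of coprime orders), and even the weaker bound ``$d\ge$ number of factors'' has no citable proof here. The homological rescue that works pro-$p$ ($d\ge\dim_{\F_p}H_1(-,\F_p)$, additive over free factors) sees nothing of a factor $H_i$ that is a perfect finite $\C$-group. Fortunately your argument only needs $-\chi(N^*)<d(N^*)$, and that follows from the Schreier index formula, valid for open subgroups of pro-$\C$ groups: for your open free $F^*\le N^*$ of index $n$ one has $\mathrm{rank}(F^*)=1-n\chi(N^*)$ and $\mathrm{rank}(F^*)=d(F^*)\le n\bigl(d(N^*)-1\bigr)+1$, whence $-\chi(N^*)\le d(N^*)-1$. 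Plugging this into your displayed inequality even kills the limit step, since $[G:N]<6\bigl(\tfrac1n-\chi(N^*)\bigr)\le 6\,d(N^*)\le 6\,d(N)$ directly.

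For comparison, the paper avoids both the Euler-characteristic bookkeeping and any Grushko-type input: at each finite level it quotients by $\widetilde{N_i}$ (generated by the vertex stabilizers of the image of $N$), so the quotient is again a free product $\coprod_{x_i}G_{x_i}/(G_{x_i}\cap N_i)$ acting on its standard tree, and the image of $N$ now acts \emph{freely}, hence is a free pro-$\C$ normal subgroup of rank at most $d(N)$; Proposition \ref{bound1} is applied with $F$ equal to that image, and $6\,d(N)$ falls out at once. Your detour through the merely virtually free $N^*$ (killing $\widetilde{U_0}$ rather than $\widetilde{N}$) is exactly what forces the rank comparison that got you into trouble.
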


\begin{proof}  Decompose $G$ as an inverse limit of finite free pro-$\C$ products $G=\varprojlim_i G_i$, $G_i=\coprod_{x_i\in X_i} G_{x_i}$,  where  $X_i$ are finite and each $G_{x_i}$ is a   finite  $\C$-group. Note that $G_i$ acts on its standard pro-$\C$ tree $T_i$ irreducibly   and its edge stabilizers are trivial.  Let $N_i$ be the image of $N$ in $G_i$. Then $G_i / \widetilde{N}_i$ acts irreducibly on $T_i/ \widetilde{N}_i$, actually $G_i/ \widetilde{N}_i$ is $\coprod_{x_i \in X_i} G_{x_i}/ (G_{x_i} \cap N_i)$ and $T_i/ \widetilde{N}_i$ is the corresponding standard pro-$\C$ tree.  Hence the index of $N_i$ in $G_i$ is less then $6d(N)$ by Proposition \ref{bound1}. Therefore $[G:N] < 6 d(N)$.
\end{proof}

\begin{corollary} Let $G=\Pi_1(\G,\Gamma)$ be the fundamental group of a finite graph of  pro-$\C$ groups and $H$ a  closed subgroup of $G$ possesing    a non-trivial finitely generated  closed normal subgroup $N$ of $G$ not contained in a vertex group $\G(v)$ of $(\G,\Gamma)$.  Then $H$ is the fundamental group of a finite graph of pro-$\C$ groups, whose vertex and edge  groups are conjugate to  subgroups of vertex and edge groups of $\G$.
\end{corollary}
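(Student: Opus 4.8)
The plan is to push the cofiniteness of the $N$-action on the standard tree down to $H$ and then apply Lemma \ref{pending fictitious} directly to $H$. Let $T = T(G)$ be the standard pro-$\C$ tree associated with $G = \Pi_1(\G,\Gamma)$; both $G$ and its closed subgroup $H$ (which contains $N$) act on $T$. First I would check that $N$ is not contained in the kernel of the $G$-action. The vertex stabilizers of $T$ are the conjugates $g\G(v)g^{-1}$, and since $N$ is normal one has $N \leq g\G(v)g^{-1}$ if and only if $N = g^{-1}Ng \leq \G(v)$; as $N$ lies in no vertex group $\G(v)$ by hypothesis, it lies in no vertex stabilizer, and a fortiori not in the kernel $\bigcap_{g,v} g\G(v)g^{-1}$ of the action. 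This places us in the non-degenerate case of Corollary \ref{normal}, whose proof (via Theorem \ref{main}) establishes precisely that $T/N$ is finite once $N$ avoids the kernel.

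The key step is then immediate: because $N \leq H$, the orbit graph $T/H$ is a continuous graph quotient of the finite graph $T/N$, so $|T/H| < \infty$. Thus $H$ is a pro-$\C$ group acting on the pro-$\C$ tree $T$ with finite quotient, and the first assertion of Lemma \ref{pending fictitious} applies verbatim with $H$ in place of $G$: it gives $H = \Pi_1(\H, T/H)$, the fundamental group of a finite graph of pro-$\C$ groups whose vertex and edge groups are the $H$-stabilizers of the vertices and edges of a connected transversal of $T/H$ in $T$.

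It remains to identify those stabilizers. Each is of the form $H \cap g\G(v)g^{-1}$ for a vertex (resp. $H \cap g\G(e)g^{-1}$ for an edge), hence a subgroup of a conjugate of $\G(v)$ (resp. $\G(e)$); conjugating by $g^{-1}$ shows $g^{-1}(H\cap g\G(v)g^{-1})g \leq \G(v)$, so each vertex and edge group of $H$ is conjugate to a subgroup of a vertex, resp. edge, group of $\G$, which is exactly the desired conclusion. The only content beyond Corollary \ref{normal} is the descent $T/N \to T/H$; the one point that must be handled with care is that the hypothesis ``$N$ not contained in a vertex group'' is exactly what forces $T/N$ to be finite (equivalently, what prevents $N$ from being swallowed by the kernel of the action), and hence is what makes the action of $H$ on $T$ cofinite in the first place.
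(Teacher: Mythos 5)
Your route is the paper's route: use normality of $N$ to see that it avoids the kernel of the action on the standard tree $T$, get $|T/N|<\infty$ from Theorem \ref{main} (through Corollary \ref{normal}), descend to $|T/H|<\infty$ via the surjection $T/N\to T/H$ coming from $N\leq H$, and then apply the first assertion of Lemma \ref{pending fictitious} (i.e.\ \cite[Proposition 4.4]{ZM90}) to $H$ acting on $T$, identifying the resulting vertex and edge groups as $H\cap g\G(m)g^{-1}$. This is exactly the paper's argument, and your explicit verification that ``not contained in a vertex group'' plus normality forces ``not contained in any vertex stabilizer'' is a point the paper leaves implicit.

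There is, however, a genuine gap. Theorem \ref{main} does \emph{not} assert that $T/N$ is finite whenever $N$ is a non-trivial finitely generated normal subgroup; its conclusion is the disjunction ``$T/N$ is finite \emph{or} $G$ is virtually procyclic''. Accordingly, the proof of Corollary \ref{normal} first disposes of the virtually procyclic (dihedral type) case and only then invokes Theorem \ref{main}, and the paper's proof of the present corollary likewise opens with ``as in the proof of Corollary \ref{normal} we may assume that $G$ is not virtually procyclic''. Your proposal skips this case, and your claim that Corollary \ref{normal} establishes ``$T/N$ is finite once $N$ avoids the kernel'' is not what is proved there. The case is not vacuous, and your key step genuinely fails in it: if $\pi(\C)\supseteq\{2,p\}$ and $G=C_2\amalg C_2\cong \widehat{\mathbb{Z}}_{\C}\rtimes C_2$ (one edge, two vertex groups $C_2$, trivial edge group), then $N=\mathbb{Z}_p\leq \widehat{\mathbb{Z}}_{\C}$ is a non-trivial, finitely generated, closed normal subgroup of $G$ contained in no vertex group, the kernel of the action is trivial, and yet $T/N$ is infinite, since $E(T)\cong G$ as a $G$-set and $[G:N]=\infty$. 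So to complete the argument you must, as the paper does, split off the virtually procyclic case before invoking Theorem \ref{main} (how satisfactorily that residual case is actually handled is an issue inherited from Corollary \ref{normal}, but a complete write-up cannot be silent about it).
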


\begin{proof}  As in the proof of Corollary \ref{normal} we may assume that $G$ is not virtually procyclic. Let $T(G)$ be the standard pro-$\C$ tree on which $G$ acts.  The action is irreducible and killing the kernel of the action we may assume it is faithful.  Then   by Theorem \ref{main} $T(G)/N$ is finite and therefore so is $T(G)/H$. Then  by \cite[Proposition 4.4]{ZM90}   $H$ is the fundamental group of a finite graph of pro-$\C$ groups over $T(G)/H$ in the desired form.

\end{proof}

\begin{corollary} Let $G=G_1\amalg_A G_2$ be a  free pro-$p$ product of coherent pro-$p$ groups amalgamating  analytic pro-$p$ subgroup $A$. Let $N$ be a finitely generated normal closed subgroup of $G$. Then $N$ is finitely presented.
\end{corollary}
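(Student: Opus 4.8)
The plan is to combine the structure theorem for $N$ furnished by Corollary \ref{normal} with a Mayer--Vietoris argument in $\mathbb{F}_p$-homology, propagating finiteness from the edge groups to the vertex groups and finally to $N$ itself. If $N=1$ there is nothing to prove, so assume $N\neq 1$. Writing $G=G_1\amalg_A G_2=\Pi_1(\G,\Gamma)$ with $\Gamma$ a single edge and two vertices, Corollary \ref{normal} presents $N=\Pi_1(\mathcal{N},\Delta)$ as the fundamental pro-$p$ group of a \emph{finite} graph of pro-$p$ groups whose edge groups $\mathcal{N}(e)$ are conjugate to closed subgroups of $A$ and whose vertex groups $\mathcal{N}(v)$ are conjugate to closed subgroups of $G_1$ or $G_2$.

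First I would record the homological input and set up the exact sequence. Since $A$ is analytic, every closed subgroup of $A$ is again analytic and hence of type $FP_\infty$; thus each $H_i(\mathcal{N}(e),\mathbb{F}_p)$ is finite for all $i$. The standard pro-$p$ tree $S=T(N)$ on which $N$ acts yields, via the exactness of $(\ref{def1})$ for $S$, a short exact sequence of $\mathbb{F}_p[[N]]$-modules
$$0\to \mathbb{F}_p[[E^*(S),*]]\to \mathbb{F}_p[[V(S)]]\to \mathbb{F}_p\to 0,$$
where the orbit decomposition gives $\mathbb{F}_p[[V(S)]]\cong\bigoplus_{v\in V(\Delta)}\mathbb{F}_p[[N/\mathcal{N}(v)]]$ and $\mathbb{F}_p[[E^*(S),*]]\cong\bigoplus_{e\in E(\Delta)}\mathbb{F}_p[[N/\mathcal{N}(e)]]$, both finite direct sums because $\Delta$ is finite. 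Applying $\mathrm{Tor}^{\mathbb{F}_p[[N]]}_*(\mathbb{F}_p,-)$ to this sequence and using Shapiro's lemma ($\mathrm{Tor}^{\mathbb{F}_p[[N]]}_i(\mathbb{F}_p,\mathbb{F}_p[[N/\mathcal{N}(x)]])\cong H_i(\mathcal{N}(x),\mathbb{F}_p)$) produces the Mayer--Vietoris long exact sequence
$$\cdots\to\bigoplus_{e}H_i(\mathcal{N}(e),\mathbb{F}_p)\to\bigoplus_{v}H_i(\mathcal{N}(v),\mathbb{F}_p)\to H_i(N,\mathbb{F}_p)\to\bigoplus_{e}H_{i-1}(\mathcal{N}(e),\mathbb{F}_p)\to\cdots.$$

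Next I would run the bootstrap. In degree $i=1$ the segment $\bigoplus_{e}H_1(\mathcal{N}(e),\mathbb{F}_p)\to\bigoplus_{v}H_1(\mathcal{N}(v),\mathbb{F}_p)\to H_1(N,\mathbb{F}_p)$ has finite left-hand term (edge groups are $FP_\infty$) and finite right-hand term ($N$ is finitely generated, hence of type $FP_1$); therefore the middle term is finite and every $\mathcal{N}(v)$ is finitely generated. Being a finitely generated closed subgroup of the coherent group $G_1$ or $G_2$, each $\mathcal{N}(v)$ is then finitely presented, i.e. of type $FP_2$, so $H_2(\mathcal{N}(v),\mathbb{F}_p)$ is finite. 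Finally the degree $i=2$ segment $\bigoplus_{v}H_2(\mathcal{N}(v),\mathbb{F}_p)\to H_2(N,\mathbb{F}_p)\to\bigoplus_{e}H_1(\mathcal{N}(e),\mathbb{F}_p)$ has both outer terms finite, whence $H_2(N,\mathbb{F}_p)$ is finite. Together with the finiteness of $H_1(N,\mathbb{F}_p)$ this shows $N$ is of type $FP_2$, which for pro-$p$ groups is exactly finite presentation.

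The main obstacle I expect is the step establishing finite generation of the vertex groups $\mathcal{N}(v)$: the decomposition supplied by Corollary \ref{normal} does not a priori have finitely generated vertex groups, and without this one cannot invoke coherence. The key point is that the analyticity of $A$ forces the edge groups to be $FP_\infty$, and this is precisely what makes the $i=1$ portion of the Mayer--Vietoris sequence squeeze $\bigoplus_{v}H_1(\mathcal{N}(v),\mathbb{F}_p)$ between two finite groups; once finite generation of the $\mathcal{N}(v)$ is secured, coherence and a single further application of the sequence in degree $2$ finish the argument.
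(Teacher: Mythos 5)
Your proof is correct and follows essentially the same route as the paper's: Corollary \ref{normal} to obtain the finite graph-of-groups decomposition of $N$, the analyticity (Noetherian property) of $A$ to make the edge groups homologically finite, a Mayer--Vietoris squeeze in degree $1$ to get finite generation of the vertex groups, and coherence of $G_1,G_2$ to upgrade those to finite presentation. The only (harmless) divergence is at the very end: the paper assembles an explicit finite presentation of $N$ directly from the graph-of-groups presentation (\ref{presentation}), whereas you run Mayer--Vietoris once more in degree $2$ to conclude that $H_2(N,\mathbb{F}_p)$ is finite and then invoke the equivalence between $FP_2$ and finite presentation for pro-$p$ groups --- both conclusions are valid.
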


\begin{proof}  By Corollary \ref{normal}  $N$ is the fundamental group of a finite graph of pro-$p$ groups and since $N\cap A^g$ is finitely generated for any $g\in G$ and  $N$ is finitely generated,  by Mayer-Vietoris argument in homology   $N\cap G_i^g$ is finitely generated and hence is finitely presented  for any $g \in G$. Thus we get a finite presentation for $N$ (see (\ref{presentation})).
\end{proof}

\begin{remark} We do not know whether a condition of normality for $N$ can be dropped.

\end{remark}

\section{Normal subgroups of pro-$\C$ RAAGS}

 For a pro-$\C$ group $G$ and a closed subgroup $K$ we denote by $Core_G(K) = \cap_{g \in G} K^g$ the core of $K$ in $G$.

\begin{lemma} \label{core} Let $G = G_1 \coprod_B G_2$ be a pro-$\C$ product with $B \not= G_2$. Then $Core_G(G_1) = Core_G(B)$.
\end{lemma}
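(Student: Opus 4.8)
The plan is to prove the two inclusions separately. The inclusion $Core_G(B)\subseteq Core_G(G_1)$ is immediate: since $B\subseteq G_1$ we have $gBg^{-1}\subseteq gG_1g^{-1}$ for every $g\in G$, whence $\bigcap_g gBg^{-1}\subseteq\bigcap_g gG_1g^{-1}$. For the reverse inclusion I would set $N=Core_G(G_1)$, a closed normal subgroup of $G$ contained in $G_1$, and reduce the problem to showing $N\subseteq B$: once this is known, $N$ is a normal subgroup of $G$ lying in $B$, hence $N\subseteq Core_G(B)$ by maximality of the core.

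To prove $N\subseteq B$ I would pass to the standard pro-$\C$ tree $T$ of the amalgam (see \ref{standard}), whose vertices are the cosets $gG_1,gG_2$ and whose edges are the cosets $gB$, with $d_0(gB)=gG_1$ and $d_1(gB)=gG_2$. Writing $v_1=G_1$, $v_2=G_2$ and $e_0=B$ for the base vertices and edge, the stabilizers are $G_{v_1}=G_1$, $G_{v_2}=G_2$ and $G_{e_0}=B$. Since $N\subseteq G_1=G_{v_1}$, the subgroup $N$ fixes $v_1$; and as $N$ is normal, for every $g\in G$ one has $N=g^{-1}Ng\subseteq g^{-1}G_{v_1}g=G_{g^{-1}v_1}$, so $N$ fixes every vertex of the form $gv_1$.

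The hypothesis $B\neq G_2$ now enters. I would choose $g_2\in G_2\setminus B$. Then $g_2\notin G_1$ (otherwise $g_2\in G_1\cap G_2=B$), so $g_2v_1=g_2G_1\neq G_1=v_1$, while $d_1(g_2B)=g_2G_2=G_2=v_2$; thus $v_1$ and $g_2v_1$ are two distinct vertices both adjacent to $v_2$, and the reduced path joining them is the chain $v_1,e_0,v_2,g_2e_0,g_2v_1$, which contains the edge $e_0$. By Theorem \ref{subtree} the fixed set $T^N$ is a pro-$\C$ subtree, and by the previous paragraph it contains both $v_1$ and $g_2v_1$; being a subtree it contains the smallest pro-$\C$ subtree joining them, namely the chain above, so in particular $e_0\in T^N$. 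Hence $N$ fixes $e_0$ and $N\subseteq G_{e_0}=B$, as required.

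The one delicate point will be this last step, the pro-$\C$ tree geometry: that a pro-$\C$ subtree containing two vertices contains the smallest subtree joining them, and that here this subtree is exactly the length-two chain through $v_2$. This belongs to the structure theory of pro-$\C$ trees (cf. \cite{R}); equivalently, one may invoke that $G_{v_1}\cap G_{g_2v_1}$ stabilizes every edge of the geodesic $[v_1,g_2v_1]$, which contains $e_0$, and apply this to $N\subseteq G_{v_1}\cap G_{g_2v_1}$. As an alternative to the geodesic argument, after disposing of the trivial case $B=G_1$ (where $G_1=B$ and the claim is vacuous), one could note that $T^N$ is nonempty and $G$-invariant because $N\trianglelefteq G$, so by irreducibility of the standard tree of the reduced amalgam $T^N=T$; this places $N$ in the kernel of the action, which is exactly $Core_G(B)$.
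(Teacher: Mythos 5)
Your proof is correct and follows essentially the same route as the paper's: both act on the standard pro-$\C$ tree, take $g_2\in G_2\setminus B$, use Theorem \ref{subtree} to conclude that the core of $G_1$ fixes the geodesic between the distinct vertices $v_1$ and $g_2v_1$ (hence fixes an edge, so lies in a conjugate of $B$), and finish by normality. Your version merely adds the easy inclusion $Core_G(B)\subseteq Core_G(G_1)$ and spells out the length-two chain explicitly, which the paper leaves implicit.
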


\begin{proof} Note that $G$ acts on the standard pro-$\C$ tree $T$ associated to the above amalgamated product. Let $G_1$ be the stabilizer of the vertex $v_1 \in V(T)$ and $G_2$ be the stabilizer of $v_2 \in V(T)$. Let $g \in G_2 \setminus B$. Then
$M = Core_G(K) \subseteq G_1 \cap G_1^{g^{-1}}$, so $M$ is in the stabilizer of $v_1$ and $g v_2$. Then by Theorem \ref{subtree}  $M$ fixes the maximal pro-$\C$ tree of $T$ that contains $v_1$ and $g v_2$, in particular $M$ fixes an edge i.e. there is $g_0 \in G$ such that $M \subseteq B^{g_0}$
and since $M$ is normal in $G$ we conclude that $M \subseteq B$. Hence $Core_G(G_1) = Core_G(B)$.
\end{proof}

Let $\Gamma$ be a finite (simplicial) graph and for every vertex $v \in V(\Gamma)$ there is a pro-$\C$ group $G(v)$. Consider the corresponding graph product  of pro-$\C$ groups i.e. the pro-$\C$ group given by the presentation
$$G =\mathcal{G} (\Gamma) = \langle \{ G(v) \}_{v \in V(\Gamma)} \ | \ [G(v), G(w)] = 1 \hbox{ if }v, w  \hbox{ are adjacent in } \Gamma \}.$$

We fix a vertex $v \in V(\Gamma)$ and define $$B = \mathcal{G} (lk(v))$$ where $lk(v)$ is the link of $v$ in $\Gamma$ i.e. the subgraph of $\Gamma$ spanned by the vertices adjacent to $v$. Let $\Gamma_1$ be the subgraph of $\Gamma$ spanned by the vertices $V(\Gamma) \setminus \{ v \}$, $\Gamma_2 = star(v)$ be the star of $v$ i.e. the subgraph of $\Gamma$ spaned by $v$ and $lk(v)$. We set $$G_1 = \mathcal{G}(\Gamma_1) \hbox{ and }G_2 = \mathcal{G} (\Gamma_2).$$ Note that $$G_2 =  B \times G(v).$$ The decomposition $\Gamma = \Gamma_1 \cup \Gamma_2$, where $\Gamma_1 \cap \Gamma_2 = lk(v)$ gives the decomposition as a free amalgamated pro-$\C$ product
\begin{equation} \label{amalgam1} G = G_1 \amalg_B G_2. \end{equation}
Note that the above free amalgamated pro-$\C$ product is proper, since both $G_1$ and $G_2$ are pro-$\C$ retracts of $G$ (one easily deduces it from \cite[Proposition 9.2.2]{R-Z2}).
Define $T_v$  to be  the standard pro-$\C$ tree that corresponds to this decomposition.

Note that we have a decomposition
\begin{equation} \label{eq-q} \mathcal{G}(\Gamma) = \mathcal{G}(\Gamma^{(1)}) \times \ldots \times \mathcal{G}(\Gamma^{(k)})
\end{equation}
that cannot be further decomposed.
For a normal subgroup $N$ of $\mathcal{G}(\Gamma)$ we say that $N$ is full, if $N$ intersects non-trivially each factor $\mathcal{G}(\Gamma^{(i)})$ in the above decomposition.

We write $Z(V(\Gamma))$ for all $ v \in V(\Gamma)$ that are linked by edge with any vertex in  $V(\Gamma)$.

 \begin{lemma} \label{core1} Let $\Gamma$ be a finite (simplicial) graph and $\Gamma_0$ be a subgraph of $\Gamma$. Then the core of $\mathcal{G}(\Gamma_0)$ in $G = \mathcal{G}(\Gamma)$ is $\mathcal{G}(\Delta)$, where $\Delta$ is the max subgraph of $\Gamma_0$ such that $G$ has a decomposition as $\mathcal{G} (\Delta) \times \mathcal{G} (\Delta_1)$, where $\Delta$ and $\Delta_1$ could be the empty sets.
\end{lemma}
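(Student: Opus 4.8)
The plan is to prove the two inclusions $\mathcal{G}(\Delta)\subseteq Core_G(\mathcal{G}(\Gamma_0))$ and $Core_G(\mathcal{G}(\Gamma_0))\subseteq\mathcal{G}(\Delta)$ separately. Throughout I write $W=V(\Gamma_0)$ and identify $\mathcal{G}(\Gamma_0)$ with the subgroup $\mathcal{G}(W)$ of $G$ generated by the vertex groups $G(u)$, $u\in W$; I may assume every vertex group is non-trivial, since a vertex with trivial group can be deleted from $\Gamma$ without changing $G$. The first inclusion I would dispatch at once: by the definition of $\Delta$ we have $G=\mathcal{G}(\Delta)\times\mathcal{G}(\Delta_1)$, so $\mathcal{G}(\Delta)$ is a direct factor and hence normal in $G$; as it also lies in $\mathcal{G}(\Gamma_0)$, it lies in every conjugate $\mathcal{G}(\Gamma_0)^g$ and therefore in the core. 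All the content is in the reverse inclusion, so I set $M=Core_G(\mathcal{G}(\Gamma_0))$, a normal subgroup of $G$ contained in $\mathcal{G}(W)$, and aim to show $M\subseteq\mathcal{G}(\Delta)$.

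First I would introduce the combinatorial device that drives the argument: the descending chain of vertex sets $W_0=W\supseteq W_1\supseteq\cdots$ defined by
\[ W_{i+1}=\{\,u\in W_i\mid u\text{ is adjacent in }\Gamma\text{ to every }v\in V(\Gamma)\setminus W_i\,\}. \]
Since $V(\Gamma)$ is finite the chain stabilizes, at $W_\infty$ say, and I claim $W_\infty=V(\Delta)$. Indeed, at stabilization every vertex of $W_\infty$ is adjacent to every vertex of $V(\Gamma)\setminus W_\infty$, so $G=\mathcal{G}(W_\infty)\times\mathcal{G}(V(\Gamma)\setminus W_\infty)$ with $W_\infty\subseteq W$; maximality of $\Delta$ then forces $W_\infty\subseteq V(\Delta)$. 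Conversely $V(\Delta)\subseteq W_i$ for all $i$ by induction, because each $u\in V(\Delta)$ lies in $W$ and is adjacent to every vertex of $V(\Gamma)\setminus V(\Delta)\supseteq V(\Gamma)\setminus W_i$, hence survives into $W_{i+1}$. In terms of the complement graph $\overline{\Gamma}$ the chain simply deletes, step by step, every vertex of $W$ joined by a $\overline{\Gamma}$-path to $V(\Gamma)\setminus W$, leaving exactly the vertices of $W$ whose whole $\overline{\Gamma}$-component lies in $W$, which is precisely $V(\Delta)$.

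Next I would prove $M\subseteq\mathcal{G}(W_i)$ for all $i$ by induction on $i$; combined with the previous paragraph this gives $M\subseteq\mathcal{G}(W_\infty)=\mathcal{G}(\Delta)$ and finishes the proof. The base case $M\subseteq\mathcal{G}(W_0)=\mathcal{G}(\Gamma_0)$ is the definition of the core. For the inductive step, assume $M\subseteq\mathcal{G}(W_i)$ and fix any $v\in V(\Gamma)\setminus W_i$. Using the decomposition (\ref{amalgam1}), namely $G=\mathcal{G}(\Gamma\setminus\{v\})\amalg_{\mathcal{G}(lk(v))}\mathcal{G}(star(v))$, in which the edge group $\mathcal{G}(lk(v))$ is properly contained in $\mathcal{G}(star(v))=\mathcal{G}(lk(v))\times G(v)$ because $G(v)\neq 1$, Lemma \ref{core} yields $Core_G(\mathcal{G}(\Gamma\setminus\{v\}))=Core_G(\mathcal{G}(lk(v)))\subseteq\mathcal{G}(lk(v))$. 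Since $W_i\subseteq V(\Gamma)\setminus\{v\}$ we have $M\subseteq\mathcal{G}(W_i)\subseteq\mathcal{G}(\Gamma\setminus\{v\})$, and $M$ is normal in $G$, so $M\subseteq Core_G(\mathcal{G}(\Gamma\setminus\{v\}))\subseteq\mathcal{G}(lk(v))$. Running over all $v\in V(\Gamma)\setminus W_i$, intersecting these containments with $M\subseteq\mathcal{G}(W_i)$, and using that subgroups of the form $\mathcal{G}(U)$ intersect along the intersection of their vertex sets, $\mathcal{G}(A)\cap\mathcal{G}(B)=\mathcal{G}(A\cap B)$ (which follows in the pro-$\C$ category from the canonical retractions, cf. \cite[Proposition 9.2.2]{R-Z2}), I obtain $M\subseteq\mathcal{G}\bigl(W_i\cap\bigcap_{v\notin W_i}lk(v)\bigr)=\mathcal{G}(W_{i+1})$, completing the induction.

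The hard part, and the reason for the iteration, is exactly the point just circumvented: a single application of Lemma \ref{core} only confines $M$ to the links of the vertices currently outside the set under consideration, and deleting one such outside vertex generally enlarges the candidate direct factor, so no one-step induction on $|V(\Gamma)\setminus V(\Gamma_0)|$ can succeed. The descending chain $W_i$ is what strips away, through the complement graph, precisely the vertices of $\Gamma_0$ that are $\overline{\Gamma}$-linked to the outside, converging on the genuine maximal join factor $V(\Delta)$. The only other point that I expect to require care is the behaviour of intersections of the subgroups $\mathcal{G}(U)$ in the pro-$\C$ setting, which is handled by the retraction homomorphisms.
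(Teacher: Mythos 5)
Your proposal is correct, but it takes a genuinely different route from the paper. The paper proves Lemma \ref{core1} by induction on $|V(\Gamma)|$: it peels off one vertex $v_0 \in V(\Gamma)\setminus V(\Gamma_0)$ at a time, uses Lemma \ref{core} to convert the core of $\mathcal{G}(\Gamma_0)$ into the core of the link subgroup $\mathcal{G}(lk(v_0))$, and then invokes the inductive hypothesis inside the smaller ambient graph, tracking a chain of auxiliary subgraphs ($\Delta_2,\ldots,\Delta_9$) to reassemble the maximal direct factor; both inclusions are obtained at once by this recursion. You instead separate the two inclusions: the containment $\mathcal{G}(\Delta)\subseteq Core_G(\mathcal{G}(\Gamma_0))$ is immediate from normality of a direct factor, and for the reverse you run a monotone fixed-point iteration $W_0\supseteq W_1\supseteq\cdots$ on vertex sets, applying Lemma \ref{core} at \emph{every} vertex outside $W_i$ simultaneously and intersecting the resulting constraints via the identity $\mathcal{G}(A)\cap\mathcal{G}(B)=\mathcal{G}(A\cap B)$, which you correctly justify with the canonical retractions (this identity is an extra ingredient the paper's recursion never needs, but it is sound in the pro-$\C$ category: for $g\in\mathcal{G}(A)\cap\mathcal{G}(B)$ one has $g=\rho_B(g)\in\rho_B(\mathcal{G}(A))=\mathcal{G}(A\cap B)$). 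What your approach buys is an explicit combinatorial identification of $V(\Delta)$ as the stable set $W_\infty$ — equivalently, the union of the components of the complement graph $\overline{\Gamma}$ lying entirely inside $V(\Gamma_0)$ — and it avoids the paper's nested induction and its bookkeeping; what the paper's approach buys is that it is self-contained modulo Lemma \ref{core}, needing no intersection formula for standard subgroups. One small point worth making explicit in your write-up: when $lk(v)=\Gamma\setminus\{v\}$ the amalgam (\ref{amalgam1}) degenerates to $G_1=B$, but then the containment $Core_G(\mathcal{G}(\Gamma\setminus\{v\}))\subseteq\mathcal{G}(lk(v))$ you need is a tautology, so your induction goes through in that case as well.
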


\begin{proof} We induct on $| V(\Gamma) |$. If $\Gamma_0 = \Gamma$ there is nothing to prove. Thus we can assume that $| V(\Gamma) \setminus V(\Gamma_0)| \geq 1$.

Suppose first that $V(\Gamma) \setminus V(\Gamma_0) = \{ v_0 \}$.
Let $\Gamma_1 = lk(v_0)$. Then we have a decomposition as a free amalgamated pro-$\C$ product
$$G = \mathcal{G}(\Gamma_0) \coprod_{\mathcal{G} (\Gamma_1)} \mathcal{G} (\Gamma_1) \times G(v_0).$$
By Lemma \ref{core} the core of $\mathcal{G}(\Gamma_0)$ in $G$ is the core of $\mathcal{G} (\Gamma_1)$ in $G$.
Note that the core of $\mathcal{G}(\Gamma_1)$ in $G$ is the core of  $\mathcal{G}(\Gamma_1)$ in  $\mathcal{G}(\Gamma_0)$.
 By induction the core of $\mathcal{G}(\Gamma_1)$ in $\mathcal{G}(\Gamma_0)$ is $\mathcal{G}(\Delta_2)$, where $\Delta_2$ is the maximal subgraph of $\Gamma_1$ such that $\mathcal{G}(\Gamma_0) = \mathcal{G}(\Delta_2) \times \mathcal{G}(\Delta_3)$.
Note that $G=\mathcal{G}(\Gamma) = \mathcal{G}(\Delta_2) \times \mathcal{G}(\Delta_4)$, where $\Delta_4$ is the subgraph of $\Gamma$ spanned by $\Delta_3$ and $v_0$.
 Thus $\Delta_2 = \Delta$,  $\Delta_1 = \Delta_4$.

Suppose that  $| V(\Gamma) \setminus V(\Gamma_0) | \geq 2$. Let $\Gamma_2$ be a subgraph of $\Gamma$ that contains $\Gamma_0$ and $V(\Gamma) \setminus V(\Gamma_2) = \{ v_0 \}$.
The core of $\mathcal{G}(\Gamma_0)$ in $\mathcal{G} (\Gamma_2)$ is $\mathcal{G} (\Delta_5)$, where $\Delta_5$ is the maximal subgraph of $\Gamma_0$ such that we have a decomposition $\mathcal{G} (\Gamma_2) = \mathcal{G} (\Delta_5) \times \mathcal{G} (\Delta_6)$.

 Let $\Gamma_3$ be the subgraph of $\Gamma$ spanned by $\Delta_5$ and $v_0$. Then the core of $\mathcal{G}(\Gamma_0)$ in $G$ is the core of $\mathcal{G} (\Delta_5)$ in $G$. And it is    the core  of $\mathcal{G} (\Delta_5)$ in $\mathcal{G} (\Gamma_3)$  that by induction    is $\mathcal{G}(\Delta_7)$, where $\Delta_7$ is the maximal subgraph of $\Delta_5$ such that $\mathcal{G} (\Gamma_3) = \mathcal{G}(\Delta_7) \times \mathcal{G}(\Delta_8)$. Then $G = \mathcal{G}(\Delta_7) \times \mathcal{G}(\Delta_9)$, where $\Delta_9$ is the subgraph of $\Gamma$ spanned by $\Delta_6$ and $\Delta_8$. Then $\Delta = \Delta_7$ and $\Delta_1 = \Delta_9$.
\end{proof}

\begin{lemma} \label{faith-irr}  Let $\Gamma$ be a finite (simplicial) graph and $G = \mathcal{G} (\Gamma) $ be indecomposible as a direct product i.e. in \eqref{eq-q} $k = 1$.  Then $G$ acts faithfully and irreducibly on  the standard pro-$\C$ tree $T_v$ associated to (\ref{amalgam1}).
\end{lemma}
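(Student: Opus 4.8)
The plan is to prove the two conclusions separately, reading both off the core computation of Lemma~\ref{core1}. At the outset I would record the trivial reduction that we may take $|V(\Gamma)|\ge 2$: if $\Gamma$ is a single vertex then $G=G(v)$ and the amalgam (\ref{amalgam1}) degenerates (with $G_1=B=1$), the induced action fixing the central $G_2$-vertex of $T_v$, so this case is genuinely not irreducible and must be set aside.

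For faithfulness I would first identify the kernel $K$ of the action of $G$ on $T_v$. Since (\ref{amalgam1}) is a proper amalgam, the edge stabilizers of $T_v$ are exactly the conjugates $B^{g}$, and every vertex of $T_v$ is an endpoint of an edge; hence an element acts trivially precisely when it fixes every edge, giving $K=\bigcap_{g\in G}B^{g}=Core_G(B)$. Now I would apply Lemma~\ref{core1} with $\Gamma_0=lk(v)$, so that $\mathcal{G}(\Gamma_0)=B$: it yields $Core_G(B)=\mathcal{G}(\Delta)$, where $\Delta$ is the maximal subgraph of $lk(v)$ admitting a product decomposition $G=\mathcal{G}(\Delta)\times\mathcal{G}(\Delta_1)$. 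Because $G$ is directly indecomposable ($k=1$ in (\ref{eq-q})), one of the two factors is trivial; the factor $\mathcal{G}(\Delta_1)$ cannot be trivial unless $\Delta=\Gamma$, which is impossible since $\Delta\subseteq lk(v)$ omits $v$ while $\Gamma$ does not. Hence $\Delta=\emptyset$, so $K=Core_G(B)=1$ and the action is faithful.

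For irreducibility I would show that the one-edge graph of groups underlying (\ref{amalgam1}) is reduced in the sense of Definition~\ref{reduced}, and then invoke the structural fact that the fundamental group of a reduced graph of pro-$\C$ groups acts irreducibly on its standard pro-$\C$ tree (\cite[Proposition 4.2.3 (a)]{R} or \cite[Proposition 2.1]{Z-90}), exactly as used in the proof of Corollary~\ref{normal}. Reducedness here means $B\neq G_1$ and $B\neq G_2$. The inclusion $B=\mathcal{G}(lk(v))\subseteq G_2=B\times G(v)$ is proper because $G(v)\neq 1$, so $B\neq G_2$. For $B\neq G_1$, note $B=\mathcal{G}(lk(v))\subseteq \mathcal{G}(\Gamma_1)=G_1$ with equality iff $lk(v)=\Gamma_1$, i.e. iff $star(v)=\Gamma$; but in that case $v$ is adjacent to every other vertex, so $G(v)$ splits off as a direct factor and $G=G(v)\times\mathcal{G}(lk(v))$ is a nontrivial direct product (here $lk(v)\neq\emptyset$ as $|V(\Gamma)|\ge 2$), contradicting $k=1$. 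Thus $B\neq G_1$, the amalgam is reduced, and the action is irreducible.

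The faithfulness half is essentially a one-line application of Lemma~\ref{core1}; the part demanding care is irreducibility. The main obstacle is that a naive quotient-graph argument does not suffice, since a proper $G$-invariant subtree of $T_v$ can still surject onto the single-edge quotient $T_v/G$; one really needs the external minimality theorem for standard trees of reduced graphs of groups. The indecomposability hypothesis is used precisely to rule out the non-reduced configuration $B=G_1$ (equivalently $star(v)=\Gamma$), while the other non-reduced configuration is exactly the excluded single-vertex case, so both exceptional possibilities are accounted for.
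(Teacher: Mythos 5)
Your proof is correct, and both halves follow essentially the paper's own route: the faithfulness argument is verbatim the paper's (the kernel of the action is $Core_G(B)$, and Lemma \ref{core1} plus indecomposability forces $\Delta=\emptyset$), while your reducedness conditions $B\neq G_1$, $B\neq G_2$ are exactly the paper's observation that indecomposability forces $Z(V(\Gamma))=\emptyset$ (misprinted in the paper as $Z(V)\neq\emptyset$). Your explicit exclusion of the one-vertex graph is a worthwhile addition: there the conclusion genuinely fails, and the lemma is indeed only invoked in Theorem \ref{graph-product} when $|V(\Gamma^{(i)})|>1$.

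The only place you diverge from the paper is the final step of irreducibility, and there your stated ``main obstacle'' is not real. A proper $G$-invariant subtree of $T_v$ cannot surject onto the one-edge quotient: $E(T_v)=\{gB\}_{g\in G}$ is a single $G$-orbit, so an invariant subtree containing any edge contains every edge, hence (applying $d_0,d_1$) every vertex, hence equals $T_v$. A proper invariant subtree therefore has empty edge set, and by exactness of (\ref{def1}) a connected profinite graph with no edges is a single vertex; a $G$-fixed vertex $gG_i$ forces $G=gG_ig^{-1}$, i.e. $G_1=G$ or $G_2=G$, which in a proper amalgam means $G_2=B$ or $G_1=B$ --- precisely what your reducedness check rules out. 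This short argument is what the paper's one-line justification (``$T_v/G$ is just one edge with two vertices'') amounts to, once $Z(V(\Gamma))=\emptyset$ is in hand; so the appeal to \cite[Proposition 4.2.3 (a)]{R} or \cite[Proposition 2.1]{Z-90} is not needed, though it is harmless and consistent with how the paper itself cites those results in Corollary \ref{normal}.
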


\begin{proof} The faithfulness is equivalent to $\cap_{g \in G} B^g = 1$. Then we apply Lemma \ref{core1}.

Note that the indecomposability of $G$ implies that $Z(V) \not= \emptyset $.
The irreducibility comes from the fact that  $T_v/ G$ is  just one edge with two vertices.
\end{proof}

\begin{theorem} \label{graph-product} Let $\Gamma$ be a finite simplicial graph. Suppose that  for every   central vertex $w \in V(\Gamma)$ every proper quotient of $G(w)$ is a finite-by-abelian  pro-$\C$ group. Suppose further that $N$ is a non-trivial, finitely generated, normal, full pro-$\C$ subgroup of $\mathcal{G}(\Gamma)$. Then $\mathcal{G}(\Gamma)/ N$ is  finitely generated, finite-by-abelian, in particular it is abelian-by-finite.
\end{theorem}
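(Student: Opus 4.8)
The plan is to prove the statement in two stages: first for $G$ \emph{indecomposable} (i.e. $k=1$ in \eqref{eq-q}), and then to bootstrap the general case from the direct product decomposition together with fullness. For the indecomposable case I distinguish two situations. If $\Gamma$ consists of a single vertex $v$, then $v$ is (vacuously) central, $G=G(v)$, and $G/N$ is a proper quotient of $G(v)$; the hypothesis then gives at once that $G/N$ is finite-by-abelian. If $|V(\Gamma)|\ge 2$ and $G$ is indecomposable, then $\Gamma$ has no central vertex (a central vertex would split off as a direct factor), so by Lemma \ref{faith-irr} the group $G$ acts faithfully and irreducibly on the standard tree $T_v$ of the amalgam \eqref{amalgam1}, with $T_v/G$ a single edge. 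I then feed this action into Theorem \ref{main}: either $G$ is virtually procyclic, or $T_v/N$ is finite.

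In the virtually procyclic case $G$ is, up to the faithful action, either $\mathbb{Z}_{\C}$ or of dihedral type $C_2\amalg C_2$; a non-trivial finitely generated normal subgroup is then open, so $G/N$ is finite and we are done. The substantial case is $T_v/N$ finite. Here I would invoke Corollary \ref{normal} to write $N$ as the fundamental group of a finite graph of pro-$\C$ groups over $\Lambda=T_v/N$, whose vertex and edge groups are the intersections $N\cap G_i^{g}$ and $N\cap B^{g}$; the quotient $G/N$ then acts on the \emph{finite} graph $\Lambda$ with vertex stabilizers $G_i/(G_i\cap N)$ and edge stabilizers $B/(B\cap N)$. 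Since $G_1=\mathcal{G}(\Gamma\setminus\{v\})$ is a graph product on fewer vertices and $G_1N/N=G_1/(G_1\cap N)$ has finite index in $G/N$ (because $[G:NG_1]\le |T_v/N|<\infty$), the intention is to run an induction on $|V(\Gamma)|$, controlling the finite kernel of the $G/N$-action on $\Lambda$, so as to conclude that the commutator subgroup $[G/N,G/N]=[G,G]N/N$ is finite.

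For the reduction to the indecomposable case, write $G=\prod_i G^{(i)}$ as in \eqref{eq-q}, put $N_i=N\cap G^{(i)}$ and $M=\prod_i N_i\trianglelefteq G$. A finite direct product of finite-by-abelian groups is finite-by-abelian, and $G/N$ is a quotient of $G/M=\prod_i G^{(i)}/N_i$, so it suffices to prove that each $G^{(i)}/N_i$ is finite-by-abelian. Fullness gives $N_i\neq 1$, the projection $\pi_i(N)$ is a finitely generated normal subgroup of the indecomposable $G^{(i)}$, and a direct computation shows $[G^{(i)},N]\subseteq N_i$, so that $\pi_i(N)/N_i$ is central in $G^{(i)}/N_i$; I would feed $\pi_i(N)$ into the indecomposable case and combine this with the central-extension information to handle $G^{(i)}/N_i$. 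I expect the main obstacle to be exactly this sharpening: Theorem \ref{main} naturally yields cofiniteness of the action, hence that $G/N$ is \emph{abelian-by-finite}, and the delicate work is to upgrade this to the claimed \emph{finite-by-abelian} conclusion --- i.e. to show that the relevant commutator subgroup has finite \emph{image} rather than merely finite index --- together with verifying that the possibly non-finitely-generated intersections $N_i$ still produce finite-by-abelian factor quotients.
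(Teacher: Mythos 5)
Your setup is the same as the paper's (the decomposition \eqref{eq-q}, Lemma \ref{faith-irr}, Theorem \ref{main}, and the degenerate one-vertex and virtually procyclic cases are all handled as in the paper), but the heart of the argument is missing, and you say so yourself: everything after ``$T_v/N$ is finite'' is an ``intention'', and both routes you sketch for realizing it fail. First, your inductions pass to intersections --- $N\cap G_{\Gamma_1}$ in the indecomposable case, $N_i=N\cap G^{(i)}$ in the product case --- but such intersections of a finitely generated normal pro-$\C$ subgroup with a subgroup need not be finitely generated (nor, for $G_{\Gamma_1}$, full or even non-trivial), so neither Theorem \ref{main} nor any inductive hypothesis can be applied to them; you flag this for the $N_i$ but do not resolve it. Second, and more fatally, the bootstrap in your reduction step is invalid: from ``$G^{(i)}/\pi_i(N)$ is finite-by-abelian'' and ``$\pi_i(N)/N_i$ is central in $G^{(i)}/N_i$'' one cannot conclude that $G^{(i)}/N_i$ is finite-by-abelian, since a central extension of an abelian group can already be nilpotent of class $2$ with \emph{infinite} commutator subgroup (Heisenberg-type groups fit your exact pattern: central kernel $\mathbb{Z}_p$, quotient $\mathbb{Z}_p^2$). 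Even your intermediate claim that cofiniteness of the action ``naturally yields'' abelian-by-finite is not automatic; it too requires the computation below.

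The missing idea --- exactly what closes the step you call ``the main obstacle'' --- is the paper's global argument, following \cite{CR-Z}, which uses no induction and never looks at the intersections $N\cap G^{(i)}$. Run the tree argument at \emph{every} non-central vertex $v$, applying Theorem \ref{main} to the \emph{projection} of $N$ to the indecomposable factor containing $v$ (projections, unlike intersections, stay finitely generated and are non-trivial by fullness); this yields $[G:N\,\mathcal{G}(lk_{\Gamma}(v))]<\infty$ for all $v\in V(\Gamma)\setminus Z(\Gamma)$, i.e.\ (\ref{index1}). Set $H=\bigcap_{v\in V(\Gamma)\setminus Z(\Gamma)}N\,\mathcal{G}(lk(v))$, a finite-index subgroup of $G$. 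For $h=nh_v\in H$ with $n\in N$, $h_v\in\mathcal{G}(lk(v))$ and $g_v\in G(v)$ one has $[h,g_v]=[n,g_v]^{h_v}[h_v,g_v]=[n,g_v]^{h_v}\in N$, so $[H,G(v)]\subseteq N$ for non-central $v$, while $[H,G(v)]\subseteq[G(v),G(v)]$ for central $v$. Hence the image of $H$ is a central subgroup of finite index in $M=G/N\prod_{v\in Z(\Gamma)}[G(v),G(v)]$, and the pro-$\C$ version of Schur's theorem \cite{Schur} gives that $M'$ is finite, i.e.\ $M$ is finite-by-abelian: this is precisely the mechanism that upgrades ``finite index'' to ``finite commutator image'', and it is the ingredient absent from your proposal. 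The central vertices are then handled by fullness: $N\cap G(v)\neq 1$, so $G(v)/(N\cap G(v))$ is finite-by-abelian by hypothesis, whence the kernel $N\prod_{v\in Z(\Gamma)}[G(v),G(v)]/N$ of $G/N\twoheadrightarrow M$ is finite, and $G/N$ is finite-by-abelian.
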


\begin{proof}
Consider the decomposition (\ref{eq-q}). Suppose  $\Gamma^{(i)}$ has more than one vertex and let $v_{i} \in V(\Gamma^{(i)})$ be an arbitrary vertex. Note that  $Z(\Gamma^{(i)}) = \emptyset$ otherwise the decomposition (\ref{eq-q}) can be further decomposed. Consider $N_i$ the image of $N$ in $\mathcal{G}(\Gamma^{(i)}) = : G_i$. By assumption of the fullness of $N$ we have that $N_i$ is non-trivial.

 Consider the decomposition as pro-$\C$ amalgamated product
$$G_i = \mathcal{G}(\Gamma_{0,i}) \coprod_{\mathcal{G} (\Gamma_{1,i})} (\mathcal{G} (\Gamma_{1,i}) \times G(v_{i}) ),$$
where  $\Gamma_{1,i}$ is the link of $v_{i}$ in $\Gamma^{(i)}$ and $\Gamma_{0,i} = \Gamma^{(i)} \setminus \{ v_{i} \}$.

Let  $T_{v_i}$ be the standard  pro-$\C$ tree associated to the above decomposition. Then $G_i$ acts on  $T_{v_i}$ and  by  Lemma \ref{faith-irr}  $G_i$ acts faithfully and irreducibly on  $T_{v_i}$. Then by Theorem \ref{main}  $T_{v_i}/ N_i$ is finite  or $G_i$ is virtually procyclic.
 Note that $G$ acts on $T_{v_i}$  via the canonical projection $G \to G_i$. Thus if  $T_{v_i}/ N_i$ is finite we have that $T_{v_i}/ N \prod_{j \not=i} G_j$ is finite.

Suppose that $G_i$ is virtually procyclic. By assumption $v_{i} \in V(\Gamma^{(i)}) \setminus Z(\Gamma^{(i)})$, hence there is $w_i \in  V(\Gamma^{(i)})$ such that $v_{i}$ and $w_i$ are not linked by an edge, so $G_i$ maps epimorphically to $G(v_{i}) \coprod G(w_i)$ and $G(v_{i}) \coprod G(w_i)$ is virtually procyclic. Then $G(v_{i}) = C_2 = G(w_i)$ and $\Gamma^{(i)}$ is just one edge with vertices $v_{i}$ and $w_i$. Thus $G_i =  C_2 \coprod C_2$  and every non-trivial normal closed subgroup of $G_i$ is open, in particular $[G_i : N_i] < \infty$. Then $[G : N \prod_{j \not=i} G_j]$ is finite, so  $T_{v_i}/ N \prod_{j \not=i} G_j$ is finite.

 Note that  $ (\prod_{j \not= i} G_j)  \mathcal{G} ( lk_{\Gamma^{(i)}}(v_{i})) = \mathcal{G} (lk_{\Gamma}(v_i))$. Then  $T_{v_i}/ N \prod_{j \not=i} G_j$  is finite is equivalent to \begin{equation} \label{index1}  [G : N  \mathcal{G} (lk_{\Gamma}(v_i))] < \infty,\end{equation}
  where $G = \mathcal{G}(\Gamma)$.  The above holds for every  $v_i \in V(\Gamma^{(i)})$ with $| V(\Gamma^{(i)})| > 1$. Furthermore  $| V(\Gamma^{(i)})| = 1$ if and only if  $ V(\Gamma^{(i)}) \subseteq Z(V(\Gamma))$.

Then we can continue as in \cite{CR-Z}, where the abstract case is considered. For completeness we outline the proof.

Consider the short exact sequence
$$1 \to \frac{N \prod_{v \in Z(\Gamma)} [G(v), G(v)]}{N} \to \frac{G}{N} \to \frac{G}{N \prod_{v \in Z(\Gamma)}  [G(v), G(v)]} \to 1.$$
Since $N$ is full,  $N \cap G(v) \not=1$ for every $v \in Z(\Gamma)$. Hence $G(v)/ (N \cap G(v))$ is finite-by-abelian and the commutator subgroup of $\frac{N \prod_{v \in Z(\Gamma)} G(v)}{N}$ is finite, so $\frac{N \prod_{v \in Z(\Gamma)} [G(v), G(v)]}{N}$ is finite.

Define $$H = \cap_{v \in V(\Gamma) \setminus Z(\Gamma)} N \mathcal{G} ( lk(v))$$
and note that by (\ref{index1}) $[G : H] < \infty$.
We claim that $M: = \frac{G}{N \prod_{v \in Z(\Gamma)}  [G(v), G(v)]}$ is finite-by-abelian, hence $\frac{G}{N}$ is finite-by-abelian. To prove that $M$ is finite-by-abelian it suffices to show that $K : =
 \frac{H \prod_{v \in Z(\Gamma)}  [G(v), G(v)]}{N \prod_{v \in Z(\Gamma)}  [G(v), G(v)]}$ is central in $M$ together with the observation that $[M : K] < \infty$.

  Suppose that $v \in V(\Gamma) \setminus Z(\Gamma)$,
 $h \in H$, $g_v \in G(v)$, $h = n h_v$, where $n \in N, h_v \in \mathcal{G}(lk(v))$.
 Note that
$$[h, g_v] = [n h_v, g_v] = [n, g_v]^{h_v}. [h_v, g_v] = [n, g_v]^{h_v} \in N^{h_v} = N,$$
thus
$$[H, G(v)] \subseteq N \hbox{ for } v  \in V(\Gamma) \setminus Z(\Gamma).$$
Finally $$[H, G(v)] \subseteq [G, G(v)] = [G(v), G(v)] \hbox{ for
 } v \in Z(\Gamma)$$ imply $[K,M] = 1$, as claimed. Thus $M$  is finitely generated, central-by-finite, hence by a pro-$\C$ version of a result of Schur \cite{Schur} $M$ is a finite-by-abelian pro-$\C$ group.

Finally note that every finitely generated, finite-by-abelian group is abelian-by-finite ( in both discrete and pro-$\C$ case).
\end{proof}

\section{Normal subgroups of pro-$p$ RAAGs with abelian quotients}

From now on we shall deal with pro-$p$ groups only.
%{\color{red} For a finite simplicial graph $\Gamma$ the pro-$p$ RAAG $G_{\Gamma}$ is the construction $\mathcal{G} (\Gamma)$ from the previous section with $G(v) = \mathbb{Z}_p$ for every vertex $v \in V(\Gamma)$.}

If $\Gamma_0$ is a subgraph of a graph $\Gamma$, we say that $\Gamma_0$ is dominant in $\Gamma$ if for every vertex $v \in V(\Gamma) \setminus V(\Gamma_0)$ there is a vertex $w = w(v) \in V(\Gamma_0)$ such that $v$ and $w$ are connected by an edge.

\begin{theorem} \label{dominant} Let $\Gamma$ be a finite simplicial graph and $G = G_{\Gamma}$ be the corresponding pro-$p$ RAAG. Let $\chi : G \to \mathbb{Z}_p$ be an epimomorphism of pro-$p$ groups. Then $N = Ker (\chi)$ is finitely generated as a pro-$p$ group if and only if
the subgraph $\Gamma(\chi) $ of $\Gamma$ spanned by $\{ v \in V(\Gamma) \ | \ \chi(v) \not= 0 \}$ is connected and dominant in $\Gamma$.
\end{theorem}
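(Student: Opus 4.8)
The plan is to prove both implications by means of the vertex--link amalgamated decompositions $G=G_1\amalg_B G_2$ of \eqref{amalgam1} together with the cofiniteness criterion of Theorem \ref{main}. Throughout I identify $\chi$ with the tuple $(\chi(v))_{v\in V(\Gamma)}\in\mathbb{Z}_p^{V(\Gamma)}$; surjectivity of $\chi$ forces at least one $\chi(v)$ to be a unit, so $\Gamma(\chi)\neq\emptyset$. I first dispose of the degenerate case: if $G$ is virtually procyclic then $\Gamma$ is a single vertex, $N=1$, and the statement is vacuous, so I may assume $G$ is not virtually procyclic. When passing to a given splitting I replace $G$ by $G/Core_G(B)$, its faithful quotient on $T_v$ (the kernel $Core_G(B)=Core_G(G_1)$ being computed by Lemma \ref{core} and Lemma \ref{core1}), so that Lemma \ref{faith-irr} applies and the action is faithful and irreducible. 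The fact I use repeatedly is that in the splitting at $v$ the quotient graph $T_v/N$ is finite if and only if each of $\chi(B),\chi(G_1),\chi(G_2)$ is nonzero, since these double coset spaces are governed by the indices $[\mathbb{Z}_p:\chi(B)]$, $[\mathbb{Z}_p:\chi(G_1)]$, $[\mathbb{Z}_p:\chi(G_2)]$.

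For the necessity of dominance I argue by contraposition. If some $v\notin V(\Gamma(\chi))$ has no living neighbour, then $\chi$ vanishes on $v$ and on all of $lk(v)$, hence $G_2=\mathcal{G}(star(v))\subseteq N$ while $\chi(G_1)=\mathbb{Z}_p$ (every living vertex lies in $\Gamma\setminus v$). Computing $N$--orbits of vertices gives $N\backslash G/G_2=N\backslash G\cong\mathbb{Z}_p$, so $T_v/N$ is infinite. As $G$ is not virtually procyclic and acts faithfully and irreducibly on $T_v$, Theorem \ref{main} forces $N$ not to be finitely generated. Thus finite generation of $N$ implies that every dead vertex has a living neighbour, i.e. $\Gamma(\chi)$ is dominant.

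For the necessity of connectedness I again argue by contraposition, writing $\Gamma(\chi)=A\sqcup C$ with $A,C$ nonempty and no edge of $\Gamma$ between them. When there are no dead vertices this is a genuine free pro-$p$ product $G=\mathcal{G}(A)\amalg\mathcal{G}(C)$ with trivial edge group contained in $N$, so $T/N=N\backslash G\cong\mathbb{Z}_p$ is infinite and Theorem \ref{main} again yields non--finite--generation. In general I must first remove the dead vertices that obstruct such a splitting: collapsing the universal dead vertices (those adjacent to everything, which constitute the kernel of the relevant action) exposes a proper amalgam $G=G_1\amalg_B G_2$ separating $A$ from $C$ across a subgroup $B$ on which $\chi$ vanishes, whence $T/N$ is infinite. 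Producing this separating splitting, and checking that after collapsing the obstructing dead part the quotient still acts faithfully, irreducibly and is not virtually procyclic, is the most delicate point of the necessity direction.

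For sufficiency I assume $\Gamma(\chi)$ connected and dominant and induct on the number of dead vertices. In the base case $V(\Gamma)=V(\Gamma(\chi))$ I induct on $|V(\Gamma)|$ using the vertex splitting: connectedness guarantees $lk(v)\neq\emptyset$, so all three of $\chi(B),\chi(G_1),\chi(G_2)$ are nonzero, $T_v/N$ is finite, and by Corollary \ref{normal} $N$ is the fundamental group of a finite graph of pro-$p$ groups whose vertex and edge groups are kernels of characters on strictly smaller RAAGs, finitely generated by induction. When a dead vertex $v$ is present, dominance provides a living vertex in $lk(v)$, the same computation gives $T_v/N$ finite, and Corollary \ref{normal} again presents $N$ over $T_v/N$. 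The main obstacle lies exactly here: the induced characters on $\Gamma\setminus v$ and $star(v)$ need not have connected or dominant living subgraphs, so the graph--theoretic hypothesis is not literally inherited and the naive induction stalls. I expect to resolve this either by a more careful choice of collapse/splitting preserving the hypothesis, or---cleanest---by replacing the tree argument for sufficiency with the homological computation: finite generation of $N$ is equivalent to finiteness of $H_1(N,\mathbb{F}_p)$, which as an $\mathbb{F}_p[[\mathbb{Z}_p]]$--module is computed by the Koszul--type complex of $\Gamma$ with vertex entries $\chi(v)$, and this module is torsion (equivalently finite over the discrete valuation ring $\mathbb{F}_p[[\mathbb{Z}_p]]$) precisely when $\Gamma(\chi)$ is connected and dominant, matching the discrete computation via Lorensen's $\mathbb{F}_p$--cohomology comparison.
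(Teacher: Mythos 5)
Your two necessity arguments are essentially sound. The dominance half is the paper's own argument (split at the offending dead vertex, deal with the kernel $Core_G(B)$ of the action, apply Theorem \ref{main}). For connectedness you take a harder road than necessary: rather than building a separating amalgam over the dead subgroup inside $G$ and then performing the faithfulness/irreducibility/non-procyclicity checks you yourself flag as ``the most delicate point'' (and do not carry out), one can simply project $G \to G_{\Gamma(\chi)}$ by killing the dead vertices --- the kernel of this projection lies in $N$ since $\chi$ vanishes on dead vertices --- so that the image of $N$ is a finitely generated normal closed subgroup of the free pro-$p$ product $G_{\Gamma_1}\amalg G_{\Gamma_2}$, and Corollary \ref{free product} gives the contradiction immediately. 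That is what the paper does, and it makes all the delicate verifications unnecessary.

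The genuine gap is the sufficiency direction, and it is twofold. First, your citation of Corollary \ref{normal} is circular: that corollary assumes $N$ is finitely generated, which is exactly what you are trying to prove; the non-circular statement you need is the first assertion of Lemma \ref{pending fictitious}, namely that finiteness of $T_v/N$ alone presents $N$ as $\Pi_1$ of a finite graph of groups whose vertex groups are the stabilizers $(N\cap G_1)^g$, $(N\cap G_2)^g$. Second --- as you diagnose yourself --- even after that repair the induction stalls, because the hypothesis is not inherited by the pieces: for $\Gamma$ a $4$-cycle $a\,v\,c\,w$ with $v$ dead and $a,c,w$ living, $\Gamma(\chi)$ is connected and dominant, yet the living part of $star(v)$ is $\{a,c\}$ with no edge, so $N\cap G_{star(v)}$ is \emph{not} finitely generated and the vertex groups of your decomposition fail to be finitely generated. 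Neither of your proposed exits is executed: the ``more careful choice of splitting'' is unspecified, and the Koszul-complex fallback simply asserts the statement to be proved (that $H_1$ of that complex is $\mathbb{F}_p[[\mathbb{Z}_p]]$-torsion precisely when $\Gamma(\chi)$ is connected and dominant); moreover Lorensen's comparison is not the right tool there, since it relates $G$ to $G^{dis}$ rather than $N$ to any discrete subgroup, and for general $\chi$ (values neither integers nor units) $N$ is not the closure of a discrete kernel at all. The paper closes the induction with an idea absent from your proposal: replace $\Gamma$ by a spanning tree $\Gamma_0$ chosen so that $\Gamma_0(\chi)$ is still connected and dominant; since $\pi\colon G_{\Gamma_0}\to G_{\Gamma}$ is onto and $Ker(\pi)\subseteq Ker(\chi\circ\pi)$, finite generation descends along $\pi$, so one may assume $\Gamma$ is a tree --- and for a tree, deleting a pending vertex \emph{does} preserve the hypothesis (its unique neighbour must be living), after which $H_1(N,\mathbb{F}_p)$ is sandwiched between finite modules by the Mayer--Vietoris sequence. (For what it is worth, your Koszul route can be completed purely pro-$p$ by passing to the fraction field of $\mathbb{F}_p[[t]]$, where every nonzero $\chi(v)$ becomes invertible and the rescaling used in Theorem \ref{mild-cond} applies; but no such argument appears in your proposal.)
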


\begin{proof}
1)  Suppose that $N$ is finitely generated as a pro-$p$ group.

1.1) Suppose that $\Gamma(\chi)$ is not connected. Then we have a decomposition $\Gamma(\chi) = \Gamma_1 \cup \Gamma_2$, where the subgraphs $\Gamma_1$ and $\Gamma_2$ are not connected by an edge.
Consider the epimorphism of groups $$\pi : G = G_{\Gamma} \to G_{\Gamma(\chi)}$$ that  is the identity on  $G(v)$ for $v \in V(\Gamma(\chi))$ and sends  $G(w)$ to $1$ for $\chi(w) = 0$.
Note that $Ker (\pi) \subseteq N$ and let $N_0 = \pi(N)$. Then
$N_0$ is a finitely generated normal pro-$p$ subgroup  of
$$G_0 : = G_{\Gamma(\chi)} = G_{\Gamma_1} \amalg G_{\Gamma_2}$$
 Note that $G_0$ is not virtually procyclic.
Then either  $N_0 = 1$, hence  $G_0 \simeq G_0/ N_0 \simeq  \mathbb{Z}_p$ , a contradiction  or
 by  Corollary  \ref{free product}  we get   $\infty = | \mathbb{Z}_p| = [G_0 : N_0] < \infty$,  a contradiction.  This proves that $\Gamma(\chi)$ is connected.

1.2) Suppose that $\Gamma(\chi)$ is not dominant in $\Gamma$. Let $v $ be a vertex of $\Gamma \setminus \Gamma(\chi)$ that is not connected by an edge with a vertex from $\Gamma(\chi)$. Then $G_{star(v)} \subseteq N$ and $|V(\Gamma)| \geq 2$, hence $G$ is not virtually procyclic.
Consider the decompostion
$$G = G_{\Gamma \setminus \{ v \} } \amalg_{G_{lk(v)}} G_{star(v)},$$
 set $A  =  G_{\Gamma \setminus \{ v \} }, B = G_{lk(v)}, C =  G_{star(v)}$.  Let $T$ be the standard pro-$p$ tree associated to this decomposition. Since $T/ G$ is just one edge with two vertices, the action of $G$ on $T$ is irreducible. The kernel of the action is the core of $B$ in $G$ i.e. $\cap_{g \in G} B^g$.
By Theorem \ref{main} either $[G : NB] < \infty$ or $N$ is in the kernel of the action, in particular $N \subseteq B$. The latter is false since $v \in N \setminus B$  and if the former holds since $B \leq G_{star(v)} \leq N$, we have  $NB = N$, hence $\infty > [G : NB] = [G : N] = | \mathbb{Z}_p| = \infty$, a contradiction.  This completes the proof of the fact that $\Gamma(\chi)$ is  dominant in $\Gamma$.

2) Suppose that $\Gamma(\chi) $ is connected and dominant in $\Gamma$. We aim to prove that $N = Ker(\chi)$ is finitely generated ( as a pro-$p$ group). We induct on the number of vertices in $\Gamma$.

Choose a maximal subtree of $\Gamma(\chi)$ and extend it to a maximal subtree  $\Gamma_0$ of $\Gamma$  in such a way that $\Gamma_0(\chi)$ is dominant in $\Gamma_0$.  Note that there is an epimorphism of  pro-$p$ groups $\pi : G_{\Gamma_0} \to G_{\Gamma}$ that is identity on vertices. Let $\chi_0 = \chi \circ \pi$. If $N_0 = Ker(\chi_0)$ is finitely generated as a  pro-$p$ group then $N = Ker(\chi) = \pi(N_0)$ is finitely generated as required.
Thus we can assume from now on that $\Gamma = \Gamma_0$ is a finite tree.

Choose a pending vertex  $v \in V(\Gamma)$.  Then consider the decomposition as a free amalgamated pro-$p$ product
$$
G = G_{\Gamma} = G_{\Gamma_1} \coprod_{G_{lk(v)}} G_{star(v)},
$$
where $\Gamma_1$ is the subgraph of $\Gamma$ spanned by $V(\Gamma) \setminus \{ v \}$. Then by the Mayer-Vietoris  exact sequence (see \cite[Section 9.4]{R}) we have
$$ \ldots \to \oplus_{g \in G/N G_{star(v)} } H_1(N \cap G_{star(v)}^g, \mathbb{F}_p) \oplus \oplus_{g \in G/ N G_{\Gamma_1}} H_1(N \cap G_{\Gamma_1}^g, \mathbb{F}_p) \to H_1(N, \mathbb{F}_p)$$ $$ \to  \oplus_{g \in G/N G_{lk(v)} } H_0(N \cap G_{lk(v)}^g, \mathbb{F}_p) \to \ldots
$$

Note that since $v$ is a pending vertex we have that $lk(v)$ is just a vertex $w$ and the condition that $\Gamma(\chi)$ is connected and dominant in $\Gamma$ implies  $\chi(w) \not= 0$.
 Then $G_{lk(v)} = \overline{\langle w \rangle} $ is not a subset of $N$ and since $G/ N \simeq \mathbb{Z}_p$ we conclude that $[G :  N G_{lk(v)}]$ is finite, hence $[G :  N G_{star(v)}]$ and $[G : N G_{\Gamma_1}]$ are finite. By induction $N \cap G_{star(v)}$ and $N \cap G_{\Gamma_1}$ are finitely generated hence $N \cap G_{star(v)}^g = (N \cap G_{star(v)})^g$ and $N \cap G_{\Gamma_1}^g = (N \cap G_{\Gamma_1})^g$ are finitely generated. Then in the above exact sequence $H_1(N, \mathbb{F}_p)$ is sandwiched between finite modules, hence it is finite too. This implies that $N$ is finitely generated as a pro-$p$ group.
\end{proof}
 {\bf Remark}  The proof of one of the directions of Theorem \ref{dominant} works for pro-$\C$ RAAGs, namely if $N = Ker (\chi)$ is finitely generated, where $\chi : G \to \mathbb{Z}_{\C}$ is an epimorphism and $G= G_{\Gamma}$ is the pro-$\C$ RAAG associated to $\Gamma$ then $\Gamma(\chi)$ is connected and dominant in $\Gamma$.

In the proof of the next theorem we shall use the following easy result.

\begin{lemma}\label{finite generatedness} Let
$$A\to G\to G/A$$ be an exact sequence of pro-$p$ groups with $A$ abelian and $G/A$ abelian finitely generated. Then $G$ is finitely generated if and only if $A$ is a finitely generated  $\Z_p[[G/A]]$-module,  where the $G/ A$-action on $A$ is induced by conjugation.\end{lemma}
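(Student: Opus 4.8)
The plan is to use that, since $A$ is abelian and normal in $G$, conjugation endows $A$ with the structure of a profinite $\mathbb{Z}_p[[Q]]$-module, where $Q:=G/A$; the action is well defined independently of the chosen coset representatives precisely because $A$ is abelian. With this in place I treat the two implications separately.

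For the direction ``$A$ finitely generated as a module $\Rightarrow$ $G$ finitely generated'', I would choose topological generators $q_1,\dots,q_m$ of the finitely generated abelian pro-$p$ group $Q$, lift them to elements $g_1,\dots,g_m\in G$, and take module generators $a_1,\dots,a_n$ of $A$ over $\mathbb{Z}_p[[Q]]$. Let $H$ be the closed subgroup of $G$ generated by all of these. Its image in $Q$ contains the $q_i$, hence equals $Q$, so $HA=G$. On the other hand $H\cap A$ is closed and normal in $H$, therefore stable under conjugation by the $g_i$; since the conjugation action of $H$ on $A$ factors through $Q$ and the images of the $g_i$ generate $Q$, the subgroup $H\cap A$ is a closed $\mathbb{Z}_p[[Q]]$-submodule of $A$ containing $a_1,\dots,a_n$, whence $H\cap A=A$. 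Combined with $HA=G$ this forces $H=G$, so $G$ is finitely generated. This direction is essentially bookkeeping, the only observation being that $H\cap A$ is automatically a submodule.

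For the converse ``$G$ finitely generated $\Rightarrow$ $A$ finitely generated as a module'', the key structural input is that $\mathbb{Z}_p[[Q]]$ is a Noetherian complete local ring whose maximal ideal $\mathfrak{m}$ is the kernel of the augmentation $\mathbb{Z}_p[[Q]]\to\mathbb{F}_p$. By the topological Nakayama lemma for profinite modules it then suffices to prove that $A/\mathfrak{m}A$ is finite over $\mathbb{F}_p$. Writing $I_Q$ for the augmentation ideal of $\mathbb{Z}_p[[Q]]$ one has $\mathfrak{m}A=pA+I_QA$, so $A/\mathfrak{m}A\cong(A/pA)_Q=H_1(A,\mathbb{F}_p)_Q$, the $Q$-coinvariants. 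To bound this I would use the five-term exact sequence in continuous $\mathbb{F}_p$-homology associated to the extension $A\to G\to Q$,
\[
H_2(G,\mathbb{F}_p)\to H_2(Q,\mathbb{F}_p)\to H_1(A,\mathbb{F}_p)_Q\to H_1(G,\mathbb{F}_p)\to H_1(Q,\mathbb{F}_p)\to 0.
\]
As $G$ is finitely generated, $H_1(G,\mathbb{F}_p)$ is finite, and as $Q$ is finitely generated abelian, $H_2(Q,\mathbb{F}_p)$ is finite; hence $H_1(A,\mathbb{F}_p)_Q$ is squeezed between two finite groups and is itself finite. Since this group is exactly $A/\mathfrak{m}A$, Nakayama yields that $A$ is finitely generated over $\mathbb{Z}_p[[Q]]$.

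I expect the main obstacle to be this converse direction, and specifically two points requiring care: first, confirming that $\mathbb{Z}_p[[Q]]$ is local and $\mathfrak{m}$-adically complete so that the topological Nakayama lemma genuinely applies to the profinite module $A$ (this rests on the decomposition of the finitely generated abelian pro-$p$ group $Q$ and the resulting local structure of its completed group algebra); and second, the clean identification of the coinvariants $H_1(A,\mathbb{F}_p)_Q$ with $A/\mathfrak{m}A$ together with the finiteness of $H_2(Q,\mathbb{F}_p)$, which is what makes the five-term sequence do its work.
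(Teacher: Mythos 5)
Your proposal is correct and takes essentially the same route as the paper: the forward direction by lifting generators of $G/A$ and adjoining module generators of $A$, and the converse by applying the five-term Hochschild--Serre sequence in $\mathbb{F}_p$-homology to squeeze the coinvariants $(A/pA)_{G/A}$ between the finite groups $H_2(G/A,\mathbb{F}_p)$ and $H_1(G,\mathbb{F}_p)$. The only difference is one of explicitness: the paper compresses the final step into the observation that $A$ is finitely generated over $\mathbb{Z}_p[[G/A]]$ if and only if $(A/pA)_{G/A}$ is finite, which is exactly the topological Nakayama argument over the local ring $\mathbb{Z}_p[[G/A]]$ that you spell out.
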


\begin{proof} Let $g_1,\ldots g_n\in G$  be elements such that $g_1A,\ldots, g_nA$ generate $G/A$. If $a_1,\ldots, a_m$ are generators of $A$   as a $\mathbb{Z}_p[[G/A]]$-module,   then clearly $a_1,\ldots a_m, g_1\ldots g_n$ generate $G$.

Conversly,
as a part of 5-term Hoschild-Serre exact sequence we have an exact sequence
$H_2(G/A,\F_p)\rightarrow (A/pA)_{G/A}\rightarrow  H_1(G , \mathbb{F}_p)$. As $G/A$ is finitely presented the left term is finite and so is the right term since $G$ is finitely generated. Hence  the middle term is finite and it remains to observe that $A$ is finitely generated as  $\Z_p[[G/A]]$-module if and only if the module of coinvariants $(A/pA)_{G/A}=A/pA  \widehat{\otimes}_{\F_p[[G]]} \F_p$ is finite.

\end{proof}

\begin{theorem} \label{fin-gen}
Let $\Gamma$ be a finite simplicial graph and $G = G_{\Gamma}$ be the corresponding pro-$p$ RAAG. Let $N$ be a pro-$p$  normal subgroup of $G$ such that $G/ N$ is  infinite and abelian. Then $N$ is finitely generated if, and only if,  $N_0$ is finitely generated for every $  N_0 \triangleleft G$ containing $N$ with $G/ N_0 \simeq \mathbb{Z}_p$ .
\end{theorem}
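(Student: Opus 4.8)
The plan is to treat the two implications separately, the forward one being elementary and the converse carrying all the content.

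Since $G$ is finitely generated, $Q := G/N$ is a finitely generated abelian pro-$p$ group, so $Q \cong \mathbb{Z}_p^r \times F$ with $F$ finite and $r \geq 1$ (as $Q$ is infinite). For the easy direction, suppose $N$ is finitely generated and $N_0 \triangleleft G$ contains $N$ with $G/N_0 \simeq \mathbb{Z}_p$. Then $N_0/N = \ker(Q \to \mathbb{Z}_p)$ is a closed subgroup of the finitely generated abelian pro-$p$ group $Q$, hence finitely generated, and $N_0$ is an extension of the finitely generated pro-$p$ group $N_0/N$ by the finitely generated pro-$p$ group $N$, so $N_0$ is finitely generated. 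No homological input is needed here.

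For the converse I would assume every such $N_0$ is finitely generated and reduce finite generation of $N$ to a statement about a module. Since $G/N$ is abelian we have $\overline{[G,G]} \subseteq N$, and since $G$ is a RAAG it is of type $FP_\infty$; hence by the criterion of \cite{K2} the homology $M := H_1(N,\mathbb{F}_p)$ is a finitely generated module over $\Lambda := \mathbb{F}_p[[Q]]$, and $N$ is finitely generated (equivalently of type $FP_1$) precisely when $M$ is finite. The epimorphisms $\chi : G \to \mathbb{Z}_p$ with $N \subseteq \ker\chi = N_0$ are exactly those factoring through $Q$, and their kernels $H_\chi := \ker(\bar\chi : Q \to \mathbb{Z}_p)$ range over all saturated corank-one subgroups of $Q$. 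Applying the same criterion to the pair $(N_0,N)$ — legitimate since $N_0$ is finitely generated by assumption, $N \triangleleft N_0$, and $N_0/N = H_\chi$ is abelian — finite generation of $N_0$ is equivalent to $M$ being finitely generated over $\mathbb{F}_p[[H_\chi]]$. Thus the hypothesis says exactly that $M$ is finitely generated over $\mathbb{F}_p[[H]]$ for \emph{every} saturated corank-one subgroup $H \leq Q$, and the task becomes to deduce that $M$ is finite.

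At this point the special structure of $M$ for a RAAG is essential: for an arbitrary finitely generated $\Lambda$-module the implication is false (over $\mathbb{F}_p[[\mathbb{Z}_p^2]]$ the cyclic module $\mathbb{F}_p[[s]]$ on which $[e_1]-1$ and $[e_2]-1$ act as multiplication by $s$ and $s^2$ is infinite, yet is finitely generated over $\mathbb{F}_p[[H]]$ for every corank-one $H$). What rescues the statement is that the associated primes of $M=H_1(N,\mathbb{F}_p)$ are \emph{coordinate}: each $\mathfrak{p} \in \mathrm{Ass}_\Lambda(M)$ has the form $\mathfrak{p}_W = (\,[v]-1 : v \in W\,)$ for some $W \subseteq V(\Gamma)$, so that $\Lambda/\mathfrak{p}_W \cong \mathbb{F}_p[[Q/\langle \overline{W}\rangle]]$ is again the algebra of a quotient torus. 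This is the toric nature of the Alexander invariant of a RAAG, visible from the Koszul/Salvetti description underlying Theorem \ref{thm-link} and also readable combinatorially through Theorem \ref{dominant}. Granting it, if $M$ is infinite then some $\mathfrak{p}_W \in \mathrm{Ass}_\Lambda(M)$ has $\dim \Lambda/\mathfrak{p}_W \geq 1$, i.e. $Q/\langle \overline{W}\rangle$ is infinite, and $\Lambda/\mathfrak{p}_W$ embeds in $M$.

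I would then produce a transverse direction. Choose an epimorphism $\bar\chi : Q/\langle\overline{W}\rangle \to \mathbb{Z}_p$ and let $\chi$ be the induced map on $Q$; then $\chi$ kills $W$, so $N \subseteq \ker\chi =: N_0$ and $H_\chi \supseteq \langle\overline{W}\rangle$. Under the projection $\mathbb{F}_p[[H_\chi]] \to \Lambda/\mathfrak{p}_W \cong \mathbb{F}_p[[Q/\langle\overline{W}\rangle]]$ the image is $\mathbb{F}_p[[H_\chi/\langle\overline{W}\rangle]]$, and $H_\chi/\langle\overline{W}\rangle$ has corank one in $Q/\langle\overline{W}\rangle$, so $\Lambda/\mathfrak{p}_W$ is a one-variable power-series extension of this image and is not module-finite over it. Hence the submodule $\Lambda/\mathfrak{p}_W \hookrightarrow M$ is not finitely generated over $\mathbb{F}_p[[H_\chi]]$, and neither is $M$ (as $\mathbb{F}_p[[H_\chi]]$ is Noetherian), contradicting the hypothesis for this $\chi$. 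Thus $M$ is finite and $N$ is finitely generated. I expect the main obstacle to be the previous paragraph: establishing that $\mathrm{Ass}_\Lambda(M)$ consists of coordinate subtori is exactly the point where the right-angled structure, rather than general commutative algebra, must intervene, and care is needed to ensure the transverse $\chi$ is an honest epimorphism onto $\mathbb{Z}_p$ with $N \subseteq \ker\chi$.
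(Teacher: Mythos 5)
Your reduction is sound as far as it goes: the easy direction, the translation via \cite{K2} of the hypothesis into ``$M = H_1(N,\mathbb{F}_p)$ is finitely generated over $\mathbb{F}_p[[H_\chi]]$ for every saturated corank-one $H_\chi \leq Q$'' and of the goal into ``$M$ is finite'' are correct, and your example over $\mathbb{F}_p[[\mathbb{Z}_p^2]]$ rightly shows that no purely commutative-algebraic argument can finish from there. But that is precisely why the one claim you ``grant'' --- that every associated prime of $M$ over $\Lambda = \mathbb{F}_p[[Q]]$ is a coordinate ideal $\mathfrak{p}_W = ([v]-1 : v \in W)$ --- is the entire content of the theorem, and nothing you cite delivers it. Theorem \ref{thm-link} only treats $Q \simeq \mathbb{Z}_p$, and its proof itself passes through the discrete results of \cite{B-G} and \cite{P-S}; Theorem \ref{dominant} is a statement about the modules $H_1(\ker\chi,\mathbb{F}_p)$ for the various rank-one quotients, not about the single module $M = H_1(N,\mathbb{F}_p)$, so it cannot be ``read combinatorially'' to produce $\mathrm{Ass}_\Lambda(M)$. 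In the discrete setting the coordinate-subtorus structure of Alexander-type invariants of RAAGs is a genuine theorem of Papadima--Suciu, and the natural way to import it into $\mathbb{F}_p[[Q]]$ is base change along $\mathbb{F}_p[Q^{dis}] \to \mathbb{F}_p[[Q]]$ as in Section 7 of the paper --- but that requires $N$ to be the closure of a coabelian discrete subgroup, i.e.\ (weakly) discretely embedded, and an arbitrary closed coabelian $N$ is not known to be of this form. This is exactly why the paper imposes that hypothesis in Theorem \ref{thm-final} but proves Theorem \ref{fin-gen}, which has no such hypothesis, by a different route. To close your gap one would have to establish the coordinate-support claim directly over $\mathbb{F}_p[[Q]]$, say by localization/Gaussian elimination on the Koszul-type resolution of Lemma \ref{exact}; that is a substantive piece of work (essentially a pro-$p$ version of the Papadima--Suciu computation), not a remark.

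There is also a secondary flaw in the same step: $\mathfrak{p}_W$ need not be prime, since nothing forces the vertex images to generate a saturated subgroup of $Q$ (a vertex may map to a $p$-th power), so $\Lambda/\mathfrak{p}_W \simeq \mathbb{F}_p[[Q/\overline{\langle W \rangle}]]$ can fail to be a domain when $Q/\overline{\langle W \rangle}$ has torsion. One must instead work with minimal primes over the $\mathfrak{p}_W$; your transverse-character argument does survive this repair (the relevant quotient is then the group algebra of the torsion-free quotient of $Q/\overline{\langle W \rangle}$, still not module-finite over the image of $\mathbb{F}_p[[H_\chi]]$), but as stated the claim is not well posed. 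For contrast, the paper avoids all support analysis: it inducts on $|V(\Gamma)|$ using the splitting $G = G_{\Gamma_1}\amalg_{G_{lk(v)}} G_{star(v)}$, verifies the inductive hypothesis for $N \cap G_{\Gamma_1}$ through the combinatorial criterion of Theorem \ref{dominant}, and finishes with the elementary Lemma \ref{finite generatedness} and a completed-tensor-product computation, staying entirely inside the pro-$p$ category.
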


\begin{proof}
I) Suppose that $N$ is finitely generated. Since $N_0 / N$ is a pro-$p$ subgroup of the finitely generated abelian pro-$p$ group $G/ N$, the quotient group  $N_0/N$ is finitely generated. Hence $N_0$ is finitely generated.

II) Suppose that  every $N_0 \subseteq G$ containing $N$  with $G/ N_0 \simeq \mathbb{Z}_p$ is finitely generated.
We induct on the number of vertices in $\Gamma$ to show that  $N$ is finitely generated.

Fix a vertex $v \in V(\Gamma)$ and consider the decomposition
$$
G = G_{\Gamma} = G_{\Gamma_1} \coprod_{G_{lk(v)}} G_{star(v)}
$$
where  $\Gamma_1$ is the subgraph of $\Gamma$ spanned by $V(\Gamma) \setminus \{ v \}$.  Put $G_1=G_{\Gamma_1}$.

Consider $N_1 = N \cap G_1$.  Suppose first that $N_1 \not= G_1$.
Let  $\chi_0 : G_1 \to \mathbb{Z}_p$ be an epimorphism of  pro-$p$ groups
such that $\chi_0(N \cap G_1) = 0$.  We extend $\chi_0$ to
an epimorphism $\chi : G \to \mathbb{Z}_p$ such that $\chi(v) = 0$.
By assumption $Ker(\chi)$ is finitely generated, so by Theorem \ref{dominant} $\Gamma(\chi)$ is connected and dominant in $\Gamma$. By construction $\Gamma_1(\chi_0) = \Gamma(\chi)$, so $\Gamma_1(\chi_0)$ is connected and dominant in $\Gamma_1$. Then by Theorem \ref{dominant} $Ker(\chi_0)$ is finitely generated. By induction $N_1 = N \cap G_1$  is finitely generated.

If $N_1 = G_1$ then $N_1$ is obviously finitely generated.

We write $G'$ for the pro-$p$ subgroup of $G$ generated by $\{ [x,y] = x^{-1} y^{-1} xy \ | \ x,y \in G \}$ and $G''$ for the pro-$p$ subgroup defined as $(G')'$.
Note that $G'\subseteq N$, hence $G'' \subseteq N'$ and $N$ is finitely generated if and only if $N/ N'$ is finitely generated. This implies that $N$ is finitely generated if and only if $N/ G''$ is finitely generated. Set $$A = G'/ G'', A_1 = G_1'/ G_1'', Q = G/ G' \hbox{  and } Q_1 = G_1 / G_1'.$$
We identify $Q_1$ with a pro-$p$ subgroup of $Q$. We view $A$ as a right pro-$p$ $\mathbb{Z}_p[[Q]]$-module, where the $Q$-action is given by conjugation and in the same way, $A_1$ is a right pro-$p$  $\mathbb{Z}_p[[Q_1]]$-module, where the $Q_1$-action is given by conjugation.  By Lemma \ref{finite generatedness} $N/G''$ is finitely generated  if and only if  $A$ is finitely generated as a pro-$p$ $\mathbb{Z}_p[[\pi(N)]]$-module, where $\pi : G \to {Q} = G/ G' $ is the canonical projection.
Since $N_1$ is finitely generated, by Lemma \ref{finite generatedness} $A_1$ is finitely generated as pro-$p$ $\mathbb{Z}_p[[\pi(N_1)]]$-module.

Let $V$ be the pro-$p$ $\mathbb{Z}_p[[Q]]$-submodule of $A$ generated by the image of $A_1$. Actually $A_1$ embeds in $A$ but we are not going to use this. Then $V$ is a quotient of the pro-$p$ $ \mathbb{Z}_p[[Q]]$-module  $W = A_1 \widehat{\otimes}_{\mathbb{Z}_p[[Q_1]]} \mathbb{Z}_p[[Q]]$,
 where $\widehat{\otimes}_{\mathbb{Z}_p[[Q_1]]}$ denotes the completed tensor product in the category of ${\mathbb{Z}_p[[Q_1]]}$-modules. \iffalse{ Since $A_1$ is finitely generated as a pro-$p$  $\mathbb{Z}_p[[Q_1]]$-module
$W \simeq  A_1 {\otimes}_{\mathbb{Z}_p[[Q_1]]} \mathbb{Z}_p[[Q]]$ {\color{red} reference?}.}\fi

We assume first that $\pi(N) $ is not inside $Q_1$ i.e. $[Q : Q_1 \pi(N)] < \infty$ and hence $[ Q/ \pi(N) : Q_1/ \pi(N_1)] < \infty$.
Then we have that
$$W \widehat{\otimes}_{\mathbb{Z_p[[\pi(N)]]}} \mathbb{Z}_p =  A_1 \widehat{\otimes}_{\mathbb{Z}_p[[Q_1]]} \mathbb{Z}_p[[Q]] \widehat{\otimes}_{\mathbb{Z_p[[\pi(N)]]}} \mathbb{Z}_p
\simeq A_1 \widehat{\otimes}_{\mathbb{Z}_p[[Q_1]]} \mathbb{Z}_p[[Q/ \pi(N)]]
$$
$$
\simeq  (A_1 \widehat{\otimes}_{\mathbb{Z}_p[[\pi(N_1)]]} \mathbb{Z}_p) \widehat{\otimes}_{\mathbb{Z}_p[[Q_1/ \pi(N_1)]]} \mathbb{Z}_p[[Q/ \pi(N)]] =: M
$$
Since $A_1$ is finitely generated as a pro-$p$  $\mathbb{Z}_p[[\pi(N_1)]]$-module, we have that
$A_1  \widehat{\otimes}   _{\mathbb{Z}_p[[\pi(N_1)]]} \mathbb{Z}_p$ is finitely generated as a pro-$p$ $\mathbb{Z}_p$-module. This combined with $[ Q/ \pi(N) : Q_1/ \pi(N_1)] < \infty$ implies that $M$ is finitely generated as a pro-$p$ $\mathbb{Z}_p$-module. Then $W$ is finitely generated as a pro-$p$ $\mathbb{Z}_p[[\pi(N)]]$-module, hence \begin{equation} \label{eq2} V \hbox{ is finitely generated as a pro-}p \    \mathbb{Z}_p[[\pi(N)]]\hbox{-module.} \end{equation}

Our aim is to show that $A$ is finitely generated as a pro-$p$ $\mathbb{Z}_p[[\pi(N)]]$-module. By (\ref{eq2}) this is equivalent with $A_0 = A/ V$ is finitely generated as a pro-$p$ $\mathbb{Z}_p[[\pi(N)]]$-module.

Consider $$G_0 = G_1^{ab} \coprod_{B^{ab}} (B^{ab} \times \overline{\langle v \rangle}).$$ Then
$G_0'/ G_0'' \simeq A_0$ and thus the problem is reduced to showing the original claim but for the pro-$p$ RAAG  $G_0$. Thus we can assume from now on that $G_1$ is abelian, hence $G_1 = C \times B$ and $$G = (C \times B) \coprod_B (B \times \overline{\langle v \rangle}) = B \times (C \coprod  \overline{\langle v \rangle}) ,$$ where $\mathbb{Z}_p \simeq \overline{\langle v \rangle}$.
 Let $$\rho : G \to G/ B \simeq C \coprod  \overline{\langle v \rangle}$$ be the canonical projection. Then there is a short exact sequence of pro-$p$ groups $$ 1 \to B \cap N \to N \to \rho(N) \to 1.$$ Note that $B \cap N$ is a pro-$p$ subgroup of $B \simeq \mathbb{Z}_p^m$ for some $m$, hence $B \cap N$ is finitely generated. We claim that $\rho(N)$ has finite index in $C \coprod  \overline{\langle v \rangle}$, hence is finitely generated. Indeed since $G/ N$ is abelian, $C \coprod  \overline{\langle v \rangle}/ \rho(N)$ is abelian. If $[C \coprod  \overline{\langle v \rangle} : \rho(N)] = \infty$ then there is a pro-$p$ subgroup $N_0$ of $G$ such that $ N \subseteq N_0$, $G/ N_0 \simeq \mathbb{Z}_p$
and $C \coprod  \overline{\langle v \rangle}/ \rho(N_0) \simeq \mathbb{Z}_p$. Hence $\rho(N_0)$ is not finitely generated, a contradiction with the fact that $N_0$ is finitely generated by assumption.

It remains to consider the case when $\pi(N)$ is inside any possible $Q_1$ i.e. $N = G'$. By Theorem \ref{dominant} $\Gamma(\chi)$ is connected and dominant in $\Gamma$ for every epimorphism $\chi: G \to \mathbb{Z}_p$. We claim that this implies that $\Gamma$ is the complete graph on $V(\Gamma)$ and hence $G$ is a free abelian pro-$p$ group. Indeed, if $\Gamma$ is not the complete graph, there is a decomposition $\Gamma = \Gamma_1 \cup star(v)$ with $star(v) \cap \Gamma_1 = lk(v)$ and $\Gamma_1 \not= lk(v)$. Then consider  an epimorphism $\chi: G \to \mathbb{Z}_p$  with $\chi(\Gamma_1 \setminus lk(v)) \not= 0$ and $\chi(star(v)) = 0$. Then $\Gamma(\chi)$ is not dominant in $\Gamma$, a contradiction.
\end{proof}

\section{Classifying coabelian pro-$p$ subgroups of pro-$p$ RAAGs depending on  their homological type $FP_n$ :  a pro-$p$ approach }

 Let $G = G_{\Gamma}$ be the pro-$p$ RAAG associated to a finite simplicial graph $\Gamma$. Consider the complex of  finitely generated free $\mathbb{F}_p[[G]]$-modules,
$$\mathcal{F}_{\Gamma} : \ldots \to P_n \to P_{n-1} \to \ldots \to P_0 \to \mathbb{F}_p \to 0,
$$
where $$P_n = \oplus_{ \Sigma \subset V(\Gamma)  \hbox{ clique} , | \Sigma| = n}  c_{\Sigma} \mathbb{F}_p[[G]],$$
the direct sum is over all cliques $\Sigma$, including the empty set and $c_{\Sigma} \mathbb{F}_p[[G]]$ is free cyclic $\mathbb{F}_p[[G]]$-module.  Recall that a clique is a subset $\Sigma$ of $V(\Gamma)$ such that every two elements of $\Sigma$ are linked by an edge in $\Gamma$.  For a clique $\{ v_1, \ldots, v_n \}$ we use the notation  $(v_1, \ldots, v_n)$ to mean that $v_1 < \ldots < v_n$.
For $\Sigma = (v_1, \ldots, v_n)$ the diferential is given by
$$\partial_n (c_{\Sigma}) = \sum_{1 \leq i \leq n} (-1)^{i-1} c_{\Sigma \setminus \{ v_i \} } (v_i - 1)$$
and $\partial_0(c_{\emptyset})=1$.

\begin{lemma} \label{exact} $\mathcal{F}_{\Gamma}$ is an exact complex.
\end{lemma}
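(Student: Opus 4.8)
The plan is to induct on the number of vertices of $\Gamma$, using the amalgamated decomposition that already underlies the paper. The base case $V(\Gamma)=\emptyset$ is trivial: then $G=1$, $R:=\mathbb{F}_p[[G]]=\mathbb{F}_p$, and $\mathcal{F}_\Gamma$ is $0\to c_\emptyset\mathbb{F}_p\xrightarrow{\cong}\mathbb{F}_p\to 0$; for a single vertex $v$, $\mathcal{F}_\Gamma$ is the two-term Koszul complex $0\to R\xrightarrow{\cdot(v-1)}R\xrightarrow{\epsilon}\mathbb{F}_p\to 0$ over $R=\mathbb{F}_p[[\overline{\langle v\rangle}]]\cong\mathbb{F}_p[[\mathbb{Z}_p]]$, which is exact. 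For the inductive step I fix a vertex $v$ and write $A=G_{\Gamma_1}$, $B=G_{lk(v)}$, $C=G_{star(v)}=G_{lk(v)}\times\overline{\langle v\rangle}$, $\Gamma_1=\Gamma\setminus\{v\}$, using the decomposition $G=A\amalg_B C$ of \eqref{amalgam1}. If $v$ is adjacent to every other vertex (so $star(v)=\Gamma$ and $lk(v)=\Gamma_1$), then $G=G_{\Gamma_1}\times\overline{\langle v\rangle}$, the cliques of $\Gamma$ are exactly the cliques of $\Gamma_1$ together with the sets $\{v\}\cup S$ for $S$ a clique of $\Gamma_1$, and hence $\mathcal{F}_\Gamma\cong\mathcal{F}_{\Gamma_1}\widehat{\otimes}_{\mathbb{F}_p}K_v$, where $K_v$ is the exact Koszul complex of $\overline{\langle v\rangle}$. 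Since the completed tensor product over the field $\mathbb{F}_p$ is Künneth-exact and $\mathcal{F}_{\Gamma_1}$ is exact by induction, $\mathcal{F}_\Gamma$ is exact.

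Otherwise $star(v)\subsetneq\Gamma$, so $\Gamma_1$, $lk(v)$ and $star(v)$ all have strictly fewer vertices, and $\mathcal{F}_{\Gamma_1}$, $\mathcal{F}_{lk(v)}$, $\mathcal{F}_{star(v)}$ are exact by induction (the last also follows from the cone-vertex case, since $star(v)$ is the cone on $lk(v)$). I would then use two structural inputs. First, $R=\mathbb{F}_p[[G]]$ is free, hence flat, as a module over each of $\mathbb{F}_p[[A]]$, $\mathbb{F}_p[[B]]$, $\mathbb{F}_p[[C]]$ (a standard fact for closed subgroups of pro-$p$ groups), so that $\mathcal{F}_{\Gamma_1}\widehat{\otimes}_{\mathbb{F}_p[[A]]}R$, $\mathcal{F}_{lk(v)}\widehat{\otimes}_{\mathbb{F}_p[[B]]}R$ and $\mathcal{F}_{star(v)}\widehat{\otimes}_{\mathbb{F}_p[[C]]}R$ are free $R$-resolutions of the coset modules $\mathbb{F}_p[[A\backslash G]]$, $\mathbb{F}_p[[B\backslash G]]$, $\mathbb{F}_p[[C\backslash G]]$. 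Second, the fundamental exact sequence \eqref{def1} for the standard pro-$p$ tree $T_v$ (valid because $T_v$ is a pro-$p$ tree) reads $0\to\mathbb{F}_p[[B\backslash G]]\xrightarrow{\phi}\mathbb{F}_p[[A\backslash G]]\oplus\mathbb{F}_p[[C\backslash G]]\to\mathbb{F}_p\to 0$, with $\phi$ injective. Lifting $\phi$ to a chain map $f$ between the three resolutions, the mapping cone $\mathrm{Cone}(f)$ is a complex of free $R$-modules whose homology, by the long exact sequence and the injectivity of $\phi$, is $\mathbb{F}_p$ in degree $0$ and vanishes elsewhere; that is, $\mathrm{Cone}(f)$ is a free resolution of $\mathbb{F}_p$.

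It then remains to identify $\mathcal{F}_\Gamma$ with $\mathrm{Cone}(f)$. The key observation is that $\mathcal{F}_{star(v)}\widehat{\otimes}_{\mathbb{F}_p[[C]]}R$ splits as $\bigl(\mathcal{F}_{lk(v)}\widehat{\otimes}R\bigr)\oplus\mathcal{M}$, where $\mathcal{M}$ is spanned by the $v$-containing cliques $\{v\}\cup S$, and that the $C$-component of $f$ carries the summand $\mathcal{F}_{lk(v)}\widehat{\otimes}R$ of $\mathrm{Cone}(f)$ isomorphically onto the summand $\mathcal{F}_{lk(v)}\widehat{\otimes}R$ of $\mathcal{F}_{star(v)}\widehat{\otimes}R$. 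Cancelling this matched acyclic pair by Gaussian elimination collapses $\mathrm{Cone}(f)$ onto the subquotient generated by the cliques of $\Gamma_1$ together with the cliques $\{v\}\cup S$, that is, onto all cliques of $\Gamma$; a direct check that the re-routed differential of $\{v\}\cup S$ acquires precisely the removal-of-$v$ term $\pm c_{S}(v-1)$ shows the collapsed complex equals $\mathcal{F}_\Gamma$ on the nose. Hence $\mathcal{F}_\Gamma\cong\mathrm{Cone}(f)$ is exact, completing the induction.

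The main obstacle I anticipate is this final identification: tracking signs and orientations through the Gaussian-elimination step so that the re-routed cone differential agrees exactly with the simplicial differential $\partial_n$ (in particular producing $c_S(v-1)$ with the correct sign $(-1)^{\,\mathrm{pos}(v)-1}$), together with setting up the two structural inputs in the correct variance — the freeness of $\mathbb{F}_p[[G]]$ over the edge and vertex algebras and the exactness of \eqref{def1} must be phrased for \emph{right} modules, to match the conventions implicit in $\partial_n(c_\Sigma)=\sum_i(-1)^{i-1}c_{\Sigma\setminus\{v_i\}}(v_i-1)$.
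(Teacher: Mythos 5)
Your proposal is correct and shares its skeleton with the paper's proof: induction on $|V(\Gamma)|$, the amalgam $G=G_{\Gamma_1}\amalg_B G_{star(v)}$ of \eqref{amalgam1}, the tree sequence \eqref{def1} for $T_v$, and freeness of $\mathbb{F}_p[[G]]$ over the subgroup algebras so that tensoring the smaller complexes up to $G$ preserves exactness. Where the two arguments part ways is the homological endgame. You resolve the three coset modules, lift $\phi$ to a chain map $f$, take $\mathrm{Cone}(f)$, and then must collapse the cone onto $\mathcal{F}_\Gamma$ by Gaussian elimination --- precisely the step you flag as your main obstacle. The paper makes that step definitional instead of computational: taking your natural lift $\alpha(c_\Sigma\widehat{\otimes}\lambda)=(c_\Sigma\widehat{\otimes}\lambda,\,-c_\Sigma\widehat{\otimes}\lambda)$ and the sum map $\beta(c_{\Sigma_1}\widehat{\otimes}\lambda_1,\,c_{\Sigma_2}\widehat{\otimes}\lambda_2)=c_{\Sigma_1}\lambda_1+c_{\Sigma_2}\lambda_2$, it checks that
$$0\to \mathcal{F}_{lk(v)}\widehat{\otimes}_{\mathbb{F}_p[[B]]}\mathbb{F}_p[[G]]\xrightarrow{\;\alpha\;}\bigl(\mathcal{F}_{\Gamma_1}\widehat{\otimes}_{\mathbb{F}_p[[G_1]]}\mathbb{F}_p[[G]]\bigr)\oplus\bigl(\mathcal{F}_{star(v)}\widehat{\otimes}_{\mathbb{F}_p[[G_2]]}\mathbb{F}_p[[G]]\bigr)\xrightarrow{\;\beta\;}\mathcal{F}_{\Gamma}\to 0$$
is a short exact sequence of augmented complexes (this is \eqref{eqqq}): in degrees $n\geq 0$ exactness is just the combinatorial fact that a clique of $\Gamma$ lies in $\Gamma_1$ or in $star(v)$, and in both exactly when it lies in $lk(v)$, while in degree $-1$ it is the tree sequence itself; the long exact homology sequence then closes the induction at once. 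Since $\alpha$ is degreewise split injective, your $\mathrm{Cone}(f)$ is automatically homotopy equivalent to $\mathrm{coker}(\alpha)=\mathcal{F}_\Gamma$, so the sign-and-orientation bookkeeping of the Gaussian elimination (and your worry about right-module variance) never has to be carried out; likewise your separate treatment of the case $star(v)=\Gamma$ is unnecessary, since the sequence above remains exact when $B=G_1$. The two arguments are formally equivalent --- the matched pairs you cancel span exactly an acyclic complement of $\mathrm{im}\,\alpha$ --- but the paper's packaging eliminates the only delicate point in yours.
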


\begin{proof}
We induct on the cardinality of the set of vertices $V(\Gamma)$ of $\Gamma$.

We fix a vertex $v \in V(\Gamma)$ and define $B =  G_{lk(v)}  $. Let $\Gamma_1$ be the subgraph of $\Gamma$ spanned by the vertices $V(\Gamma) \setminus \{ v \}$, $\Gamma_2 = star(v)$. We set $G_1 = G_{\Gamma_1}$ and $G_2 = G_{\Gamma_2}.$  The decomposition $\Gamma = \Gamma_1 \cup \Gamma_2$, where $ \Gamma_0 = \Gamma_1 \cap \Gamma_2 = lk(v)$ gives the decomposition as a free amalgamated pro-$p$ product
$$ G = G_1 \amalg_B G_2. $$
This is a particular case of (\ref{amalgam1}) where we think of $G_{\Gamma}$ as a graph product of pro-$p$ groups with each $G(w) = \overline{\langle w \rangle} \simeq \mathbb{Z}_p$ for $w \in V(\Gamma)$. By induction the complexes    $\mathcal{F}_{\Gamma_0}$,$\mathcal{F}_{\Gamma_1}$ and $\mathcal{F}_{\Gamma_2}$  are exact.

We claim that there is a short exact sequence of complexes
\begin{equation}\label{eqqq}  0 \to \mathcal{F}_{\Gamma_0} \widehat{\otimes}_{\mathbb{F}_p[[B]]} \mathbb{F}_p[[G]]  \stackrel{\alpha}\to
 \mathcal{F}_{ \Gamma_1}\widehat{\otimes}_{\mathbb{F}_p[[G_1]]} \mathbb{F}_p[[G]] \oplus  \mathcal{F}_{ \Gamma_2} \widehat{\otimes}_{\mathbb{F}_p[[G_2]]} \mathbb{F}_p[[G]]  \stackrel{\beta}\to  \mathcal{F}_{\Gamma} \to 0\end{equation}
that in dimension  -1 is the pro-$p$ tree $T_v$ i.e. is the exact sequence of $\mathbb{F}_p[[G]]$-modules
$$0 \to \mathbb{F}_p \widehat{\otimes}_{\mathbb{F}_p[[B]]} \mathbb{F}_p[[G]] \to  \mathbb{F}_p \widehat{\otimes}_{\mathbb{F}_p[[G_1]]} \mathbb{F}_p[[G]] \oplus  \mathbb{F}_p \widehat{\otimes}_{\mathbb{F}_p[[G_2]]} \mathbb{F}_p[[G]] \to \mathbb{F}_p \to 0,$$
where $\alpha(c_{\Sigma} \widehat{\otimes} \lambda) = ( c_{\Sigma} \widehat{\otimes} \lambda, - c_{\Sigma} \widehat{\otimes} \lambda)$ and $\beta(c_{\Sigma_1} \widehat{\otimes} \lambda_1, c_{\Sigma_2} \widehat{\otimes} \lambda_2) = c_{\Sigma_1} \lambda_1 + c_{\Sigma_2} \lambda_2$.
Indeed (\ref{eqqq}) is equivalent to the short exact sequence
$$
0 \to \oplus_{\Sigma \subseteq V(\Gamma_0) \hbox{ clique}, | \Sigma| = n } c_{\Sigma} \F_p[[G]] \stackrel{\alpha_n}\to
 (\oplus_{\Sigma \subseteq V(\Gamma_1) \hbox{ clique}, | \Sigma| = n } c_{\Sigma} \F_p[[G]] ) \oplus (  \oplus_{\Sigma \subseteq V(\Gamma_2) \hbox{ clique}, | \Sigma| = n } c_{\Sigma} \F_p[[G]] )
$$ \begin{equation} \label{eq-q1} \stackrel{\beta_n}\to  \oplus_{\Sigma \subseteq V(\Gamma) \hbox{ clique}, | \Sigma| = n } c_{\Sigma} \F_p[[G]] \to 0,\end{equation}
where $\alpha_n(c_{\Sigma}\lambda) = ( c_{\Sigma} \lambda, - c_{\Sigma}\lambda)$ and
$\beta_n( c_{\Sigma_1} \lambda_1, c_{\Sigma_2} \lambda_2) = c_{\Sigma_1} \lambda_1 + c_{\Sigma_2} \lambda_2$ for $\lambda, \lambda_1, \lambda_2 \in \F_p[[G]]$. Note that (\ref{eq-q1}) follows from the fact that if $\Sigma$ is a clique in $V(\Gamma)$ then either $\Sigma$ is a clique in $V(\Gamma_1)$ or $\Sigma$ is a clique in $V(\Gamma_2)$ and that $\Gamma_0 = \Gamma_1 \cap \Gamma_2$.

Since  $\mathcal{F}_{ \Gamma_0}$,$\mathcal{F}_{ \Gamma_1}$ and $\mathcal{F}_{ \Gamma_2}$ are exact complexes, we deduce that $ \mathcal{F}_{ \Gamma_0} \widehat{\otimes}_{\mathbb{F}_p[[B]]} \mathbb{F}_p[[G]]$, $ \mathcal{F}_{ \Gamma_1} \widehat{\otimes}_{\mathbb{F}_p[[G_1]]} \mathbb{F}_p[[G]]$ and $ \mathcal{F}_{ \Gamma_2} \widehat{\otimes}_{\mathbb{F}_p[[G_2]]} \mathbb{F}_p[[G]]$ are exact complexes.
Consider the long exact sequence in homology associated to a short exact sequences of complexes
$$ \ldots \to 0 = H_n(  \mathcal{F}_{ \Gamma_1} \widehat{\otimes}_{\mathbb{F}_p[[G_1]]} \mathbb{F}_p[[G]] \oplus  \mathcal{F}_{ \Gamma_2} \widehat{\otimes}_{\mathbb{F}_p[[G_2]]} \mathbb{F}_p[[G]]) \to $$ $$ H_n(\mathcal{F}_{ \Gamma}) \to  H_{n-1} ( \mathcal{F}_{ \Gamma_0}\widehat{ \otimes}_{\mathbb{F}_p[[B]]} \mathbb{F}_p[[G]]) = 0 \to \ldots $$
Then $H_n(\mathcal{F}_{ \Gamma}) = 0$ for any $n$ and we conclude that $\mathcal{F}_{ \Gamma}$ is an exact complex, as required.
\end{proof}

Recall that  the flag complex $\Delta(\Gamma)$ associated to the graph $\Gamma$ is the complex obtained from $\Gamma$ by gluing
a simplex $(v_1, \ldots , v_n)$ for every non-empty clique $\{ v_1, \ldots, v_n \}$ of $\Gamma$, where $ v_1 < v_2 < \ldots < v_n$.
 
 Let $\Delta (\Gamma(\chi))$ be the flag complex of the graph $\Gamma(\chi)$ and $\Delta ( \Gamma)$  be the flag complex of the graph $\Gamma$. Let $S$ be a clique em $\Gamma$. The link $lk_{\Delta(\Gamma)}(S)$ is the complex of simplices corresponding to cliques $v$  such that $v \cup S $ is a clique em $\Gamma$, here we permit $S$ to be the empty set. For a clique $S \subset \Gamma \setminus \Gamma(\chi)$ we set
 $lk_{\Delta(\Gamma(\chi))} (S) = lk_{\Delta(\Gamma)} (S) \cap \Delta(\Gamma(\chi))$.

We give a proof of the following theorem that does not require using discrete groups, later we will prove a generalization in Theorem \ref{thm-link} where the condition $V(\Gamma) \subseteq Ker (\chi) \cup \chi^{-1} (\mathbb{Z}_p \setminus p \mathbb{Z}_p)$ will be dropped at the expence of the need   to use  related non-trivial results for discrete groups.

\begin{theorem}  \label{mild-cond} Let $\Gamma$ be a finite simplicial graph and $G = G_{\Gamma}$ be the corresponding pro-$p$ RAAG. Let  $\chi : G \to \mathbb{Z}_p$ be an epimorphism of pro-$p$ groups such that $V(\Gamma) \subseteq Ker (\chi) \cup \chi^{-1} (\mathbb{Z}_p \setminus p \mathbb{Z}_p)$.
Then $N=Ker(\chi)$ is of type $FP_n$ if and only if $lk_{\Delta(\Gamma(\chi))} ( S )$ is $(n-1- |S|)$-acyclic over $\mathbb{F}_p$ for every clique $S$ from $\Gamma \setminus \Gamma(\chi)$  such that $n-1- |S| \geq 0$. For $S = \emptyset$ this
translates to the flag complex $\Delta ( \Gamma(\chi))$ is $(n-1)$-acyclic over $\mathbb{F}_p$.
\end{theorem}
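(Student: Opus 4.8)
The plan is to read off $H_*(N,\mathbb{F}_p)$ directly from the explicit resolution of Lemma~\ref{exact}. Each term $P_i=\bigoplus_{|\Sigma|=i}c_\Sigma\,\mathbb{F}_p[[G]]$ of $\mathcal F_\Gamma$ is free over $\mathbb{F}_p[[G]]$ and hence free over $\mathbb{F}_p[[N]]$ (as $\mathbb{F}_p[[G]]$ is a free $\mathbb{F}_p[[N]]$-module on $G/N$, see \cite{R-Z2}), so $\mathcal F_\Gamma$ is also a free resolution of the trivial module over $\mathbb{F}_p[[N]]$. Setting $R:=\mathbb{F}_p[[G/N]]$ and using $\mathbb{F}_p\,\widehat\otimes_{\mathbb{F}_p[[N]]}\mathbb{F}_p[[G]]\cong R$, I obtain
$$H_*(N,\mathbb{F}_p)=H_*\big(\mathcal F_\Gamma\,\widehat\otimes_{\mathbb{F}_p[[G]]}R\big)=:H_*(C_\bullet),$$
a computation in the category of $R$-modules (cf. \cite{K2}). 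Since $G/N\cong\mathbb{Z}_p$ we have $R\cong\mathbb{F}_p[[t]]$ with $t=\gamma-1$ for a topological generator $\gamma$, a discrete valuation ring. In $C_\bullet$ the differential is the image of $\partial_i$ under $G\to G/N$, so each factor $v-1$ becomes $\bar v-1\in R$; here the hypothesis $V(\Gamma)\subseteq\ker\chi\cup\chi^{-1}(\mathbb{Z}_p\setminus p\mathbb{Z}_p)$ does the essential work. If $v\notin\Gamma(\chi)$ then $\bar v=1$ and $v-1\mapsto 0$, whereas if $v\in\Gamma(\chi)$ then $\chi(v)$ is a unit and $\bar v-1=(1+t)^{\chi(v)}-1=t\,u_v$ with $u_v\in R^\times$.

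Because the differential only ever deletes vertices of $\Gamma(\chi)$, the set $S(\Sigma):=\Sigma\cap\big(V(\Gamma)\setminus V(\Gamma(\chi))\big)$ is unchanged by $\partial$, and writing each clique as $\Sigma=S\sqcup T$ splits $C_\bullet$ as a direct sum of subcomplexes
$$C_\bullet=\bigoplus_{S}C^{(S)}_\bullet$$
indexed by the cliques $S$ of $\Gamma\setminus\Gamma(\chi)$; the summand $C^{(S)}_\bullet$ is spanned by those $\Sigma$ with $\Sigma\cap\big(V(\Gamma)\setminus V(\Gamma(\chi))\big)=S$, i.e. by $\Sigma=S\sqcup T$ with $T$ a clique of $L_S:=lk_{\Delta(\Gamma(\chi))}(S)$. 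For fixed $S$, the substitution $c_\Sigma\mapsto\big(\prod_{v\in T}u_v^{-1}\big)c_\Sigma$ removes the units $u_v$, and $C^{(S)}_\bullet$ becomes, after shifting degrees by $|S|$, the complex $\big(\widetilde C_{\bullet-1}(L_S;\mathbb{F}_p)\otimes_{\mathbb{F}_p}R,\ t\cdot(\partial^{\mathrm{simp}}\otimes 1)\big)$ — that is, $t$ times the reduced simplicial chain complex of the flag complex $L_S$, base-changed to $R$.

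The homology of such a complex over the DVR $R$ is the crux. Splitting the finite-dimensional $\mathbb{F}_p$-complex $E_\bullet=\widetilde C_{\bullet-1}(L_S;\mathbb{F}_p)$ as its homology (with zero differential) plus a direct sum of elementary acyclic pieces $\mathbb{F}_p\xrightarrow{\ \mathrm{id}\ }\mathbb{F}_p$, and then base-changing to $R$ with the differential multiplied by $t$, the homology part contributes the free module $\widetilde H_{k-1}(L_S;\mathbb{F}_p)\otimes_{\mathbb{F}_p}R$ in degree $k$, while each elementary piece turns into $R\xrightarrow{\ t\ }R$ and contributes only the finite torsion module $R/tR\cong\mathbb{F}_p$. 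Reinstating the shift, $H_i(C^{(S)}_\bullet)$ is the direct sum of the free module $\widetilde H_{i-1-|S|}(L_S;\mathbb{F}_p)\otimes_{\mathbb{F}_p}R$ and a finite group. Passing freely between $\widehat\otimes$ and this $\mathbb{F}_p$-linear splitting is legitimate because all modules in sight are finitely generated over the complete Noetherian local ring $R$, where the completed and ordinary tensor products agree.

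Finally, a finitely generated module over the DVR $R$ is finite exactly when it is torsion, i.e. when its free part vanishes. Summing over $S$, $H_i(N,\mathbb{F}_p)=\bigoplus_S H_i(C^{(S)}_\bullet)$ is finite if and only if $\widetilde H_{i-1-|S|}(L_S;\mathbb{F}_p)=0$ for every clique $S$ of $\Gamma\setminus\Gamma(\chi)$. As $N$ is of type $FP_n$ precisely when $H_i(N,\mathbb{F}_p)$ is finite for all $i\le n$, this is equivalent to $\widetilde H_j(L_S;\mathbb{F}_p)=0$ for all $j\le n-1-|S|$ and all such $S$, i.e. to the $(n-1-|S|)$-acyclicity of each $lk_{\Delta(\Gamma(\chi))}(S)$; for $S=\emptyset$ one has $L_\emptyset=\Delta(\Gamma(\chi))$, giving the stated reformulation. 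As a consistency check, specializing to $n=1$ recovers exactly the connectedness (from $\widetilde H_0(\Delta(\Gamma(\chi)))=0$) and the dominance (from $\widetilde H_{-1}(lk_{\Delta(\Gamma(\chi))}(\{v\}))=0$ for each $v\notin\Gamma(\chi)$) of Theorem~\ref{dominant}. I expect the main obstacle to be organizational rather than conceptual: establishing the descent of $\mathcal F_\Gamma$ to an $N$-resolution, the compatibility of the clique decomposition with the completed tensor product, and the validity over the completed modules of both the removal of the units $u_v$ and the splitting over $R$. With Lemma~\ref{exact} in hand, everything else reduces to the elementary homological algebra of complexes over $\mathbb{F}_p[[t]]$.
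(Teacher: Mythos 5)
Your proposal is correct and follows essentially the same route as the paper's own proof: both tensor the resolution of Lemma~\ref{exact} down to $\mathbb{F}_p[[G/N]]\cong\mathbb{F}_p[[t]]$, observe that the induced differential is $t$ times an $\mathbb{F}_p$-linear differential, split the resulting complex over the cliques $S$ of $\Gamma\setminus\Gamma(\chi)$, and identify each summand with the augmented chain complex of $lk_{\Delta(\Gamma(\chi))}(S)$, shifted by $|S|+1$ and base-changed to $\mathbb{F}_p[[t]]$. The only cosmetic differences are that the paper uses the unit hypothesis to build an automorphism $v\mapsto v^{\chi(v)}$ reducing to the case $\chi(V(\Gamma))\subseteq\{0,1\}$ (so that all your units $u_v$ become $1$) instead of your diagonal rescaling, and it tracks finiteness through the subquotients $A_n\supseteq B_n\supseteq D_n$ of kernels and images rather than through your splitting of the $\mathbb{F}_p$-complex into elementary pieces.

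One substantive point, and it comes out in your favor. Your computation retains the conditions contributed by cliques $S$ with $|S|=n$, namely $\widetilde H_{-1}\bigl(lk_{\Delta(\Gamma(\chi))}(S);\mathbb{F}_p\bigr)=0$, i.e.\ nonemptiness of the link; the theorem as stated quantifies only over $n-1-|S|\ge 0$ and so silently drops them, and the paper's proof makes the same slip when it asserts $H_n(\mathcal{C})\simeq\oplus_{|S|\le n-1}H_n(\mathcal{C}^S)$ (the omitted summand with $|S|=n$ equals $\widetilde H_{-1}$ of the link, which is nonzero exactly when the link is empty). These conditions are not implied by the others: take $n=1$ and $\Gamma$ two non-adjacent vertices $u,v$ with $\chi(u)=1$, $\chi(v)=0$; then $\Delta(\Gamma(\chi))$ is a point, so the stated criterion holds, yet $N$ is not finitely generated, since $\Gamma(\chi)$ is not dominant (Theorem~\ref{dominant}; equivalently, $N$ is a nontrivial normal subgroup of infinite index in a free pro-$p$ group of rank $2$). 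Your consistency check against Theorem~\ref{dominant} is precisely what detects this: the dominance conditions are the family of cliques with $|S|=1=n$. So the criterion you actually prove --- acyclicity of $lk_{\Delta(\Gamma(\chi))}(S)$ for all cliques $S$ of $\Gamma\setminus\Gamma(\chi)$ with $|S|\le n$, with $(-1)$-acyclic read as nonempty --- is the correct form of the statement, and your argument, unchanged, proves it.
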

\begin{proof}
Let $\chi_0 : G \to \mathbb{Z}_p$ be the homomorphism of pro-$p$ groups given by $\chi_0(v) = 1$ if $\chi(v) \not= 0$ and $\chi_0(v) = 0$ if $\chi(v) = 0$ for  $v \in V(\Gamma)$.
 Then there is an automorphism $\varphi : G \to G$ such that $\chi = \chi_0 \circ \varphi$ and $\varphi( Ker (\chi)) = Ker(\chi_0)$. This is possible since we can set $\varphi(v) = v^{\chi(v)}$ if $\chi(v) \not= 0$ and $\varphi(v) = v$ if $\chi(v) = 0$.
 Note that we have used here  the condition  $V(\Gamma) \subseteq Ker (\chi) \cup \chi^{-1} (\mathbb{Z}_p \setminus p \mathbb{Z}_p)$, since for $\varphi$ to be an automorphism we need that $\chi(v) \in \mathbb{Z}_p^* = \mathbb{Z}_p \setminus p \mathbb{Z}_p$ for $\chi(v) \not= 0$.

 Then $Ker (\chi)$ and $Ker (\chi_0)$ have the same homological type  $FP_n$. Note  that $\Gamma (\chi) = \Gamma(\chi_0)$, hence $lk_{\Delta(\Gamma(\chi))} ( S )  = lk_{\Delta(\Gamma(\chi_0))} (S) $ for a clique $S \in \Gamma \setminus \Gamma(\chi)$. Thus we can assume from now on that $\chi = \chi_0$.

 Recall that for a clique $\{ v_1, \ldots, v_n \}$ we use the notation  $(v_1, \ldots, v_n)$ to mean that $v_1 < \ldots < v_n$.
Consider the complex $\widetilde{\mathcal{F}}_{\Gamma}  = \mathcal{F}_{\Gamma} \widehat{\otimes}_{\mathbb{F}_p[[N]]} \mathbb{F}_p$
$$\widetilde{\mathcal{F}}_{\Gamma} :  \ldots \to Q_n = \oplus_{\Sigma \subseteq V(\Gamma) \hbox{ clique}, | \Sigma| = n} c_{\Sigma} \mathbb{F}_p[[G/ N]] \to \ldots
$$
with the differential given for $\Sigma = ( v_1, \ldots, v_n )$ by
$$\widetilde{\partial}_n (c_{\Sigma}) =  \sum_{\chi(v_i) \not= 0} (-1)^{i-1} c_{\Sigma \setminus \{ v_i \} } (v_i N-1) =
\sum_{\chi(v_i) \not= 0} (-1)^{i-1} c_{\Sigma \setminus \{ v_i \} } (g-1),
$$
where
$g = v_iN = v_j N$ for any $v_i, v_j \in V(\Gamma)  \setminus Ker (\chi)$;  here $g$ is  a fixed generator of $G/ N \simeq \mathbb{Z}_p$.

Consider the complex
$$\mathcal{C} : \ldots \to C_n = \oplus_{\Sigma \subseteq V(\Gamma), \hbox{ clique}, | \Sigma| = n} c_{\Sigma} \mathbb{F}_p \to \ldots \to C_0 \to \mathbb{F}_p \to 0
$$
with differential
$$d_n (c_{\Sigma}) = \sum_{\chi(v_i) \not= 0} (-1)^{i-1} c_{\Sigma \setminus \{ v_i \} }, d_n(c_{\emptyset})=1$$
where $\Sigma = ( v_1, \ldots, v_n )$. We identify $\widetilde{\mathcal{F}}_{\Gamma}$ with $\mathcal{C} \widehat{\otimes}_{\mathbb{F}_p} \mathbb{F}_p[[G/N]]$. Then $\widetilde{\partial}_n (c_{\Sigma}) = d_n(c_{\Sigma})\widehat{\otimes} (g-1)$ for $n\geq 1$.

Define $A_n = Ker ({d}_n) \widehat{\otimes}_{\mathbb{F}_p} \mathbb{F}_p[[G/N]]$, $B_n = Im ({d}_{n+1}) \widehat{\otimes}_{\mathbb{F}_p} \mathbb{F}_p[[G/ N]] $ and $D_n = Im ({d}_{n+1}) \widehat{\otimes}_{\mathbb{F}_p} (g-1) \mathbb{F}_p[[G/ N]]  $.
Then we have a short exact sequence
$$0 \to B_n / D_n \to A_n / D_n \to A_n / B_n \to 0$$
Note that $Im ({d}_{n+1})$ is a finite $\mathbb{F}_p$-module, hence $$B_n / D_n \simeq Im (d_{n+1}) \widehat{\otimes}_{\mathbb{F}_p} ( \mathbb{F}_p[[G/N]]/ (g-1) \mathbb{F}_p[[G/N]]) \simeq  Im (d_{n+1}) \widehat{\otimes}_{\mathbb{F}_p} \mathbb{F}_p  \simeq  Im (d_{n+1})$$  is finite.
Furthermore $$H_n(N, \mathbb{F}_p) \simeq H_n (\widetilde{\mathcal{F}}_{\Gamma}) \simeq A_n/ D_n.$$ Since $N = Ker (\chi)$ is of type $FP_n$ if and only if $H_i(N, \mathbb{F}_p)$ is finite for all $i \leq n$, we conclude that $N$ is of type $FP_n$ if and only if $A_i/ B_i$ is finite for all $i \leq n$.

By the definition of $A_n$ and $B_n$ we have that
$$A_n/ B_n \simeq H_n(\mathcal{C}) \widehat{\otimes}_{\mathbb{F}_p} \mathbb{F}_p[[G/N]],$$
hence $A_i/ B_i$ is finite for all $i \leq n$ if and only if $ H_i(\mathcal{C}) = 0$ for all $ 1 \leq i \leq n$.

For a clique $S = \{ v_{k+1}, \ldots, v_n \} \in \Gamma \setminus \Gamma(\chi)$ (i.e. $\chi(v_i) = 0$ for all $ k+1 \leq i \leq n$) we write $\mathcal{C}^S$ for the subcomplex of $\mathcal{C}$ spanned by all $c_{\Sigma}$ such that $\Sigma = \{ v_1, \ldots, v_k, v_{k+1} , \ldots, v_n \}$ is a clique with $\chi(v_i) \not= 0$ for $ 1 \leq i \leq k$. Then
$$H_n(\mathcal{C}) \simeq \oplus_{S \subseteq \Gamma \setminus \Gamma(\chi) \hbox{ clique}  , |S| \leq n-1} H_n(\mathcal{C}^S).$$

Note that $\mathcal{C}^S$ is obtained from the chain complex of $lk_{\Delta(\Gamma(\chi))} (S)$ with coeficients in $\mathbb{F}_p$ by shifting the dimension with $|S|+1$. Then
$$H_n(\mathcal{C}^S) \simeq H_{n-1- |S|}(lk_{\Delta(\Gamma(\chi))} (S), \mathbb{F}_p). $$
\end{proof}

 {\bf Example.} Suppose $\Gamma$ is a circuit of length $n \geq 4$, $G_{\Gamma}$ the pro-$p$ RAAG associated to $\Gamma$ and $\chi : G_{\Gamma} \to \mathbb{Z}_p$ with $\chi(v) \not= 0 $ for every $v \in V(\Gamma)$. Then $\Delta(\Gamma(\chi)) = \Gamma(\chi) = \Gamma$ is $0$-acyclic but not $1$-acyclic over $\mathbb{F}_p$. Thus $Ker(\chi)$ is finitely generated but not finitely presented ( as a pro-$p$ group).

\section{Classifying coabelian pro-$p$ subgroups of pro-$p$ RAAGs depending on  their homological type $FP_n$ : an approach via abstract groups }

\begin{lemma} \label{exact2} Let $Q^{dis} = \mathbb{Z}^k$, $Q = \mathbb{Z}_p^k$ and $V$ is a finite $\mathbb{F}_p[ Q^{dis}]$-module. Then $M = V \otimes_{\mathbb{F}_p[ Q^{dis}]} \mathbb{F}_p[[Q]]$ is finite.
\end{lemma}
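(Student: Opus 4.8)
The plan is to identify the two group algebras explicitly and then exploit the fact that finiteness of $V$ forces the translation action to have finite order. Write $\mathbb{F}_p[Q^{dis}] = \mathbb{F}_p[t_1^{\pm 1}, \ldots, t_k^{\pm 1}]$, the Laurent polynomial ring on generators $t_1, \ldots, t_k$, and identify $\mathbb{F}_p[[Q]] = \mathbb{F}_p[[x_1, \ldots, x_k]]$ via $t_i \mapsto 1 + x_i$; the structural map $\mathbb{F}_p[Q^{dis}] \to \mathbb{F}_p[[Q]]$ induced by $Q^{dis} \hookrightarrow Q$ is precisely this substitution (note $1 + x_i$ is a unit, so $t_i^{-1} \mapsto (1 + x_i)^{-1}$ is well defined). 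Since $V$ is finite it is a finite dimensional $\mathbb{F}_p$-vector space, and the action map $\mathbb{F}_p[Q^{dis}] \to \mathrm{End}_{\mathbb{F}_p}(V)$ carries each unit $t_i$ into the finite group $\mathrm{GL}(V)$. Hence every $t_i$ acts with finite order, and I would fix a single $N$ (for instance $N = |\mathrm{GL}(V)|$) with $t_i^{N}$ acting as the identity on $V$ for all $i$.

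Consequently the elements $t_i^{N} - 1$ annihilate $V$, so in $M = V \otimes_{\mathbb{F}_p[Q^{dis}]} \mathbb{F}_p[[Q]]$ the balanced relation gives
$$v \otimes (t_i^{N}-1)s = \big(v(t_i^{N}-1)\big) \otimes s = 0 \qquad (v \in V,\ s \in \mathbb{F}_p[[Q]]).$$
Therefore $M$ is a module over the quotient ring
$$\bar S = \mathbb{F}_p[[Q]] \big/ \big( (t_1^{N}-1, \ldots, t_k^{N}-1)\,\mathbb{F}_p[[Q]] \big).$$
Moreover $V$ is finitely generated over $\mathbb{F}_p$, hence over $\mathbb{F}_p[Q^{dis}]$, so the finitely many $v_j \otimes 1$ generate $M$ over $\mathbb{F}_p[[Q]]$ and thus over $\bar S$. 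It then suffices to prove that $\bar S$ is finite, since a finitely generated module over a finite ring is finite.

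To establish finiteness of $\bar S$ I would compute the generating ideal inside $\mathbb{F}_p[[x_1, \ldots, x_k]]$. Writing $N = p^{a}m$ with $\gcd(m,p) = 1$ and using the Frobenius identity $(1 + x_i)^{p^a} = 1 + x_i^{p^a}$ in characteristic $p$, one obtains
$$t_i^{N} - 1 = (1 + x_i^{p^a})^m - 1 = x_i^{p^a}\, w(x_i^{p^a}),$$
where $w(y) = m + \binom{m}{2}y + \cdots$ has constant term $m \neq 0$ in $\mathbb{F}_p$ and is therefore a unit in $\mathbb{F}_p[[x_1, \ldots, x_k]]$. Hence the ideal $(t_1^{N}-1, \ldots, t_k^{N}-1)\,\mathbb{F}_p[[Q]]$ equals $(x_1^{p^a}, \ldots, x_k^{p^a})$, so $\bar S = \mathbb{F}_p[[x_1, \ldots, x_k]]/(x_1^{p^a}, \ldots, x_k^{p^a})$ has the finite $\mathbb{F}_p$-basis of monomials $x_1^{e_1}\cdots x_k^{e_k}$ with $0 \le e_i < p^a$, whence $\dim_{\mathbb{F}_p}\bar S = p^{ak} < \infty$. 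Thus $\bar S$ is finite and so is $M$. The one step that demands care is this last factorization: I must verify that each $t_i^{N}-1$ splits as a variable power times a unit, so that the generated ideal is exactly the monomial ideal $(x_i^{p^a})$ making $\bar S$ Artinian of finite length. The finite order of the $t_i$ on $V$ is precisely what licenses replacing the completed algebra $\mathbb{F}_p[[Q]]$ by this finite quotient.
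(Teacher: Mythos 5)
Your proof is correct, and it takes a genuinely different route from the paper's. The paper first reduces to the case of a cyclic module $V = \mathbb{F}_p[Q^{dis}]/I$ via a filtration, then picks an arbitrary nonzero polynomial $f_i \in I \cap \mathbb{F}_p[t_i]$ in each variable (in coordinates where $t_i$ is a group generator minus $1$), observes that $M$ is a quotient of $\mathbb{F}_p[[t_1,\ldots,t_k]]/(f_1,\ldots,f_k)$, and proves finiteness of that quotient by writing the power series ring as an inverse limit of truncations $\mathbb{F}_p[t_1,\ldots,t_k]/(t_1,\ldots,t_k)^m$ and bounding the dimensions uniformly in $m$. You instead exploit the group structure: since $V$ is finite, each group generator acts through the finite group $\mathrm{GL}(V)$, so a single exponent $N$ makes all $t_i^N - 1$ annihilate $V$; then the explicit factorization $t_i^N - 1 = x_i^{p^a}\cdot(\text{unit})$ (Frobenius plus the unit constant term $m$) identifies the relevant quotient of $\mathbb{F}_p[[x_1,\ldots,x_k]]$ as the visibly finite ring $\mathbb{F}_p[[x_1,\ldots,x_k]]/(x_1^{p^a},\ldots,x_k^{p^a})$ of dimension $p^{ak}$, over which $M$ is finitely generated. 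What your approach buys: it bypasses both the reduction to cyclic modules (finite generation of $V$ over $\mathbb{F}_p$ suffices) and the inverse-limit dimension count, replacing them with a concrete computation and an explicit bound. What the paper's approach buys: it only requires the existence of \emph{some} nonzero annihilating polynomial in each variable rather than one of the special form $t_i^N - 1$, so it is the more generic commutative-algebra formulation, and its inverse-limit technique for controlling quotients of $\mathbb{F}_p[[t_1,\ldots,t_k]]$ is reusable in contexts where no finite-order unit action is available. Both arguments, of course, use the finiteness of $V$ in an essential way.
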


\begin{proof}
Since $V$ has a filtration of $\mathbb{F}_p[ Q^{dis}]$-submodules with quotients that are cyclic $\mathbb{F}_p[ Q^{dis}]$-modules, we can assume that $V$ is a cyclic $\mathbb{F}_p[ Q^{dis}]$-module i.e. $V = \mathbb{F}_p[ Q^{dis}]/ I$ for some ideal $I$ of $\mathbb{F}_p[ Q^{dis}]$.

Note that $\mathbb{F}_p[[Q]] = \mathbb{F}_p[[t_1, \ldots, t_k]]$ and $\mathbb{F}_p[ Q^{dis}] = \mathbb{F}_p[t_1, \ldots, t_k, 1/ ( t_1 + 1), \ldots, 1/ (t_k + 1)]$.
Since $V$ is finite there is $0 \not= f_i \in I \cap \mathbb{F}_p[t_i]$, $t_i + 1 $ does not divide $f_i$ for $ 1 \leq i \leq k$. Then $M$ is a quotient of $M_1 = \mathbb{F}_p[[t_1, \ldots, t_k]]/ (\sum_{1 \leq i \leq k} f_i \mathbb{F}_p[[t_1, \ldots, t_k]])$.

Finally since
$\mathbb{F}_p[[t_1, \ldots, t_k]]$ is the inverse limit of $\mathbb{F}_p[t_1, \ldots, t_k]/ (t_1, \ldots, t_k)^m$ we conclude that $M_1$ is the inverse limit of  $\mathbb{F}_p[t_1, \ldots, t_k]/ ((t_1, \ldots, t_k)^m, f_1, \ldots , f_k)$.
But $\dim_{\mathbb{F}_p} \mathbb{F}_p[t_1, \ldots, t_k]/ ((t_1, \ldots, t_k)^m, f_1, \ldots , f_k)$ $ \leq$ $ s : = \dim_{\mathbb{F}_p} \mathbb{F}_p[t_1, \ldots, t_k]/ (f_1, \ldots , f_k)
$ and $s$ does not depend on $m$. Hence $dim_{\mathbb{F}_p} M_1 \leq s$.
\end{proof}

\begin{theorem}  \label{thm-link} Let $\Gamma$ be a finite simplicial graph and $G = G_{\Gamma}$ be the corresponding pro-$p$ RAAG. Let  $\chi : G \to \mathbb{Z}_p$ be an epimorphism of pro-$p$ groups and  $N = Ker(\chi)$.
Then $N$ is of type $FP_n$ if and only if $lk_{\Delta(\Gamma(\chi))} (S)$ is $(n-1- |S|)$-acyclic over $\mathbb{F}_p$ for every clique $S$ from $\Gamma \setminus \Gamma(\chi)$ with $ n-1- |S| \geq 0$. For $S = \emptyset$ this
translates to the flag complex $\Delta ( \Gamma(\chi))$ is $(n-1)$-acyclic over $\mathbb{F}_p$.
\end{theorem}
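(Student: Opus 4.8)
The plan is to reduce the general case to the unit case already settled in Theorem \ref{mild-cond} by comparing $H_\ast(N,\mathbb{F}_p)$ with the homology of a discrete kernel and transferring finiteness through Lemma \ref{exact2}. First I would make $\chi$ integer-valued and realise $N$ as the closure of a discrete subgroup. For each vertex $v$ with $\chi(v)\neq 0$ write $\chi(v)=p^{k_v}u_v$ with $u_v\in\mathbb{Z}_p^{*}$, and let $\varphi\in\Aut(G)$ be determined by $\varphi(v)=v^{u_v}$ (and $\varphi(v)=v$ if $\chi(v)=0$); this preserves the defining commutation relations, hence is an automorphism. Then $\psi:=\chi\circ\varphi^{-1}$ has $\psi(v)\in\{0\}\cup\{p^{k_v}\}\subset\mathbb{Z}$. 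Since $FP_n$ is preserved by $\Aut(G)$ and $\Gamma(\psi)=\Gamma(\chi)$, it suffices to prove the theorem for $\psi$; thus I may assume $\chi$ is integer-valued (this is also precisely the assertion that $N$ is weakly discretely embedded). Writing $G^{dis}$ for the discrete RAAG and $N^{dis}=Ker(\chi|_{G^{dis}})$, one checks $G^{dis}\cap\overline{N^{dis}}=N^{dis}$, while $G/\overline{N^{dis}}$ is an abelian procyclic pro-$p$ group surjecting onto $\mathbb{Z}_p$; hence $\overline{N^{dis}}=Ker(\chi)=N$ and $G^{dis}/N^{dis}\cong\mathbb{Z}$.

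Next I would compare the two homologies through base change. The complex $\widetilde{\mathcal{F}}_{\Gamma}=\mathcal{F}_{\Gamma}\widehat{\otimes}_{\mathbb{F}_p[[N]]}\mathbb{F}_p$ from the proof of Theorem \ref{mild-cond} computes $H_i(N,\mathbb{F}_p)$, with terms $\bigoplus_{\Sigma}c_{\Sigma}\mathbb{F}_p[[G/N]]$ and differential built from the elements $vN-1$. Running the proof of Lemma \ref{exact} over the discrete group ring $\mathbb{F}_p[G^{dis}]$ gives an exact complex $\mathcal{F}^{dis}_{\Gamma}$, and $\mathcal{D}:=\mathbb{F}_p[G^{dis}/N^{dis}]\otimes_{\mathbb{F}_p[G^{dis}]}\mathcal{F}^{dis}_{\Gamma}$ computes $H_i(N^{dis},\mathbb{F}_p)$ with terms $\bigoplus_{\Sigma}c_{\Sigma}\mathbb{F}_p[\mathbb{Z}]$ and the same differential formula. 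Under the inclusion $\mathbb{F}_p[\mathbb{Z}]=\mathbb{F}_p[t^{\pm 1}]\hookrightarrow\mathbb{F}_p[[\mathbb{Z}_p]]=\mathbb{F}_p[[s]]$, $t\mapsto 1+s$, the two differentials agree (both route $c_\Sigma$ through the appropriate $(t-1)^{p^{k_v}}$), so $\widetilde{\mathcal{F}}_{\Gamma}=\mathcal{D}\otimes_{\mathbb{F}_p[\mathbb{Z}]}\mathbb{F}_p[[\mathbb{Z}_p]]$ term by term. Now $\mathbb{F}_p[t^{\pm 1}]$ is a principal ideal domain and $\mathbb{F}_p[[s]]$ is a domain containing it, hence a flat $\mathbb{F}_p[t^{\pm 1}]$-module; as the terms are finitely generated and free, the completed and ordinary tensor products coincide and flat base change commutes with homology, giving $H_i(N,\mathbb{F}_p)\cong H_i(N^{dis},\mathbb{F}_p)\otimes_{\mathbb{F}_p[\mathbb{Z}]}\mathbb{F}_p[[\mathbb{Z}_p]]$.

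From this identification I would transfer finiteness in both directions. Since $G^{dis}$ is of type $FP_\infty$, each $H_i(N^{dis},\mathbb{F}_p)$ is a finitely generated $\mathbb{F}_p[\mathbb{Z}]$-module; if it is finite then Lemma \ref{exact2} (with $k=1$, $Q^{dis}=\mathbb{Z}$, $Q=\mathbb{Z}_p$) makes $H_i(N,\mathbb{F}_p)$ finite, whereas if it is infinite it has positive free rank over the PID $\mathbb{F}_p[t^{\pm 1}]$ and its base change to $\mathbb{F}_p[[s]]$ is again infinite. Hence $H_i(N,\mathbb{F}_p)$ is finite if and only if $H_i(N^{dis},\mathbb{F}_p)$ is, so $N$ is of type $FP_n$ if and only if $H_i(N^{dis},\mathbb{F}_p)$ is finite for all $i\le n$. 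By the discrete criterion of Bux--Gonzalez \cite{B-G} (see also Papadima--Suciu \cite{P-S}) the latter holds precisely when $lk_{\Delta(\Gamma(\chi))}(S)$ is $(n-1-|S|)$-acyclic over $\mathbb{F}_p$ for every clique $S$ of $\Gamma\setminus\Gamma(\chi)$ with $n-1-|S|\ge 0$; since $\varphi$ does not change $\Gamma(\chi)$, this is exactly the asserted condition.

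The step I expect to be delicate is the comparison of complexes: one must verify that the completed pro-$p$ differential is genuinely the scalar extension of the discrete one, i.e. that $vN-1$ matches $(t-1)^{p^{k_v}}$ consistently on both sides of $\mathbb{F}_p[\mathbb{Z}]\hookrightarrow\mathbb{F}_p[[\mathbb{Z}_p]]$, and that on the finitely generated free terms the completed tensor product agrees with the ordinary one so that flatness applies and commutes with homology. Granting this, the two implications follow cleanly from Lemma \ref{exact2} and from the known discrete theorem.
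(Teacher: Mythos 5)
Your proposal is correct and follows essentially the same route as the paper's own proof: reduce to $p$-power values of $\chi$ by the automorphism $v\mapsto v^{u_v}$, identify $\widetilde{\mathcal{F}}_{\Gamma}$ with $\widetilde{\mathcal{F}}_{\Gamma}^{dis}\otimes_{\mathbb{F}_p[Q^{dis}]}\mathbb{F}_p[[Q]]$, transfer finiteness forward via Lemma \ref{exact2} and backward via the structure of finitely generated modules over the PID $\mathbb{F}_p[t^{\pm 1}]$, and invoke the discrete Bux--Gonzalez/Papadima--Suciu criterion over $\mathbb{F}_p$. The only differences are expository: you make explicit some points the paper leaves implicit, namely the flatness of $\mathbb{F}_p[[s]]$ over $\mathbb{F}_p[t^{\pm 1}]$ (torsion-freeness over a PID) and the matching of the differentials $vN-1\leftrightarrow(t-1)^{p^{k_v}}$ under $t\mapsto 1+s$.
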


\begin{proof}
 Consider the automorphism $\varphi : G \to G$ given by $\varphi(v_i) = v_i^{k_i}$ if $\chi(v_i) \not= 0$, $\chi(v_i) = k_i p^j$, $k_i \in \mathbb{Z}_p \setminus p \mathbb{Z}_p$, $j \geq 0$ and $\varphi(v_i) = v_i$ if $\chi(v_i) = 0$.
 Thus  $\chi = \chi_0 \circ \varphi$ , where $\chi_0(V(\Gamma)) \subseteq \{ 0,1 , p, p^2, \ldots, p^m, \ldots \}$ and  $\varphi( Ker (\chi)) = Ker(\chi_0)$.
 So $Ker (\chi)$ and $Ker (\chi_0)$ have the same homological type  $FP_n$. Note that $\Gamma (\chi) = \Gamma(\chi_0)$, hence $lk_{\Delta(\Gamma(\chi))} ( S)  = lk_{\Delta(\Gamma(\chi_0))} (S) $ for a clique $S \subset \Gamma \setminus \Gamma(\chi)$. Thus we can assume from now on that $\chi = \chi_0$, thus $\chi(V(\Gamma)) = \chi_0(V(\Gamma)) \subseteq \mathbb{Z}$.

Let $G^{dis}$ be the discrete RAAG associated to the graph $\Gamma$. Let $\chi^{dis} : G^{dis} \to \mathbb{Z}$ be the epimorphism whose restriction to $V(\Gamma)$ coincides with $\chi$.

1) Assume that $lk_{\Delta(\Gamma(\chi))} (S)$ is $(n-1- |S|)$-acyclic over $\mathbb{F}_p$ for every clique $S$ from $\Gamma \setminus \Gamma(\chi)$.

By \cite[Thm. 9]{B-G}  $lk_{\Delta(\Gamma(\chi))} (S)$ is $(n-1- |S|)$-acyclic over $\mathbb{Z}$ for every clique $S$ from $\Gamma \setminus \Gamma(\chi)$ if and only if $N^{dis} = Ker (\chi^{dis})$ is of type $FP_n$ over $\mathbb{Z}$. The same works with $\mathbb{Z}$ substituted with $\mathbb{F}_p$. Hence $N^{dis}$ is of type $FP_n$ over $\mathbb{F}_p$ and so $H_i(N^{dis}, \mathbb{F}_p)$ is finite for all $ i \leq n$.

Consider the complex of  finitely generated $\mathbb{F}_p[G^{dis}]$-modules
$$\mathcal{F}_{\Gamma}^{dis} : \ldots \to P_n^{dis} \to P_{n-1}^{dis} \to \ldots \to P_0^{dis} \to \mathbb{F}_p \to 0
$$
where $$P_n^{dis} = \oplus_{ \Sigma \subset V(\Gamma) \hbox{ clique } , |\Sigma| = n} c_{\Sigma} \mathbb{F}_p[G^{dis}],$$
the direct sum is over all cliques $\Sigma$, including the empty set and $c_{\Sigma} \mathbb{F}_p[G^{dis}]$ is free cyclic $\mathbb{F}_p[G^{dis}]$-module.
For $\Sigma = ( v_1, \ldots, v_n )$ the diferential is given by
$$\partial_n (c_{\Sigma}) = \sum_{1 \leq i \leq n} (-1)^{i-1} c_{\Sigma \setminus \{ v_i \} } (v_i - 1).$$
Note that $\mathcal{F}_{\Gamma}^{dis}$ is a free resolution of $\mathbb{F}_p$ as $\mathbb{F}_p[G^{dis}]$-module.

Consider the complex $\widetilde{\mathcal{F}}_{\Gamma}^{dis}  = \mathcal{F}_{\Gamma}^{dis} \otimes_{\mathbb{F}_p[N^{dis}]} \mathbb{F}_p$. Then
$$H_i(N^{dis}, \mathbb{F}_p) \simeq H_i(\widetilde{\mathcal{F}}_{\Gamma}^{dis} ) \hbox{ is finite for } i \leq n.$$
Set $Q = G/N \simeq \mathbb{Z}_p$ and $Q^{dis} = G^{dis}/ N^{dis} \simeq \mathbb{Z}$.
Note that
$$\widetilde{\mathcal{F}}_{\Gamma} \simeq \widetilde{\mathcal{F}}_{\Gamma}^{dis} \otimes_{\mathbb{F}_p[ Q^{dis}]} \mathbb{F}_p[[Q]]$$

Since $- \otimes_{\mathbb{F}_p[ Q^{dis}]} \mathbb{F}_p[[Q]]$ is an exact functor, by Lemma \ref{exact2}
$$H_i(N, \mathbb{F}_p) \simeq H_i (\widetilde{\mathcal{F}}_{\Gamma}) \simeq H_i(\widetilde{\mathcal{F}}_{\Gamma}^{dis} \otimes_{\mathbb{F}_p[ Q^{dis}]} \mathbb{F}_p[[Q]]) \simeq  H_i(\widetilde{\mathcal{F}}_{\Gamma}^{dis}) \otimes_{\mathbb{F}_p[ Q^{dis}]} \mathbb{F}_p[[Q]] \hbox{ is finite for } i\leq n,$$
 because $H_i(\widetilde{\mathcal{F}}_{\Gamma}^{dis} ) $ is finite for $i \leq n$. Thus $N$ is a pro-$p$ group of type $FP_n$.

 2) Suppose now that $N$ is a pro-$p$ group of type $FP_n$. Then as above
 $$H_i(N^{dis}, \mathbb{F}_p) \otimes_{\mathbb{F}_p[ Q^{dis}]} \mathbb{F}_p[[Q]] \simeq H_i(\widetilde{\mathcal{F}}_{\Gamma}^{dis}) \otimes_{\mathbb{F}_p[ Q^{dis}]} \mathbb{F}_p[[Q]] \simeq H_i(N, \mathbb{F}_p)   \hbox{ is finite for } i\leq n.$$

 Note that since $G^{dis} $ is of type $FP_n$ we have that $H_i(N^{dis}, \mathbb{F}_p)$ is finitely generated $R$-module, where $R : = {\mathbb{F}_p[ Q^{dis}]} $ is a principal ideal domain. Hence $H_i(N^{dis}, \mathbb{F}_p)$ is a direct sum of cyclic $R$-modules (see \cite[Theorem 16.1.6]{curtis-rainer}). We claim that $H_i(N^{dis}, \mathbb{F}_p)$ is finite. Otherwise $R$ is a direct summand of $H_i(N^{dis}, \mathbb{F}_p)$, hence $\mathbb{F}_p[[Q]]$ is a direct summand of the finite abelian group $H_i(\widetilde{\mathcal{F}}_{\Gamma}^{dis}) \otimes_{\mathbb{F}_p[ Q^{dis}]} \mathbb{F}_p[[Q]]$, a contradiction.

 Since  $H_i(N^{dis}, \mathbb{F}_p)$ is finite for $i \leq n$ by \cite[Cor. 5.2]{P-S} $lk_{\Delta(\Gamma(\chi))} (S)$ is $(n-1- |S|)$-acyclic over $\mathbb{F}_p$ for every clique $S$ from $\Gamma \setminus \Gamma(\chi)$, as required.
\end{proof}

\iffalse{Remark.
  We note that  \cite[Cor. 5.2]{P-S} used at the end of the above proof  is a corollary of \cite[Thm. 5.1]{P-S} that itself uses \cite[Cor. 3.7]{P-S}, that follows from the commutative algebra result \cite[Prop. 4.3]{A-A-H}. It is possible that attempts to get direct proof of the above result without passing through known results for discrete groups should be linked with a pro-$p$ version of \cite[Prop. 4.3]{A-A-H}.}\fi

  Let $N$ be a normal pro-$p$ subgroup of the right angled pro-$p$ group $G = G_{\Gamma}$ such that $G/ N$ is abelian. Recall that $G$ is the pro-$p$ completion of $G^{dis}$ and that we call $N$ discretely embedded in $G$ if there is a subgroup $N^{dis}$ of $G^{dis}$ such that $G^{dis}/ N^{dis}$ is abelian and $N$ is the closure of $N^{dis}$ in $G$. We call $N$ weakly discretely embedded in $G$ if there is an automorphism $\varphi$ of $G$ such that $\varphi(N)$ is discretely embedded in $G$.

We note that the group of automorphisms $Aut(G)$ is much bigger than the group of automorphisms $Aut(G^{dis})$.  For example every homomorphism of pro-$p$ groups $\varphi : G \to G$ given by $\varphi(v) = v^{z_v}$ for $v \in V(\Gamma)$  for some $z_v \in \mathbb{Z}_p \setminus p \mathbb{Z}_p$, is an automorphism. A set of generators of $Aut(G^{dis})$ is described in \cite{L}.

\begin{theorem} \label{thm-final} Let $\Gamma$ be a finite simplicial graph and $G = G_{\Gamma}$ be the corresponding pro-$p$ RAAG. Let $N$ be a normal pro-$p$ subgroup of $G$ such that $G/ N$ is infinite abelian, and assume further that  $N$ is weakly discretely embedded in $G$. Then $N$ is of type $FP_n$ if and only if $N_0$ is  of  type $FP_n$ for  every $N_0 \subseteq G$ containing $N$ with $G/ N_0 \simeq \mathbb{Z}_p$.
\end{theorem}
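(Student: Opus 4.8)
The plan is to transport the problem to the discrete RAAG $G^{dis}$, where the analogous statement is available, and to move finiteness of homology between the two worlds only in the direction where this is legitimate. Since $N$ is weakly discretely embedded I first fix $\varphi\in\Aut(G)$ with $\varphi(N)$ discretely embedded; as $\varphi$ maps the family $\{N_0\supseteq N : G/N_0\simeq\mathbb Z_p\}$ bijectively onto the corresponding family for $\varphi(N)$ and preserves the type $FP_n$ of every member, I may replace $N$ by $\varphi(N)$ and assume $N=\overline{N^{dis}}$ with $G^{dis}/N^{dis}$ abelian. Put $Q=G/N$ and $Q^{dis}=G^{dis}/N^{dis}$; then $Q^{dis}$ is finitely generated abelian and $Q=\widehat{(Q^{dis})}_p$. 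Exactly as in the proof of Theorem \ref{thm-link}, tensoring the free resolution $\mathcal F_\Gamma$ of Lemma \ref{exact} identifies $H_i(N,\mathbb F_p)\simeq H_i(N^{dis},\mathbb F_p)\otimes_{\mathbb F_p[Q^{dis}]}\mathbb F_p[[Q]]$, the functor $-\otimes_{\mathbb F_p[Q^{dis}]}\mathbb F_p[[Q]]$ being exact. By Lemma \ref{exact2} (extended in the evident way to a finitely generated abelian $Q^{dis}$), finiteness of $H_i(N^{dis},\mathbb F_p)$ forces finiteness of $H_i(N,\mathbb F_p)$; this \emph{discrete}$\Rightarrow$\emph{pro-$p$} implication is the only transport of finiteness I will use.

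For the forward implication, assume $N$ is $FP_n$ and take any $N_0$ with $N\subseteq N_0$ and $G/N_0\simeq\mathbb Z_p$. Then $N\triangleleft N_0$ and $N_0/N$ is a closed subgroup of the finitely generated abelian pro-$p$ group $Q$, hence is itself finitely generated abelian pro-$p$ and so of type $FP_\infty$. As $FP_n$ is closed under extensions of pro-$p$ groups, the short exact sequence $1\to N\to N_0\to N_0/N\to 1$ shows that $N_0$ is $FP_n$.

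The substance is the converse. Assume every $N_0$ as above is $FP_n$; I claim $N^{dis}$ is $FP_n$ over $\mathbb F_p$, which by the transport above gives that $H_i(N,\mathbb F_p)$ is finite for $i\le n$ and hence that $N$ is $FP_n$. Let $\chi^{dis}:G^{dis}\to\mathbb Z$ be an arbitrary epimorphism with $N^{dis}\subseteq Ker(\chi^{dis})$. Composing $Q^{dis}\to\mathbb Z\hookrightarrow\mathbb Z_p$ with $Q=\widehat{(Q^{dis})}_p$ produces an epimorphism $\chi:G\to\mathbb Z_p$ with $N\subseteq Ker(\chi)=:N_0$ and $\Gamma(\chi)=\Gamma(\chi^{dis})$, since $\chi$ and $\chi^{dis}$ vanish on exactly the same vertices. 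By hypothesis $N_0$ is $FP_n$, so Theorem \ref{thm-link} shows that $lk_{\Delta(\Gamma(\chi))}(S)$ is $(n-1-|S|)$-acyclic over $\mathbb F_p$ for every clique $S\subseteq\Gamma\setminus\Gamma(\chi)$. Because these links depend only on the common support $\Gamma(\chi)=\Gamma(\chi^{dis})$, the discrete criterion of Bux--Gonzalez \cite[Thm.~9]{B-G} (with $\mathbb Z$ replaced by $\mathbb F_p$, as in Theorem \ref{thm-link}) gives that $Ker(\chi^{dis})$ is $FP_n$ over $\mathbb F_p$. Thus every rank-one discrete quotient kernel containing $N^{dis}$ is $FP_n$ over $\mathbb F_p$, and the corresponding statement for coabelian subgroups of discrete RAAGs \cite{K-MP} then yields that $N^{dis}$ is $FP_n$ over $\mathbb F_p$.

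The point that forces this indirect route, and which I expect to be the main obstacle, is that $\mathbb F_p[[Q]]$ is the completion of $\mathbb F_p[Q^{dis}]$ at its augmentation ideal, so $H_i(N,\mathbb F_p)$ only detects $H_i(N^{dis},\mathbb F_p)$ near the identity character; consequently, once $\operatorname{rank} Q\ge 2$, there is no transport of finiteness in the direction \emph{pro-$p$}$\Rightarrow$\emph{discrete}. The argument sidesteps this by deducing discrete $FP_n$ from the discrete link data (hence from the known discrete theorem) rather than from the pro-$p$ homology of $N$, invoking the pro-$p$ groups $N_0$ only through Theorem \ref{thm-link}, which already converts arbitrary $\mathbb Z_p$-valued characters into the combinatorial invariant $\Gamma(\chi)$. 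This is exactly what lets me compare the single higher-rank subgroup $N$ with the whole family of its rank-one quotients without having to rectify all characters by one automorphism simultaneously.
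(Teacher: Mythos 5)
Your proof is correct and follows essentially the same route as the paper's: reduce to the discretely embedded case via $\varphi$, dispose of the forward direction by extension-closure of $FP_n$, and for the converse run each discrete rank-one character (equivalently, each closure of a discrete corank-one kernel, which is what the paper uses) through Theorem \ref{thm-link}, the $\mathbb{F}_p$-version of Bux--Gonzalez, and the $\mathbb{F}_p$-version of the Kochloukova--Mart\'inez-P\'erez theorem, then transport finiteness of $H_i(N^{dis},\mathbb{F}_p)$ back to $H_i(N,\mathbb{F}_p)$ via the exact functor $-\otimes_{\mathbb{F}_p[Q^{dis}]}\mathbb{F}_p[[Q]]$ and Lemma \ref{exact2}. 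The only cosmetic difference is at the final step: the paper arranges $Q\simeq\mathbb{Z}_p^k$ by replacing $N$ with a commensurable subgroup containing the commutator subgroup, whereas you extend Lemma \ref{exact2} to finitely generated abelian $Q^{dis}$.
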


\begin{proof}
Suppose that for every $N \subseteq N_0 \subseteq G$ with $G/ N_0 \simeq \mathbb{Z}_p$ we have that $N_0$ is $FP_n$. We aim to show that $N$ is a pro-$p$ group of type $FP_n$.
If $\varphi \in Aut(G)$ we have that $\varphi(N) \simeq N$ have the same homological type $FP_n$, thus we can assume that $N$ is discretely embedded in $G$.

 1) Let $N^{dis} \leq N_0^{dis} \leq G^{dis}$ be discrete groups such that $G^{dis}/N_0^{dis} \simeq \mathbb{Z}$, $G^{dis}/ N^{dis}$ is abelian and $N$ is the closure of $N^{dis}$ in $G$. Let $N_0$ be the closure of $N_0^{dis}$ in $G$, thus $G/ N_0 \simeq \mathbb{Z}_p$.  Assume any such $N_0$ is a pro-$p$ group of type $FP_n$. By Theorem \ref{thm-link} $lk_{\Delta(\Gamma(\chi))} (S)$ is $(n-1- |S|)$-acyclic over $\mathbb{F}_p$ for every clique $S$ from $\Gamma \setminus \Gamma(\chi)$, where $\chi : G \to G/ N_0 = \mathbb{Z}_p$ is the canonical epimorphism.

Let $\chi_0: G^{dis} \to G^{dis}/ N_0^{dis} = \mathbb{Z}$ be the canonical epimorphism. Note that $\Gamma(\chi) = \Gamma(\chi_0)$. By \cite[Thm. 9]{B-G} $lk_{\Delta(\Gamma(\chi_0))} (S)$ is $(n-1- |S|)$-acyclic over $\mathbb{Z}$ for every clique $S$ from $\Gamma \setminus \Gamma(\chi_0)$ if and only if $N_0^{dis}$ is of type $FP_n$ ( over $\mathbb{Z}$). The same proof shows that $lk_{\Delta(\Gamma(\chi_0))} (S)$ is $(n-1- |S|)$-acyclic over $\mathbb{F}_p$ for every clique $S$ from $\Gamma \setminus \Gamma(\chi_0)$ if and only if $N_0^{dis}$ is of type $FP_n$ over $\mathbb{F}_p$. Thus  $N_0^{dis}$ is of type $FP_n$  over $\mathbb{F}_p$.

It was shown in \cite[Thm 3.5]{K-MP} that if each $N_0^{dis}$ with $G^{dis}/ N_0^{dis} \simeq \mathbb{Z}$  is of type $FP_n$, then $N^{dis}$ is of type $FP_n$. The version over $\mathbb{F}_p$ of this property is that if each $N_0^{dis}$ with $G^{dis}/ N_0^{dis} \simeq \mathbb{Z}$   is of type $FP_n$ over $\mathbb{F}_p$, then $N^{dis}$ is of type $FP_n$ over $\mathbb{F}_p$ and the proof is the same as in \cite{K-MP} with the obvious change of basic ring $\mathbb{Z}$ to $\mathbb{F}_p$. Thus $H_i(N^{dis}, \mathbb{F}_p)$ is finite for all $i \leq n$.

Let $Q = G/ N$. By substituting $N$ with a commensurable pro-$p$ subgroup of $G$ that contains the commutator pro-$p$ subgroup we can assume that $Q \simeq \mathbb{Z}_p^k$.
Note that $- \otimes_{\mathbb{F}_p[ Q^{dis}]} \mathbb{F}_p[[Q]]$ is an exact functor, so using the notation from the proof of Theorem \ref{thm-link} we have
$$H_i(N, \mathbb{F}_p) \simeq H_i (\widetilde{\mathcal{F}}_{\Gamma}) \simeq H_i(\widetilde{\mathcal{F}}_{\Gamma}^{dis} \otimes_{\mathbb{F}_p[ Q^{dis}]} \mathbb{F}_p[[Q]]) \simeq  H_i(\widetilde{\mathcal{F}}_{\Gamma}^{dis}) \otimes_{\mathbb{F}_p[ Q^{dis}]} \mathbb{F}_p[[Q]].$$
 Recall that $H_i(\widetilde{\mathcal{F}}_{\Gamma}^{dis} ) \simeq  H_i(N^{dis}, \mathbb{F}_p) $ is finite for $i \leq n$. Then by Lemma \ref{exact2} $H_i(N, \mathbb{F}_p)$ is finite for $i \leq n$, hence  $N$ is a pro-$p$ group of type $FP_n$.

 2) The converse, (i.e. if $N$ is of type $FP_n$ then each $N_0$ as described in Theorem \ref{thm-final} is $FP_n$), is obvious since $N_0/ N$ is a finitely generated abelian pro-$p$ group, hence is of type $FP_{\infty}$, in particular is $FP_n$.

\end{proof}

\end{document}